 \numberwithin{equation}{subsection}
\theoremstyle{plain}
\newtheorem{thm-intro}{Theorem}
\newtheorem{theorem}{Theorem}[section]
\newtheorem*{thm*}{Theorem}
\newtheorem{lemma}[theorem]{Lemma}
\newtheorem{prop}[theorem]{Proposition}
\newtheorem{coro}[theorem]{Corollary}
\theoremstyle{definition}
\newtheorem{defi}[theorem]{Definition}
\newtheorem{notation}[theorem]{Notation}
\newtheorem{exem}[theorem]{Example}
\newtheorem{rema}[theorem]{Remark}
\newtheorem{construction}[theorem]{Construction}
\theoremstyle{remark}
\numberwithin{equation}{section}
\newcommand{\GL}{\mathrm{GL}}
\newcommand{\GLn}{\mathrm{GL}_n}
\newcommand{\Gk}{G_K}
\newcommand{\Gal}{\mathrm{Gal}}
\newcommand{\anGLn}{\mathbf{GL}_n^\an}
\newcommand{\fib}{\mathrm{fib}}
\newcommand{\dLocSys}{\mathbf R \mathbf{LocSys}_{\ell, n}}
\newcommand{\abdLocSys}{\mathbf R \mathbf{LocSys}_{\ell, n, \Gamma}}
\newcommand{\LocSys}{\mathbf{LocSys}_{\ell, n}}
\newcommand{\abLocSys}{\mathbf{LocSys}_{\ell, n, \Gamma}}
\newcommand{\LocSysfr}{\LocSys^{\mathrm{framed}}}
\newcommand{\abLocSysfr}{\abLocSys^{\mathrm{framed}}}
\newcommand{\sm}{\mathrm{sm}}
\newcommand{\tr}{\mathrm{tr}}
\newcommand{\Map}{\mathrm{Map}}
\newcommand{\Hom}{\mathrm{Hom}}
\newcommand{\ad}{\mathrm{ad}}
\newcommand{\Ok}{k^\circ}
\DeclareMathOperator{\Spf}{Spf}
\DeclareMathOperator{\Sp}{Sp}
\newcommand{\st}{\mathrm{st}}
\newcommand{\trun}{\mathrm{t}}
\newcommand{\cEnd}{\cE \mathrm{nd}}
\newcommand{\End}{\mathrm{End}}
\newcommand{\Mat}{\mathrm{Mat}}
\newcommand{\unr}{\mathrm{unr}}
\newcommand{\Frac}{\mathrm{Frac}}
\newcommand{\tamepi}{\pi_1^{\mathrm{t}}}
\newcommand{\wildpi}{\pi_1^w}
\newcommand{\tame}{\mathrm{tame}}
\newcommand{\dSt}{\mathrm{dSt}}
\newcommand{\fc}{\mathrm{fc}}
\newcommand{\Sh}{\mathrm{Sh}}
\newcommand{\Ad}{\mathrm{Ad}}
\newcommand{\Def}{\mathbf{\mathrm{Def}}}
\newcommand{\cont}{\mathrm{cont}}
\newcommand{\Q}{\mathbb{Q}}
\newcommand{\Ql}{\bQ_\ell}
\newcommand{\smCAlg}{\mathcal{C}\mathrm{Alg}^\sm}
\newcommand{\CAlg}{\mathcal{C}\mathrm{Alg}}
\newcommand{\adCAlg}{\cC \mathrm{Alg}^{\ad}_{\Ok}}
\newcommand{\AnRing}{\mathrm{AnRing}_k}
\newcommand{\Cat}{\mathcal{C}\mathrm{at}_\infty}
\newcommand{\Coh}{\mathrm{Coh}^+}
\newcommand{\op}{\mathrm{op}}
\newcommand{\Psh}{\mathrm{PShv}}
\newcommand{\Shv}{\mathrm{Shv}}
\newcommand{\ind}{\mathrm{Ind}}
\newcommand{\pro}{\mathrm{Pro}}
\newcommand{\Perf}{\mathrm{Perf}}
\newcommand{\infcat}{$\infty$-category\xspace}
\newcommand{\infcats}{$\infty$-categories\xspace}
\newcommand{\PerfSys}{\mathbf{PerfSys}_{\ell}}
\newcommand{\rigCat}{\mathcal{C}\mathrm{at}^{\mathrm{st}, \omega, \otimes}_{\infty}}
\newcommand{\et}{\text{\'et}}
\newcommand{\emphet}{\emph{\text{\'et}}}
\newcommand{\Sym}{\mathrm{Sym}}
\newcommand{\dR}{\mathrm{dR}}
\newcommand{\rig}{\mathrm{rig}}
\newcommand{\alg}{\mathrm{alg}}
\newcommand{\Fun}{\mathrm{Fun}}
\newcommand{\Mod}{\mathrm{Mod}}
\newcommand{\St}{\mathrm{St}}
\newcommand{\Afd}{\mathrm{Afd}}
\newcommand{\Afdl}{\mathrm{Afd}_{\Q_\ell}}
\newcommand{\An}{\mathrm{An}}
\newcommand{\Anl}{\mathrm{An}_{\Q_\ell}}
\newcommand{\dAnl}{\mathrm{dAn}_{\Q_\ell}}
\newcommand{\dAfd}{\mathrm{dAfd}}
\newcommand{\dAfdl}{\mathrm{dAfd}_{\Q_\ell}}
\newcommand{\dAn}{\mathrm{dAn}}
\newcommand{\dfSch}{\mathrm{dfSch}}
\newcommand{\an}{\mathrm{an}}
\DeclareMathOperator*{\colim}{colim}
\newcommand{\rmB}{\mathrm{B}}
\newcommand{\rmC}{\mathrm C}
\newcommand{\rmH}{\mathrm H}
\newcommand{\rmP}{\mathrm P}
\newcommand{\rmT}{\mathrm T}
\newcommand{\bA}{\mathbb A}
\newcommand{\bE}{\mathbb E}
\newcommand{\bF}{\mathbb F}
\newcommand{\bL}{\mathbb L}
\newcommand{\bQ}{\mathbb Q}
\newcommand{\bT}{\mathbb T}
\newcommand{\bZ}{\mathbb Z}
\newcommand{\fm}{\mathfrak{m}}
\newcommand{\fl}{\mathfrak l}
\newcommand{\cC}{\mathcal C}
\newcommand{\cE}{\mathcal E}
\newcommand{\cF}{\mathcal{F}}
\newcommand{\cM}{\mathcal M}
\newcommand{\cO}{\mathcal{O}}
\newcommand{\cT}{\mathcal{T}}
\newcommand{\cX}{\mathcal X}
\newcommand{\cY}{\mathcal Y}
\newcommand{\cZ}{\mathcal Z}
\newcommand{\cS}{\mathcal S}
\newcommand{\hbZ}{\widehat{\bZ}}
\newcommand{\overK}{\overline{K}}
\author{Jorge Ant\'onio}
\address{Jorge Ant\'onio,  IMT Toulouse, 118 Rue de Narbonne  31400 Toulouse}
\email{jorge\_tiago.ferrera\_antonio@math.univ-tls.fr}
\begin{document}

\title{Moduli of $\ell$-adic pro-\'etale local systems for smooth non-proper schemes}

\date{\today}

\maketitle

\renewcommand\labelitemi{\textbullet}

\markright{MODULI OF $\ell$-ADIC REPRESENTATIONS}

\begin{abstract}
Let $X$ be a smooth scheme over an algebraically closed field. When $X$ is proper, it was proved
in \cite{me1} that the moduli of $\ell$-adic continuous representations of $\pi_1^\et(X)$, $\LocSys(X)$, is representable by a (derived) $\Ql$-analytic space.
However, in the non-proper case one cannot expect that the results of \cite{me1} hold mutatis mutandis. Instead, assuming $\ell$ is invertible in $X$, one has to bound the ramification at infinity of those considered continuous representations.

The main
goal of the current text is to give a
proof of such representability statements in the open case.
We also extend the representability results of \cite{me1}. More specifically, assuming $X$ is assumed to be proper, we show that $\LocSys(X)$ admits a canonical
shifted symplectic form and we give some applications of such existence result.
\end{abstract}

\setcounter{tocdepth}{1}
\tableofcontents

\section{Introduction}
\subsection{The goal of this paper} Let $X$ be a smooth scheme over an algebraically closed field $k$ of positive characteristic $p>0$.
Without the properness assumption
the \'etale homotopy group $\pi_1^\et(X)$ fits in a short exact sequence of profinite groups
    \begin{equation} \label{ss_intro}
        1 \to \pi_1^w(X) \to \pi_1^\et(X) \to \pi_1^\tame(X) \to 1,
    \end{equation}
where $\pi_1^w(X)$ and $\pi_1^\tame(X)$ denote the \emph{wild} and \emph{tame} fundamental groups of $X$, respectively. One can prove that the profinite group $\pi_1^\tame(X)$ is
topologically of profinite type. However, the profinite group $\pi_1^\et(X)$ is, in general, a profinite pro-$p$ group satisfying no finiteness condition or whatsoever. Needless to say,
the \'etale fundamental group $\pi_1^\et(X)$ will in general not admit a finite number of topological generators.
Consider $X = \bA^1_k$, the affine line. Its
\'etale and wild fundamental groups agree, but they are not topologically of finite type.

For this reason, the main results of \cref{me1} do not apply for a general smooth scheme $X$. In particular, one cannot expect that the moduli of $\ell$-adic continuous representations of
$X$, $\LocSys(X)$, is representable by a $\Q_\ell$-analytic stack. 
The purpose of the current text, is to study certain moduli substacks of $\LocSys$ parametrizing continuous representations
    \[
        \rho \colon \pi_1^\et (X) \to \GLn(A), \quad A \in \Afdl
    \]
such that the restriction $\rho_{\vert \wildpi(X)}$ factors through a finite quotient $p_\Gamma \colon \wildpi(X) \to \Gamma$. Denote $\abLocSys(X)$ such stack. Our main result is the
following:

\begin{theorem} \label{intro:main}
The moduli stack $\abLocSys(X) \colon \Afdl \to \cS$ can be promoted naturally to a derived moduli stack 
    \[
        \abdLocSys(X) \colon \dAfdl \to \cS
    \]
which is representable by a derived
$\Q_\ell$-analytic stack. Given $\rho \in \abdLocSys(X)$, the analytic cotangent complex $\bL_{\abdLocSys, \rho} \in \Mod_{\Ql}$ is naturally equivalent to
    \[
        \bL^\an_{\abdLocSys (X), \rho } \simeq \rmC^*_\emphet \big( X, \Ad ( \rho) \big)^\vee[-1]
    \]
in the derived \infcat $\Mod_{\Ql}.$
\end{theorem}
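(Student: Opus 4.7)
The plan is to deduce the theorem from the representability result of \cite{me1}, applied not to $\pi_1^\et(X)$ itself but to a suitable bounded-ramification quotient of it. If necessary, I replace $\ker(p_\Gamma)$ by a $\pi_1^\et(X)$-invariant finite-index subgroup of $\wildpi(X)$ contained in it, so that $N_\Gamma := \ker(p_\Gamma)$ is normal in $\pi_1^\et(X)$. Setting $\pi_1^\Gamma(X) := \pi_1^\et(X)/N_\Gamma$, one obtains a short exact sequence of profinite groups
$$1 \to \Gamma \to \pi_1^\Gamma(X) \to \pi_1^\tame(X) \to 1,$$
in which $\Gamma$ is finite and, as recalled in the introduction, $\pi_1^\tame(X)$ is topologically finitely generated for $X$ smooth; hence so is $\pi_1^\Gamma(X)$. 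By construction, a continuous representation $\rho \colon \pi_1^\et(X) \to \GLn(A)$ satisfies the ramification condition defining $\abLocSys(X)$ if and only if it factors through $\pi_1^\Gamma(X)$, producing a canonical identification between $\abLocSys(X)$ and the non-derived moduli functor of continuous $\GLn$-valued representations of $\pi_1^\Gamma(X)$.

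With this reduction in place, the main representability theorem of \cite{me1} applied to the topologically finitely generated profinite group $\pi_1^\Gamma(X)$ produces the desired derived enhancement $\abdLocSys(X)$, together with its representability by a derived $\Q_\ell$-analytic stack and the cotangent complex formula
$$\bL^\an_{\abdLocSys(X),\rho} \simeq \rmC^*_\cont\big(\pi_1^\Gamma(X), \Ad(\rho)\big)^\vee[-1].$$

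The remaining step, which I expect to be the principal technical obstacle, is to upgrade the right-hand side to $\rmC^*_\emphet(X, \Ad(\rho))^\vee[-1]$. First, the Hochschild--Serre spectral sequence for $N_\Gamma \triangleleft \pi_1^\et(X)$ collapses to a quasi-isomorphism $\rmC^*_\cont(\pi_1^\Gamma(X), \Ad(\rho)) \simeq \rmC^*_\cont(\pi_1^\et(X), \Ad(\rho))$: indeed $N_\Gamma \subset \wildpi(X)$ is pro-$p$ while $\ell$ is invertible on $X$, so the continuous $\Q_\ell$-cohomology of $N_\Gamma$ with coefficients in $\Ad(\rho)$ vanishes in positive degrees. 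Second, one invokes the comparison between continuous cochains on $\pi_1^\et(X)$ and (pro-)étale cohomology of $X$ valued in the $\ell$-adic local system attached to $\rho$. Promoting this comparison to a natural equivalence of cochain complexes in $\Mod_{\Q_\ell}$, functorial in derived families over $A \in \Afdl$, is the main work of this step; it is here that the pro-étale formalism becomes essential, both to define $\Ad(\rho)$ as a pro-étale sheaf in families and to get a well-behaved cochain model admitting a comparison with $\pi_1^\et(X)$-cochains at the derived level.
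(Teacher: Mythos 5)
Your reduction of the underlying ordinary moduli problem to the topologically finitely generated quotient $\pi_1^{\Gamma}(X) = \pi_1^{\et}(X)/N_\Gamma$ is sound, and it is essentially the same device the paper uses for \cref{main1} (there one picks a free profinite group on finitely many topological generators surjecting compatibly onto $\Gamma$ and onto $\tamepi(X)$, and realizes $\abLocSysfr(X)$ as a closed subfunctor of a representable one). The gap is in the derived enhancement. Applying the main theorem of \cite{me1} to the profinite \emph{group} $\pi_1^{\Gamma}(X)$ produces a derived stack whose tangent complex is continuous group cohomology $C^*_{\cont}\big(\pi_1^{\Gamma}(X), \Ad(\rho)\big)[1]$, and your last step asserts that this can be identified with $C^*_{\et}\big(X, \Ad(\rho)\big)[1]$. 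No such comparison holds for general smooth $X$: étale cochains compute the cohomology of the full étale homotopy type $\Sh^{\et}(X)$, whereas continuous cochains on $\pi_1^{\et}(X)$ (or on any quotient of it) only see the $1$-truncation $\rmB\pi_1^{\et}(X)$. Already for $X=\mathbb{P}^1$ one has $\pi_1^{\et}(X)=1$ but $H^2_{\et}(X,\Q_\ell)\neq 0$, and the same discrepancy occurs for smooth non-proper $X$ of dimension $\geq 2$ that are not $K(\pi,1)$'s. Since the entire content of the derived structure is that the cotangent fiber is $C^*_{\et}(X,\Ad(\rho))^\vee[-1]$ rather than its group-cohomological analogue, this is not a comparison you can postpone: the object you construct is a genuinely different derived stack with the same $0$-truncation.

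The paper avoids this by defining $\abdLocSys(X)$ directly as the fiber product $\dLocSys(X)\times_{\dLocSys^w(X)}\dLocSys(\rmB\Gamma)$ of functors on $\dAfdl$ built from the full shape $\Sh^{\et}(X)$: the underlying ordinary stack only depends on $\tau_{\leq 1}\Sh^{\et}(X)$ because $\rmB\cEnd(Z)$ is $1$-truncated for discrete $Z$, but for derived $Z$ the higher homotopy of the shape contributes, which is exactly what makes the tangent complex come out as étale cochains. The tangent complex of the fiber product is then computed by a homotopy-fixed-point argument along the fiber sequence $\cY\to\rmB\wildpi(X)\to\rmB\Gamma$, showing that the inclusion $\abdLocSys(X)\hookrightarrow\dLocSys(X)$ is an equivalence on tangent complexes; representability is deduced from \cite[Theorem 7.1]{porta_rep} after passing to the finite étale cover $Y_U\to X$ attached to $\ker(q)$, over which every $\rho$ in the substack becomes tame, so that existence of a global cotangent complex reduces to cohomological perfectness and compactness of $\Sh^{\tame}(Y_U)$. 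Two smaller points: your Hochschild--Serre step needs the higher continuous $\Q_\ell$-cohomology of $N_\Gamma$ to vanish, and the paper only records that the wild fundamental group is pro-$p$ for curves, so in higher dimension this requires justification; and replacing $\ker(p_\Gamma)$ by a smaller normal subgroup changes the moduli functor (it enlarges $\Gamma$), so you would still need to cut out the original locus afterwards.
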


In particular, \cref{intro:main} implies that the inclusion morphism of stacks
    \[
        j_\Gamma \colon \abdLocSys(X) \hookrightarrow \dLocSys(X)
    \]
induces an equivalence on contangent complexes, in particular it is an \'etale morphism.
We can thus regard $\abdLocSys(X)$ as an admissible substack of $\dLocSys$, in the
sense of $\Q_\ell$-analytic geometry.

The knowledge of the analytic cotangent complex allow us to have a better understanding of the local geometry of $\dLocSys$. In particular, given a continuous representation
    \[
        \overline{\rho} \colon \pi_1^\et(X) \to \GLn(\overline{\bF}_\ell)
    \]
one might ask how $\overline{\rho}$ can be deformed into a continuous representation $\rho \colon \pi_1^\et (X) \to \GLn(\overline{Q}_\ell)$. This amounts to understand the formal moduli
problem $\Def_{\overline{\rho}} \colon \CAlg_{\bF_\ell}^\sm \to \cS$ given on objects by the formula
    \[
        A \in \smCAlg \mapsto \Map_{\cont} \left(\Sh_\et(X), \rmB \GLn(A) \right) \times_{ \Map_{\cont} \left(\Sh_\et(X), \rmB \GLn(\bF_\ell) \right)} \{ \rho \}
        \in \cS,
    \]
where $\Sh_\et(X) \in \pro \big( \cS^{\fc} \big)$ denotes the \'etale homotopy type of $X$. Given $\overline{\rho}$ as above, the functor $\Def_{\overline{\rho}}$ was first
considered by Mazur in \cite{mazurDG}, for Galois representations, in the discrete case. More recently, Galatius and Venkatesh studied its derived structure in detail, see
\cite{galatius_dg} for more details.

One can prove, using similar methods to those in \cite{me1} that the tangent complex
of $\Def_{\overline{\rho}}$ is naturally equivalent to
    \[
        \bT_{\Def_{\overline{\rho}} } \simeq \rmC^*_\et \big(X, \Ad(\rho) \big)[1],
    \]
in the derived \infcat $\Mod_{\overline{\bF}_\ell}$. We can consider
$\Def_{\overline{\rho}}$ as a derived $W(\bF_\ell)$-adic scheme which is locally
admissible, in the sense of \cite{me2}. Therefore, one can consider its rigidification
    \[
        \Def^\rig_{\overline{\rho}} \in \dAnl.
    \]
By construction, we have a canonical inclusion functor
    \[
         j_{\overline{\rho}} \colon \Def^\rig_{\overline{\rho}} \to \LocSys(X).
    \]  
By comparing both analytic cotangent complexes, one arrives at the following result:

\begin{prop} \label{intro:disj_u}
The morphism of derived stacks
    \[
       j_{\overline{\rho}} \colon \Def^\rig_{\overline{\rho}} \to \LocSys(X)
    \]
exhibits $\Def^\rig_{\overline{\rho}}$ as an admissible open substack of
$\LocSys(X)$.
\end{prop}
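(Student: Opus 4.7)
The plan is to verify that $j_{\overline{\rho}}$ is both étale and a monomorphism in the \infcat of derived $\Q_\ell$-analytic stacks, as these two properties together express precisely the assertion that $\Def^\rig_{\overline{\rho}}$ embeds into $\LocSys(X)$ as an admissible open substack, in the sense developed in \cite{me1, me2}. As a preliminary, one recalls that $\Def_{\overline{\rho}}$ is representable by a derived $W(\bF_\ell)$-adic scheme which is locally admissible, so that the rigidification $\Def^\rig_{\overline{\rho}} \in \dAnl$ is a well-defined derived $\Q_\ell$-analytic stack, and the morphism $j_{\overline{\rho}}$ is the tautological one sending an $A$-valued deformation of $\overline{\rho}$ to the underlying continuous representation viewed as a point of $\LocSys(X)$.

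The core step is the comparison of analytic cotangent complexes at an arbitrary rigid point $\rho \in \Def^\rig_{\overline{\rho}}$. By \cref{intro:main}, together with the observation that any such $\rho$ automatically lands in the admissible substack $\abdLocSys(X)$ for a suitable finite quotient $\Gamma$ of $\wildpi(X)$ (because the image under $\overline{\rho}$ of the pro-$p$ group $\wildpi(X)$ is constrained to a finite subgroup of $\GLn(\overline{\bF}_\ell)$ and lifts thereof remain bounded), one obtains
\[
    \bL^\an_{\LocSys(X), j_{\overline{\rho}}(\rho)} \simeq \rmC^*_\et\big(X, \Ad(\rho)\big)^\vee[-1].
\]
On the deformation side, the tangent complex formula $\bT_{\Def_{\overline{\rho}}} \simeq \rmC^*_\et(X, \Ad(\overline{\rho}))[1]$ recalled in the excerpt, spread out over general $A$-points via the rigidification machinery of \cite{me2}, identifies the analytic cotangent complex of $\Def^\rig_{\overline{\rho}}$ at $\rho$ with the same expression $\rmC^*_\et(X, \Ad(\rho))^\vee[-1]$. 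Functoriality of the $\Ad$-cohomology construction ensures that the map on cotangent complexes induced by $j_{\overline{\rho}}$ is the natural identification, showing that $j_{\overline{\rho}}$ is étale.

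The remaining point, and the one I expect to be the main obstacle, is to upgrade étaleness to an admissible open immersion by verifying that $j_{\overline{\rho}}$ is a monomorphism of derived stacks. At the level of $A$-points, two deformations of $\overline{\rho}$ mapping to the same point of $\LocSys(X)$ ought to coincide by the very definition of $\Def_{\overline{\rho}}$ as a homotopy fiber over $\overline{\rho}$, since the mod-$\ell$ reduction is determined by the underlying continuous representation together with the integral structure inherited from the rigid-analytic point. The subtlety is to promote this essentially set-theoretic statement to a fully faithful comparison on the anima of $A$-points and all higher coherences. I would handle this by describing the diagonal $\Def^\rig_{\overline{\rho}} \to \Def^\rig_{\overline{\rho}} \times_{\LocSys(X)} \Def^\rig_{\overline{\rho}}$, computing its analytic cotangent complex via the same $\Ad$-cohomology description, and checking that it vanishes; combined with the étaleness already established, this forces the diagonal to be an equivalence.

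Combining étaleness with the monomorphism property, $j_{\overline{\rho}}$ is a $(-1)$-truncated étale morphism of derived $\Q_\ell$-analytic stacks, which by the general theory of \cite{me1, me2} is precisely the notion of an admissible open immersion in this setting. This establishes the proposition.
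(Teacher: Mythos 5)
Your proposal follows essentially the same route as the paper: the paper's proof likewise observes that $j_{\overline{\rho}}$ induces an equivalence on tangent complexes (hence is \'etale) and then invokes \cref{moc} — the representability of the deformation functor by $\Spf A_{\overline{\rho}}$ and its universal property — to see that $\Def^\rig_{\overline{\rho}}$ is a substack, concluding that the map is locally an admissible subdomain inclusion. Your diagonal/cotangent-complex packaging of the monomorphism step is a reasonable substitute for the appeal to \cref{moc}, with the caveat that vanishing of the cotangent complex of the diagonal alone does not force it to be an equivalence — you still need the point-level identification of deformations that you sketch (and which is exactly what \cref{moc} supplies), so keep that part of the argument explicit.
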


\cref{intro:disj_u} implies, in particular, that $\LocSys(X)$ admits as an admissible analytic substack the disjoing union $\coprod_{\overline{\rho}} \Def^\rig_{\overline{\rho}}$, indexed
by the set of contininuous representations $\overline{\rho} \colon \pi_1^\et(X) \to \GLn(\overline{\Q}_\ell).$ Nonetheless, the moduli $\LocSys(X)$ admits more (analytic) points in general
than those contained in the disjoint union $\coprod_{\overline{\rho}} \Def^\rig_{\overline{\rho}}$. This situation renders difficult the study of trace formulas on $\LocSys(X)$ which was the
first motivation for the study of such moduli. Ideally, one would like to ''glue'' the connected components of $\LocSys(X)$ in order to have a better behaved global geometry. More 
specifically,
one would like to exhibit a moduli algebras or analytic stack $\cM_{\ell, n}(X)$ of finite type over $\Q_\ell$ such that the space closed points $\cM_{\ell, n}(X)(\overline{\Q}_\ell) \in 
\cS$ would correspond to
continuous $\ell$-adic representations of $\pi_1^\et(X)$. Moreover, one should expect such moduli stack to have a natural derived structure which would provided an understanding of
deformations
of $\ell$-continuous representations $\rho$.

Such principle has been largely successful for instance in the context of continuous $p$-adic representations of a Galois group of a local field of mixed characteristic $(0,p)$. Via $p$-adic
Hodge structure and a scheme-image construction provided in \cite{gee_sch_image}, the authors consider the moduli of Kisin modules which they prove to be an ind-algebraic stack admitting 
strata
given by algebraic stacks of Kisin modules of a fixed height. Unfortunately, the methods used in \cite{gee_sch_image}, namely the scheme-image construction, do not directly generalize to
the
derived setting. Recent unpublished work of M. Porta and V. Melani regarding formal loop stacks might provide an effective answer to this problem, which we pretend to explore in the near
future. However, to the best of the author's knowledge,
there is no other successful attempts outside the scope of $p$-adic Hodge theory.

We will also study the existence of a $2-2d$-shifted symplectic form on $\LocSys(X)$, where $d = \dim  X$. Even though $\LocSys(X)$ is not an instance of an analytic mapping stack it behaves
as such. We need to introduce the moduli stack $\PerfSys(X)$ which corresponds to the moduli of objects associated to the $\rigCat$-valued moduli stack given on objects
by the formula
    \[
        Z \in \dAfdl \mapsto \Fun_{\cE \Cat} \left( \vert X \vert_\et, \Perf \big( \Gamma(Z ) \big) \right)
    \]
where $\cE\Cat$ denotes the \infcat of (small) $\ind( \pro(\cS))$-enriched \infcats. We are then able to prove:

\begin{theorem} \label{intro:shifted}
The derived moduli stack $\PerfSys(X)$ admits a natural shifted symplectic form $\omega$. Explicitly, given $\rho \in \PerfSys(X)$
$\omega$ induces a non-degenerated pairing
    \[
        C^*_\emphet \big(X, \Ad(\rho) \big)[1] \otimes C^*_\emphet \big(X, \Ad(\rho ) \big)[1]
        \to \Ql [2-2d],
    \]
which agrees with Poincar\'e duality.
\end{theorem}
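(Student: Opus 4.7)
The plan is to adapt the AKSZ construction of Pantev--To\"en--Vaqui\'e--Vezzosi to the $\Q_\ell$-analytic setting, viewing $\PerfSys(X)$ as the mapping stack from the \'etale homotopy type $\vert X \vert_\et$ into the derived analytic moduli of perfect complexes $\Perf$. The ingredients are three: a $2$-shifted symplectic form on the target $\Perf$, an orientation of degree $2d$ on the source $\vert X \vert_\et$ coming from Poincar\'e duality, and a transgression (integration-over-the-source) construction producing the shifted symplectic form on the mapping stack. The $(2-2d)$-shift arises by combining the $2$-shift on $\Perf$ with the $-2d$-shift of the fundamental class of $X$.

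The first step is to endow $\Perf$ in $\dAnl$ with a canonical $2$-shifted symplectic form. At a point $E \in \Perf(A)$ the tangent complex is $\bT_{\Perf, E} \simeq \cEnd(E)[1]$ and the trace pairing
\[
    \cEnd(E) \otimes_A \cEnd(E) \xrightarrow{\mathrm{comp}} \cEnd(E) \xrightarrow{\tr} A
\]
defines a perfect pairing identifying $\cEnd(E)[1] \simeq \cEnd(E)^\vee[1]$, hence the associated $2$-form is non-degenerate. Closedness is verified exactly as in PTVV: the form is the image, under the map of cyclic realizations, of the universal Chern character, and the Chern character naturally lives in \emph{closed} negative cyclic homology. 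This formal argument transports to the $\Q_\ell$-analytic setting since all the relevant constructions (de Rham complex, $\mathcal{S}^1$-action, closedness) are formulated internally in the \infcat of derived $\Q_\ell$-analytic stacks.

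The second step is to construct the orientation. Since $X$ is smooth proper of dimension $d$ over an algebraically closed field of characteristic prime to $\ell$, \'etale Poincar\'e duality provides a trace map $\int_X \colon \rmC^*_\et(X, \Ql)[2d] \to \Ql$ which identifies $\vert X \vert_\et$, as an object of $\ind(\pro(\cS^\fc))$, with a $2d$-dimensional orientation in the sense needed for integration of cochains: for every perfect $\Ql$-module $V$ and every cohomology class on $\vert X \vert_\et$ valued in $V$, capping against the fundamental class yields the expected duality pairing. This is the only place where the geometry of $X$ (as opposed to purely homotopical data) enters.

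The third step is the transgression itself. The evaluation morphism
\[
    \mathrm{ev} \colon \vert X \vert_\et \times \PerfSys(X) \to \Perf
\]
pulls back the canonical $2$-form $\omega_{\Perf}$ to a class in $\rmC^*_\et\bigl(X, \bigwedge^2 \bL_{\PerfSys}\bigr)[2]$; integrating against $\int_X$ produces a closed $2$-form $\omega := \int_X \mathrm{ev}^* \omega_{\Perf}$ of degree $2 - 2d$ on $\PerfSys(X)$. Non-degeneracy is then local: at a point $\rho$ the cotangent complex is $\rmC^*_\et(X, \Ad(\rho))^\vee[-1]$ by the analogue of \cref{intro:main} for $\PerfSys$, so the pairing induced by $\omega$ identifies with the composition of the trace pairing on $\Ad(\rho)$ with $\int_X$, which is precisely Poincar\'e duality and is therefore non-degenerate.

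The main obstacle is the construction of the transgression and the verification of closedness in the $\Q_\ell$-analytic framework, where the target of the integration map is not an ordinary ring but lives in $\ind(\pro(\cS))$-enriched \infcats. Concretely, one must check that the $\mathcal{S}^1$-equivariant structures required to pass from ``form'' to ``closed form'' interact well with $\ell$-adic continuity and with the enriched mapping stack formalism underlying $\PerfSys(X)$; once this compatibility is established, the non-degeneracy at each point reduces cleanly to the classical statement of Poincar\'e duality for $\Ad(\rho)$.
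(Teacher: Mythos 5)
Your skeleton (a $2$-shifted form on the target, an orientation of degree $2d$ on the source from Poincar\'e duality, and a transgression) is the classical PTVV--AKSZ route, and your final identification of the pairing with the trace pairing followed by the fundamental class agrees with the paper. But the step you yourself flag as ``the main obstacle'' --- the transgression along an evaluation map --- is precisely the step that does not exist in the form you state it, and it is the reason the paper does not follow PTVV. The stack $\PerfSys(X)$ is \emph{not} a mapping stack $\underline{\Map}\big( \Sh^\et(X), \Perf \big)$ internal to $\dSt \big( \dAfdl, \tau_\et \big)$: the source $\Sh^\et(X)$ lives in $\pro \big( \cS^\fc \big)$, the functor is defined through the enriched functor categories $\Fun_{\cE \Cat}\big( \Sh^\et(X), \Perf(\Gamma(Z)) \big)$ with a continuity constraint, and the paper says so explicitly (``a moduli stack which cannot be presented as a usual mapping stack''). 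Hence there is no evaluation morphism $\mathrm{ev} \colon \Sh^\et(X) \times \PerfSys(X) \to \Perf$ of derived $\Q_\ell$-analytic stacks along which to pull back $\omega_{\Perf}$, and no integration over the source in the PTVV sense. The paper's substitute is To\"en's categorified formalism \cite{toen_ss}: one packages the data as the $\rigCat$-valued prestack $\Perf^\ad_\ell(X)$ with $\big( \Perf^\ad_\ell(X) \big)^{\simeq} \simeq \PerfSys(X)$ (\cref{lem:rigidity}), defines a pre-orientation $\theta \colon H\big( \Perf^\ad_\ell(X) \big) \to \cO[-2d]$ on the endomorphisms of the unit via $\Map(1,1) \simeq C^*_{\et}(X, \Gamma(Z))$ and the fundamental class (\cref{const:pair}), proves it is an orientation, and invokes \cite[Theorem 3.7]{toen_ss}. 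Your second step is essentially the same datum as the paper's $\theta$; your third step must either be replaced by To\"en's theorem or be supplied with an actual construction in the enriched setting, which your proposal names but does not give.

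A secondary gap: non-degeneracy over an arbitrary derived affinoid $Z$ does not ``reduce cleanly to the classical statement of Poincar\'e duality''. One needs at least the projection formula $C^*_{\et}(X, \Gamma(Z)) \simeq C^*_{\et}(X, \Q_\ell) \otimes_{\Q_\ell} \Gamma(Z)$ together with finiteness of $\ell$-adic cohomology; the paper in fact runs a reduction through formal models, residual representations over $\bF_\ell$-algebras, cohomological compactness and Lazard's theorem before it can quote duality. Some version of that reduction should appear in your argument.
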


By transport of strucure, the substack $\LocSys(X) \hookrightarrow \PerfSys(X)$ can be equipped with a natural shifted sympletic structure. By restricting further, we equip
the $\abLocSys(X)$ with a shifted symplectic form $\omega_\Gamma$.

\subsection{Summary}
Let us give a brief review of the contents of each section of the text.
Both \S 2.1 and \S 2.2 are devoted to review the main aspects of ramification theory for local fields and smooth varities in positive characteristic. Our exposition is classical and we do 
not pretend
to prove anything new in this context. In \S 2.3
we construct the (ordinary) \emph{moduli stack of continuous $\ell$-adic representations}. Our construction follows directly the methods applied in \cite{me1}. Given $q
\colon \wildpi(X) 
\to \Gamma$ a continuous group homomorphism whose target is finite we construct the moduli stack $\abLocSys(X)$ parametrizing $\ell$-adic continuous representations of $\pi_1^\et(X)$
such that $\rho_{\vert \wildpi(X)} $ factors through $\Gamma$.
We then show that $\abLocSys$ is representable
by a $\Q_\ell$-analytic stack (the analogue of an Artin stack in the context of $\Q_\ell$ analytic geometry).

In \S 3, we show that both the $\Q_\ell$-analytic stacks $\LocSys(X)$ and $\abLocSys(X)$ can be given natural derived structures and we compute their corresponding cotangent complexes. 
It follows then by \cite[Theorem 7.1]{porta_rep} that $\abLocSys(X)$ is representable by a derived $\Q_\ell$-analytic stack.

\S 4 is devoted to state and prove certain comparison results. We prove \cref{intro:disj_u} and relate this result to the moduli of pseudo-representations introduced in \cite{chenevier}.

Lastly, in \S 5 we study the existence of a shifted symplectic form on $\LocSys(X)$. We state and prove \cref{intro:shifted} and analysize some of its applications.

\subsection{Convention and Notations} Throught the text we will employ the following 
notations:

\begin{enumerate}
    \item $\Afdl$ and $\dAfdl$ denote the \infcats of ordinary $\Q_\ell$-affinoid spaces 
    and derived $\Ql$-affinoid spaces, respectively.;
    \item $\Anl$ and $\dAnl$ denote the \infcats of analytic $\Ql$-spaces and derived
    $\Ql$-analytic spaces, respectively;
    \item We shall denote $\cS$ the \infcat of spaces and $\ind(\pro(\cS)) \coloneqq \ind \big( \pro \big(\cS \big)
    \big)$ the \infcat of ind-pro-objects on $\cS$.
    \item $\Cat$ denotes the \infcat of small \infcats and $\cE \Cat$ the \infcat
    of $\ind(\pro(\cS))$-enriched \infcats.
    \item Given a continuous representation $\rho$, we shall denote $\Ad ( \rho) \coloneqq
    \rho \otimes \rho^\vee$ the corresponding adjoint representation;
    \item Given $Z \in \Afdl$ we sometimes denote $\Gamma(Z) \coloneqq \Gamma(Z)$ the derived
    $\Ql$-algebra of global sections of $Z$.
\end{enumerate} 

\subsection{Acknowledgements} I am grateful to Jean-Baptiste Tessier and Bertrand To\"en for many useful discussions and suggestions on the contents of the present text. I would also like to acknowledge le \emph{S\'eminaire Groupes R\'eductifs et formes
automorphes} for the invitation to expose many of my ideas about the subject.

\section{Setting the stage}

\subsection{Recall on the monodromy of (local) inertia} In this subsection we recall some well known facts on the monodromy of the local inertia, our exposition follows closely \cite[\S 1.3]{fontaine_ouyang}.

Let $K$ be a local field, $\cO_K$ its ring of integers and $k$ the residue field which we assume to be of characteristic $p>0$ different from $\ell$. Fix $\overline{K}$ an algebraic closure of $K$ and denote by $\Gk \coloneqq
 \Gal \left( \overK / K \right)$ its absolute Galois group.
 
 \begin{defi}
Given a finite Galois extension $L/K$ with Galois group $\Gal \left( L / K \right)$ we define its \emph{inertia group}, denoted $I_{L/K} $, as the subgroup of $\Gal \left( L / K \right)$
spanned by those elements of $\Gal \left( L /K \right)$ which act trivially on $\mathfrak l \coloneqq \cO_L / \fm_L$, where $\O_L $ denotes the ring of integers of $L$ and $ \mathfrak{m}_L$ the corresponding maximal ideal. 
\end{defi}

\begin{rema}
We can identify the inertia subgroup $ I_{L/K}$ of $\Gal( L / K )$ with the kernel of the surjective continuous group homomorphism $q \colon \Gal( L / K ) \to \Gal( l/ k)$. We have thus a short exact sequence of profinite groups
	\begin{equation} \label{inertia}
		1 \to I_{L/K} \to \Gal( L / K ) \to \Gal( l/ k) \to 1.
	\end{equation}
In particular,
we deduce that the inertia subgroup $I_{L / K }$ can be identified with a normal subgroup of $\Gal( L / K)$.
\end{rema}

\begin{rema}
Letting the field extension $L/K$ vary, we can assemble
together the short exact sequences displayed in \eqref{inertia} thus obtaining a short exact sequence of profinite groups
	\begin{equation} \label{absinert}
		1 \to I_K \to \Gk \to G_k \to 1,
	\end{equation}
where $G_k \coloneqq \Gal( \overline{k}/ k)$ where $\overline{k}$ denotes the algebraic closure of $k$ determined by $ \overline{K}$.
\end{rema}

\begin{defi}[Absolute inertia]
Define the \emph{(absolute) inertia group of $K$} as the inverse limit
	\[
		I_K := \lim_{L/K \text{ finite}} I_{L/ K},
	\]
which we canonically identify with a subgroup of $\Gk$.
\end{defi}

\begin{defi}[Wild inertia]
Let $L/ K$ be a field extension as above. We let $P_{L/K}$ denote the subgroup of $I_{L/K}$ which
consists of those elements of $I_{L/K}$ acting trivially on $\cO_L/ \fm^2_L$. We refer to $P_{L/K}$ as the \emph{wild inertia group} associated to $L/K$.
\end{defi}

\begin{defi}[Absolute wild inertia]
We define the absolute wild inertia group of $K$ as:
	\[
		P_K : = \lim_{L \text{ finite}} P_{L/K}.
	\] 
\end{defi}

\begin{rema}
We can identify the absolute wild inertia group $P_K $ with a normal subgroup of $I_K$.
\end{rema}

Consider the exact sequence
	\begin{equation} \label{wild_tame}
		1 \to P_K \to I_K \to I_K / P_K \to 1.
	\end{equation}
Thanks to \cite[Lemma 53.13.6]{stacks} it follows that the wild inertia group $P_K$ is a \emph{pro-$p$} group. When $K = \Q_p$ a theorem of Iwasawa implies that $P_K$ is not topologically of finite generation, even though
$G_K$ is so.
Nonetheless, the quotient $I_K / P_K$ is much more amenable:

\begin{prop}{\cite[Corollary 13]{bommel}} \label{tame_mon} Let $p \coloneqq \mathrm{char}(k)$ denote the residual characteristic of $K$.
The quotient $I_K / P_K$ is canonically isomorphic to $\hbZ'(1)$, where the latter denotes the profinite group $\prod_{q \neq p} \bZ_q(1)$. In particular, the quotient profinite group $I_K / P_K$ is topologically of finite generation.
\end{prop}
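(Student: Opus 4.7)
The plan is to identify $I_K/P_K$ with the Galois group of the tame closure $K^{\mathrm{tame}}$ of $K$ over the maximal unramified extension $K^{\mathrm{unr}}$, and then use Kummer theory over $K^{\mathrm{unr}}$ to produce the canonical isomorphism with $\prod_{q \neq p} \bZ_q(1)$. First I would recall that $P_K$ is precisely the absolute Galois group of $K^{\mathrm{tame}}$ (by definition of higher ramification in the lower numbering, this is the wild inertia), so that $I_K / P_K = \Gal(K^{\mathrm{tame}}/K^{\mathrm{unr}})$. A standard classification of tamely ramified extensions of a complete discretely valued field then identifies
\[
    K^{\mathrm{tame}} = K^{\mathrm{unr}}\bigl( \pi^{1/n} : n \geq 1, \gcd(n,p) = 1 \bigr),
\]
for any choice of uniformizer $\pi$ of $\cO_K$ (or equivalently of $\cO_{K^{\mathrm{unr}}}$).

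Second, for each integer $n$ with $\gcd(n,p) = 1$, Kummer theory gives the isomorphism
\[
    \Gal\bigl( K^{\mathrm{unr}}(\pi^{1/n}) / K^{\mathrm{unr}} \bigr) \xrightarrow{\ \sim\ } \mu_n(\overline{k}), \qquad \sigma \mapsto \sigma(\pi^{1/n})/\pi^{1/n},
\]
using that $K^{\mathrm{unr}}$ contains all $n$-th roots of unity coprime to $p$ (since $\overline{k}$ does, and $\cO_{K^{\mathrm{unr}}}^\times \twoheadrightarrow \overline{k}^\times$ is surjective with kernel of pro-$p$-torsion on the roots of unity side). Passing to the inverse limit over such $n$, and combining the transition maps given by $\mu_{mn} \to \mu_n$, one obtains a surjective continuous homomorphism
\[
    I_K/P_K \longrightarrow \varprojlim_{\gcd(n,p)=1} \mu_n(\overline{k}) = \prod_{q \neq p} \bZ_q(1),
\]
and injectivity follows because $\bigcup_n K^{\mathrm{unr}}(\pi^{1/n}) = K^{\mathrm{tame}}$.

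The main obstacle, and the reason the statement insists on \emph{canonical} isomorphism, is to verify that the resulting map does not depend on the choice of uniformizer $\pi$. For this I would argue that any two uniformizers differ by an element $u \in \cO_{K^{\mathrm{unr}}}^\times$, and that $u$ admits compatible $n$-th roots in $\cO_{K^{\mathrm{unr}}}$ for every $n$ coprime to $p$ (by Hensel's lemma applied to $X^n - u$, since its reduction has a simple root in $\overline{k}$). Consequently $\sigma(\pi^{1/n})/\pi^{1/n}$ is independent of the chosen uniformizer modulo the Galois action on such roots, which is trivial since those roots lie in $K^{\mathrm{unr}}$; hence the Kummer cocycle depends only on $\sigma$. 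Finally, the topological finite generation of $I_K/P_K$ is immediate once the isomorphism with $\prod_{q \neq p} \bZ_q(1)$ is established, since each factor $\bZ_q(1)$ is topologically cyclic and the product is a quotient of $\widehat{\bZ}$.
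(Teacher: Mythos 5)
Your argument is correct. The paper offers no proof of this proposition — it is quoted directly from the cited reference — and your construction is exactly the standard one behind that citation: identify $I_K/P_K$ with $\Gal(K^{\mathrm{tame}}/K^{\mathrm{unr}})$, apply Kummer theory to the extensions $K^{\mathrm{unr}}(\pi^{1/n})$ for $\gcd(n,p)=1$ to get the tame character $\sigma \mapsto (\sigma(\pi^{1/n})/\pi^{1/n})_n$ valued in $\varprojlim \mu_n = \prod_{q\neq p}\bZ_q(1)$, and check independence of the uniformizer via Hensel's lemma. All the steps you flag as needing verification (surjectivity at each finite level, injectivity from $K^{\mathrm{tame}}$ being generated by the $\pi^{1/n}$, and canonicity) are handled correctly.
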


Define $P_{K, \ell} $ to be the inverse image of $\prod_{q \neq \ell, p } \bZ_q$ in $I_K$. We have then a short exact sequence of profinite groups
	\[
		1 \to P_K \to P_{K, \ell} \to \prod_{q \neq \ell, p } \bZ_q \to 1.
	\]
Define similarly $G_{K, \ell} \coloneqq G_K / P_{K, \ell}$ the quotient of $G_K$ by $P_{K, \ell}$. We have a short exact sequence of profinite groups
	\begin{equation} \label{e1}
		1 \to P_{K, \ell} \to \Gk \to G_{K, \ell} \to 1.
	\end{equation}
Assembling together \eqref{wild_tame} and \cref{tame_mon} we obtain a short exact sequence
	\begin{equation} \label{e2}
		1 \to \bZ_\ell(1) \to G_{K, \ell} \to G_k \to 1.
	\end{equation}
	
\begin{rema}
As a consequence of both \eqref{e1} and \eqref{e2} the quotient $G_{K, \ell}$ is topologically of finite type.
\end{rema}

Suppose we are now given a continuous representation
	\[
		\rho \colon \Gk \to \GLn (E_\ell),
	\]
where $E_\ell$ denotes a finite field extension of $\Ql$. Up to conjugation, we might assume that $\rho$ preserves a lattice of $E_\ell$. More explicitly, up to conjugation we have a commutative diagram of the form
	\[
	\begin{tikzcd}
		G_K \ar{r}{\widetilde{\rho}} \ar{dr}[swap]{\rho} & \GLn(\bZ_\ell) \ar{d} \\
						& \GLn(\Q_\ell)
	\end{tikzcd}.
	\]
	
Therefore $\widetilde{\rho} \left( G_K \right) $ is a closed subgroup of $\GLn(\bZ_\ell)$. Consider the short exact sequence
	\[
		1 \to N_1 \to \GLn(\bZ_\ell) \to \GLn(\bF_\ell) \to 1,
	\]
where $N_1$ denotes the group of $\GLn(\bZ_p)$ formed by congruent to $\mathrm{Id}$ mod $\ell$ matrices. In particular, $N_1$ is a profinite pro-$\ell$ group.
By construction, every finite quotient of $P_{K, \ell}$ is of order prime to $\ell$. One then has necessarily
	\[
		\rho \left( P_{K, \ell}  \right) \cap N_1 = \{1 \}.
	\] 
As a consequence, the group $\rho( P_{K, \ell})$ injects into the finite group $\GLn(\bF_\ell)$ under $\rho$.
Which in turn implies that the (absolute) wild inertia group $P_K$ itself acts on $\GLn(\Q_\ell)$ via a finite quotient. 

\subsection{Geometric \'etale fundamental groups}
Let $X$ be a geometrically connected smooth scheme over an algebraically closed field $k$ of positive
characteristic.
Fix once and for all a geometric point $\iota_x \colon \overline{x} \to X$ and consider the corresponding \'etale fundamental group $\pi_1^{\et}(X) \coloneqq \pi_1^{\et}(X, \overline{x})$, a profinite group. If we assume that $X$ is moreover proper one has the following
classical result:

\begin{theorem}{\cite[Expos\'e 10, Thm 2.9]{grothendieckSGA1}} \label{proper_case}
Let $X$ be a smooth and proper scheme over an algebraically closed field.
Then its \'etale fundamental group $\pi^\emph{\et}_1 \left( \overline{X} \right)$ is topologically of finite type.
\end{theorem}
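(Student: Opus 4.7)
The plan is to reformulate topological finite generation of $\pi_1^\et(X)$ as the statement that for each integer $n$, the set of continuous homomorphisms $\pi_1^\et(X) \to S$ with $|S| \leq n$ is finite, or equivalently via the Galois correspondence, that $X$ admits finitely many isomorphism classes of connected finite \'etale covers of each bounded degree. I would split the argument into the prime-to-$p$ and pro-$p$ parts, which are handled by genuinely different techniques.

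For the prime-to-$p$ quotient $\pi_1^\et(X)^{(p')}$, the strategy is to spread out and specialize. Writing $X$ as the special fiber of a smooth proper morphism $\cX \to \Spec R$ over some finitely generated $\bZ$-subalgebra $R$ of $k$ and choosing a geometric generic point $\bar \eta$ of $\Spec R$, Grothendieck's specialization theorem for the fundamental group (SGA 1, Expos\'e X) supplies a continuous surjection
    \[
        \pi_1^\et(\cX_{\bar \eta})^{(p')} \twoheadrightarrow \pi_1^\et(X)^{(p')}.
    \]
Since the generic fiber is defined over a subfield of $\bC$, Artin's comparison theorem identifies $\pi_1^\et(\cX_{\bar \eta})$ with the profinite completion of $\pi_1^{\mathrm{top}}(\cX_{\bar \eta}(\bC))$, which is topologically finitely generated because the complex analytic space is a compact manifold, hence homotopy-equivalent to a finite CW complex; topological finite generation passes to continuous quotients.

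For the pro-$p$ quotient $\pi_1^\et(X)^{(p)}$, I would invoke the standard criterion (Burnside basis theorem for pro-$p$ groups) that a pro-$p$ group $G$ is topologically finitely generated if and only if $H^1(G, \bF_p) \simeq \Hom_\cont(G, \bF_p)$ is finite-dimensional. Applied to $G = \pi_1^\et(X)^{(p)}$, this cohomology group identifies with $H^1_\et(X, \bF_p)$, which is finite-dimensional by the finiteness of \'etale cohomology with torsion coefficients on a proper scheme over an algebraically closed field.

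The hardest step will be to combine these two partial results into topological finite generation of the full $\pi_1^\et(X)$, since this is not a prime-by-prime property of profinite groups. I would bound $\Hom_\cont(\pi_1^\et(X), S)$ for each finite group $S$ as follows: filtering $S$ by its maximal normal $p$-subgroup $N \triangleleft S$ yields a short exact sequence $1 \to N \to S \to S/N \to 1$ with $N$ a $p$-group and $|S/N|$ coprime to $p$. Each homomorphism $\pi_1^\et(X) \to S$ projects to a homomorphism $\pi_1^\et(X) \to S/N$ (finitely many by the first step), and the set of lifts to $\pi_1^\et(X) \to S$ of a fixed quotient is a torsor under $Z^1(\pi_1^\et(X), N)$, with $N$ regarded as a $\pi_1^\et(X)$-module via the lift and the conjugation action of $S$ on $N$. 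Its cardinality is bounded by $|N| \cdot |H^1(\pi_1^\et(X), N)| = |N| \cdot |H^1_\et(X, \underline{N})|$, finite again by \'etale cohomology finiteness applied to the locally constant sheaf $\underline{N}$. Summing over the finitely many quotients $S/N$ yields the desired finiteness and completes the argument.
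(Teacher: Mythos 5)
Your opening reformulation is where the argument breaks. For a profinite group $G$, having only finitely many continuous homomorphisms to each finite group of bounded order (equivalently, finitely many open subgroups of each index --- the property usually called \emph{smallness}) is strictly weaker than topological finite generation: finite generation implies smallness, but not conversely. For instance, $G = \prod_{n \geq 5} A_n^{f(n)}$ with $f(n)$ growing sufficiently fast is small (every closed normal subgroup of such a product of simple groups is a sub-product, and only finitely many factors have order below any given bound), yet it requires unboundedly many topological generators, and its maximal pro-$p$ and maximal prime-to-$p$ quotients are both trivial. So even granting your steps in full, the conclusion of the final counting step is smallness of $\pi_1^{\et}(X)$, not finite generation, and no combination of smallness with finite generation of the pro-$p$ and prime-to-$p$ quotients yields the theorem. (Within that counting step there is also the smaller issue that the set of lifts along $1 \to N \to S \to S/N \to 1$ is a torsor under $Z^1(-,N)$ only when $N$ is abelian; for a general $p$-group $N$ one must filter by a central series, which is fixable.)

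The prime-to-$p$ step also fails as written: since $k$ has characteristic $p$, any finitely generated $\bZ$-subalgebra $R \subseteq k$ is an $\bF_p$-algebra, so the geometric generic fibre of your spreading-out still lives in characteristic $p$ and Artin's comparison theorem does not apply to it. To reach characteristic $0$ one must genuinely \emph{lift} $X$, which is obstructed for general smooth proper varieties of dimension $\geq 2$. This is exactly what the classical proof behind the citation \cite{grothendieckSGA1} (the present paper only quotes the result) is designed to circumvent: a Lefschetz/Bertini d\'evissage using the surjectivity of $\pi_1^{\et}$ of a smooth hyperplane section onto $\pi_1^{\et}(X)$ reduces to the case of a smooth projective curve; curves do lift to characteristic $0$ since the obstruction space $H^2(C, T_C)$ vanishes; and one concludes with the surjectivity of the specialization map for the \emph{full} fundamental group of a proper family together with the Riemann existence theorem, obtaining finite generation directly rather than through a counting argument. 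Your pro-$p$ step (Artin--Schreier plus finiteness of coherent cohomology gives finiteness of $H^1_{\et}(X,\bF_p)$, hence finite generation of the maximal pro-$p$ quotient by the Frattini argument) is correct, but it cannot be glued to the rest in the way you propose.
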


Unfortunately, the statement of \cref{proper_case} does not hold in the non-proper case as the following proposition illustrates:

\begin{prop}
Let $k$ be an algebraically closed field of positive characteristic. Then the \'etale fundamental group of the affine line $\pi_1^\emph{\et}(\mathbb A^1_k)$ is not topologically finitely generated.
\end{prop}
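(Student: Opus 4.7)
The plan is to exploit Artin--Schreier theory, which in characteristic $p>0$ produces abundantly many wildly ramified \'etale covers of $\bA^1_k$ and thereby forces $\pi_1^\et(\bA^1_k)$ to have far too many $\bZ/p$-quotients to be topologically finitely generated.

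First, to any polynomial $f(x) \in k[x]$ one associates the affine scheme $Y_f \coloneqq \Spec\big(k[x,y]/(y^p - y - f(x))\big)$, which defines a finite \'etale Galois cover $Y_f \to \bA^1_k$ with Galois group $\bZ/p$ acting by $y \mapsto y+1$. Two such covers are isomorphic (as pointed \'etale covers) if and only if $f_1 - f_2$ lies in the image of the additive endomorphism $F - \mathrm{id} \colon k[x] \to k[x]$ defined by $h \mapsto h^p - h$. Passing to isomorphism classes, classical Artin--Schreier theory yields an isomorphism of $\bF_p$-vector spaces
\[
    k[x]/(F - \mathrm{id})(k[x]) \xrightarrow{\sim} \Hom_{\cont}\big(\pi_1^\et(\bA^1_k), \bZ/p\big).
\]

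The heart of the argument is then to verify that the left-hand side is an infinite-dimensional $\bF_p$-vector space. The key observation is that for $g = \sum_m b_m x^m \in k[x]$, one has $(F-\mathrm{id})(g) = \sum_m b_m^p x^{pm} - \sum_m b_m x^m$, so its coefficient in any degree $n$ coprime to $p$ is exactly $-b_n$. A short degree-tracking argument shows that the classes of the monomials $\{x^n : n \geq 1,\ \gcd(n,p)=1\}$ are even $k$-linearly independent in the quotient: matching coefficients in the relation $\sum_{n \in S} a_n x^n = g^p - g$ with $S$ a finite set of positive integers coprime to $p$ forces the recursion $b_{p^k m} = b_m^{p^k}$, and since $g$ must be a polynomial with only finitely many nonzero coefficients, one concludes $a_m = 0$ for every $m \in S$.

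The conclusion is then immediate. A profinite group $G$ topologically generated by $d$ elements admits at most $p^d$ continuous homomorphisms to $\bZ/p$, since such a homomorphism is determined by the images of the generators; in particular $\dim_{\bF_p} \Hom_{\cont}(G, \bZ/p) \leq d < \infty$. As we have just exhibited infinitely many $\bF_p$-linearly independent such homomorphisms for $G = \pi_1^\et(\bA^1_k)$, the group cannot be topologically finitely generated. The only mildly delicate point is the linear-independence verification; the rest is structural.
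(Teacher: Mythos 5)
Your proof is correct, and it is rooted in the same phenomenon as the paper's argument --- wild $p$-covers of $\bA^1_k$ arising from additive polynomials --- but the execution is genuinely different. The paper exhibits, for each $n$, a single connected Galois cover with group $(\bZ/p)^n$, namely the isogeny $\phi_n \colon x \mapsto x^{p^n}-x$ of $\mathbb{G}_a$ with kernel $\bF_{p^n}$; since a topologically $d$-generated profinite group cannot surject onto $(\bZ/p)^n$ for $n>d$, the conclusion is immediate with no further bookkeeping. You instead compute the full group $\Hom_{\cont}\big(\pi_1^\et(\bA^1_k),\bZ/p\big) \cong k[x]/(F-\mathrm{id})k[x]$ via Artin--Schreier theory and verify by a coefficient-matching recursion that the prime-to-$p$ monomials give infinitely many independent classes. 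Your route costs a little more work (the identification with $H^1_\et$ and the linear-independence check, which you carry out correctly --- the recursion $b_{p^km}=b_m^{p^k}$ against finiteness of the support of $g$ is exactly the right point), but it buys more: it identifies the entire maximal elementary abelian $p$-quotient of $\pi_1^\et(\bA^1_k)$ as an infinite-dimensional $\bF_p$-vector space, rather than just producing arbitrarily large finite quotients. One cosmetic caveat: the quotient $k[x]/(F-\mathrm{id})k[x]$ is only an $\bF_p$-vector space, not a $k$-vector space, since $F-\mathrm{id}$ is merely $\bF_p$-linear; what your argument actually shows is that no nontrivial $k$-linear combination of the monomials $x^n$ with $p\nmid n$ lies in the image of $F-\mathrm{id}$, and the $\bF_p$-linear special case is all you need.
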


\begin{proof}
For each integer $n \geq 1$, one can exhibit Galois covers of $\mathbb A^1_k$ whose corresponding automorphism group is isomorphic to $\left( \mathbb Z / p \mathbb Z \right)^n$. This statement readily
implies that $\pi_1^{\et}(\mathbb A^1_k)$ does not admit a finite
number of
topological generators. In order to construct such coverings, we consider the following endomorphism of the affine line
	\[
		\phi_n \colon \mathbb A^1_k \to \mathbb A^1_k,
	\]
defined via the formula
	\[
		\phi_n \colon x \mapsto x^{p^n} - x.
	\] 
The endormophism $\phi_n$ respects the additive group structure on $\mathbb A^1_k$. Moreover,
the differential of $\phi_n$ equals $-1$. For this reason, $\phi_n$ induces an isomorphism on cotangent spaces and, in particular, it is an \'etale morphism.
As $k$ is algebraically closed, $\phi_n$ is surjective and it is finite, thus a finite \'etale covering. The automorphism group
of
$\phi_n$ is naturally
identified with its kernel, which is isomorphic to $\mathbb F_{p^n}$. The statement of the proposition now follows.
\end{proof}

\begin{defi}
Let $G$ be a profinite group and $p$ a prime numbger, we say that $G$ is \emph{quasi-$p$} if $G$ equals the subgroup generated by all $p$-Sylow subgroups of $G$.
\end{defi}

Examples of quasi-$2$ finite groups include the symmetric groups $S_n$, for $n \geq 2$. Moreover, for each prime $p$, the group $\mathrm{SL}_n(\mathbb F_p)$ is quasi-$p$.
Let $X = \mathbb A^1_k$ be the affine line over an algebraically closed field $k$ of characteristic $p>0$. 
We have the following result proved by Raynaud which was originally a conjecture of Abhyankar:

\begin{theorem}{\cite[Conjecture 10]{Clark}} 
Every finite quasi-$p$ group can be realized as a quotient of $\pi_1^{\emph{\et}} \left(X \right)$.
\end{theorem}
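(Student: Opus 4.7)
The plan is to argue by induction on the order $|G|$ of the finite quasi-$p$ group, splitting into two cases according to whether $G$ admits a nontrivial normal $p$-subgroup. The case $|G| = 1$ is vacuous.

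\emph{Local case: $G$ contains a nontrivial normal $p$-subgroup $N$.} The quotient $G/N$ is again quasi-$p$, because the images of the $p$-Sylow subgroups of $G$ are $p$-Sylow in $G/N$ and continue to generate. By the inductive hypothesis there is a connected Galois \'etale cover $Y_0 \to \mathbb A^1_k$ with group $G/N$. It then suffices to construct a connected $N$-torsor $Y \to Y_0$ whose composition with $Y_0 \to \mathbb A^1_k$ is Galois with group $G$ and realizes the prescribed extension $1 \to N \to G \to G/N \to 1$. Since $Y_0$ is a smooth affine curve over $k$ and $N$ is a $p$-group, such torsors are governed by the Artin--Schreier--Witt cohomology of $Y_0$, which is enormous, so existence and connectedness can be arranged by a direct cohomological computation.

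\emph{Non-local case: $G$ has no nontrivial normal $p$-subgroup.} The plan is to write $G$ as generated by a family of proper quasi-$p$ subgroups $H_1, \dots, H_r$ (the existence of such a family is the first nontrivial group-theoretic input; one may take, for instance, subgroups generated by well-chosen conjugate pairs of $p$-Sylow subgroups of $G$). By induction each $H_i$ is realized as the Galois group of a connected \'etale cover of $\mathbb A^1_k$, hence of a cover of $\mathbb P^1_k$ ramified only above $\infty$. I would then pass to a semistable formal model of the formal disc at $\infty$ whose special fiber is a tree of projective lines meeting transversally at nodes, construct on each irreducible component a cover realizing one of the $H_i$ with prescribed, prime-to-$p$ inertia at the relevant nodes, and patch these local covers using Harbater's formal Galois patching together with Raynaud's rigid GAGA to descend to a global \'etale cover of $\mathbb A^1_k$ with Galois group $G$.

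The main obstacle is the non-local case: the combinatorics of selecting the subgroups $H_i$, the distribution of the components of the semistable tree, and the cyclic (tame) inertia groups at the nodes must be arranged so that the patched cover has Galois group exactly $G$, not merely a group admitting $G$ as a subquotient, and so that it remains connected. This is the heart of Raynaud's original argument and requires the careful notion of an \emph{admissible} configuration together with a delicate analysis of how tame inertia at a node amalgamates the two sides under formal patching. Once this combinatorial choice is made, the remaining rigid-analytic and GAGA steps are formal; the local case, by contrast, is essentially automatic once Artin--Schreier--Witt theory is in hand.
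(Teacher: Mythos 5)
This statement is Abhyankar's conjecture for the affine line, i.e.\ Raynaud's theorem; the paper does not prove it but simply cites it from the literature, so the relevant question is whether your sketch amounts to a self-contained proof. It does not. You have correctly reproduced the architecture of Raynaud's argument (induction on $\lvert G\rvert$, dichotomy according to whether $G$ has a nontrivial normal $p$-subgroup, embedding problems with $p$-group kernel in the first case, formal/rigid patching over a semistable model of the disc at infinity in the second), but in the non-local case you explicitly defer the entire content of the theorem: the choice of the generating quasi-$p$ subgroups $H_i$, the admissibility condition on the configuration of components and tame inertia at the nodes, and the verification that the patched cover is connected with Galois group exactly $G$. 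You name this as ``the heart of Raynaud's original argument'' and do not supply it, so the proposal is an outline rather than a proof. Note also that Raynaud's induction is more delicate than stated: one needs the covers realizing the $H_i$ to come with prescribed (prime-to-$p$) inertia at the points lying over the nodes, which is a strengthening of the inductive hypothesis, not a consequence of it.

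The local case is also not ``essentially automatic.'' Having a large Artin--Schreier--Witt cohomology gives abelian $p$-covers, but realizing the prescribed extension $1 \to N \to G \to G/N \to 1$ with its $G/N$-action, for a possibly nonabelian $p$-group $N$, is the statement that every embedding problem with $p$-group kernel over $\mathbb{A}^1_k$ has a \emph{proper} (i.e.\ surjective, hence connected) solution. The existence of a weak solution uses $\mathrm{cd}_p\bigl(\pi_1^{\mathrm{\acute{e}t}}(\mathbb{A}^1_k)\bigr) \le 1$ for an affine variety in characteristic $p$, and upgrading it to a proper solution is Serre's theorem on the affine line; both inputs should be cited or proved rather than absorbed into ``a direct cohomological computation.'' If your goal is to match the paper, the correct move is simply to cite Raynaud (and Serre for the local case), as the author does.
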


\begin{rema}
In the example of the affine line the infinite nature of $\pi_1(\mathbb A^1_k )$ arises as a phenomenon of the existence of \'etale coverings whose ramification at infinity can be as large as we desire. This phenomenon is special to the positive characteristic
setting.
Neverthless, we can prove that $\pi_1^{\et}(X)$ admits a topologically finitely generated quotient which corresponds to the group of automorphisms of tamely ramified coverings. On the other hand, in the proper case every finite \'etale
covering of $X$ is everywhere unramified.
\end{rema}

\begin{defi}
Let $X \hookrightarrow \overline{X}$ be a normal compactification of $X$, whose existence is guaranteed by \cite{nagata}. Let
$f \colon Y \to X$ be a finite \'etale cover with connected source.
We say that $f$ is \emph{tamely ramified along} the divisor $D : = \overline{X} \backslash X$ if every codimension-$1$ point $x \in D$ is
tamely ramified in the corresponding extension field extension $k(Y) / k(X)$. 
\end{defi}

\begin{prop}
Tamely ramified extensions along $D \coloneqq \overline{X} \backslash X$ of $X$ are classified by a quotient $\pi_1^{\emph{\et}}(X) \to \tamepi(X, D)$,
referred to as the \emph{tame fundamental group of $X$ along $D$}.
\end{prop}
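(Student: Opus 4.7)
The plan is to realise the tamely ramified covers of $X$ along $D$ as a Galois subcategory of the full category of finite \'etale covers of $X$, and then invoke Grothendieck's fundamental theorem of Galois categories to extract the desired quotient of $\pi_1^{\et}(X)$.

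More precisely, let $\mathcal{C}(X)$ denote the Galois category of finite \'etale covers of $X$ equipped with the fiber functor $F_{\overline{x}}$ associated to the fixed geometric point $\iota_x \colon \overline{x} \to X$, so that by construction $\pi_1^{\et}(X) \simeq \mathrm{Aut}(F_{\overline{x}})$. I would then define $\mathcal{C}^t(X,D) \subset \mathcal{C}(X)$ as the full subcategory whose objects are those $f \colon Y \to X$ each of whose connected components is tamely ramified along $D$ in the sense of the preceding definition.

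The heart of the argument is then to verify that $\mathcal{C}^t(X,D)$ inherits from $\mathcal{C}(X)$ the structure of a Galois category, the fiber functor being the restriction of $F_{\overline{x}}$. Concretely, one checks stability under: finite coproducts, which is immediate since tame ramification is a property of connected components; passage to connected components and to quotients by finite groups of $X$-automorphisms, which reduce to the observation that tameness is preserved under intermediate extensions of the function field; and, most importantly, fiber products. Once this is done, Grothendieck's theorem produces a profinite group $\tamepi(X,D) \coloneqq \mathrm{Aut}\bigl(F_{\overline{x}}\vert_{\mathcal{C}^t(X,D)}\bigr)$ through which every tamely ramified cover factors, and the fully faithful inclusion $\mathcal{C}^t(X,D) \hookrightarrow \mathcal{C}(X)$, being compatible with fiber functors, corresponds under the contravariant equivalence to a continuous surjection $\pi_1^{\et}(X) \twoheadrightarrow \tamepi(X, D)$.

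The hard step will be the stability of $\mathcal{C}^t(X,D)$ under fiber products. Given two tamely ramified covers $Y_1 \to X$ and $Y_2 \to X$, one must show that every connected component of $Y_1 \times_X Y_2$ remains tamely ramified along $D$. This reduces, at each codimension-one point $x \in D$, to a statement about the compositum of two tame extensions of the discrete valuation ring $\cO_{\overline{X},x}$, and this is precisely Abhyankar's lemma: since tame extensions of a complete d.v.r.\ of residue characteristic $p$ are obtained by adjoining roots of a uniformiser of order prime to $p$, their compositum remains tame. Once this local statement is granted, globalisation over the codimension-one points of $D$ is routine, and the rest of the Galois-category axioms follow formally.
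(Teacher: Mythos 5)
The paper states this proposition as a recalled classical fact and gives no proof at all (the surrounding subsection is explicitly expository), so there is no argument of the author's to compare yours against. Your proposal is the standard proof, going back to SGA~1, Expos\'e~XIII (Grothendieck--Murre), and it is correct: one checks that the tamely ramified covers form a full Galois subcategory of the finite \'etale covers of $X$, closed under subobjects, quotients, finite coproducts and fiber products, and then the general formalism of Galois categories produces the profinite group $\tamepi(X,D)$ together with a continuous surjection from $\pi_1^{\et}(X)$ (surjectivity being exactly the statement that the inclusion of the subcategory is fully faithful with image closed under subobjects). You have also correctly isolated the only nontrivial geometric input, namely stability under fiber products, and reduced it to the fact that a compositum of tamely ramified extensions of a discrete valuation ring is tamely ramified.

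Two small points worth making explicit if you write this up. First, $\overline{X}$ is only assumed normal, not regular, so you should say why the local ring at a codimension-one point of $D$ is a discrete valuation ring: this is exactly Serre's criterion for normality, and it is what makes the local analysis meaningful. Second, the fact you actually need is slightly weaker than Abhyankar's lemma in its usual form (which concerns killing ramification after a tame base change); what you use is the closure of tame extensions under compositum, which follows from the description of the maximal tame quotient of the absolute Galois group of the fraction field of a strictly henselian discrete valuation ring as $\widehat{\bZ}'(1)$, i.e.\ from the Kummer-theoretic structure of tame extensions that you invoke. Neither point affects the correctness of the argument.
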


\begin{rema} Let $\overline{X}$ denote a smooth compactification of $X$ and $D \coloneqq \overline{X} \backslash X.$
We denote by $\wildpi(X, D)$, the \emph{wild fundamental group of $X$ along $D$},
the kernel of the continuous morphism $ \pi_1^{\et}(X) \to \tamepi(X)$.
\end{rema}

\begin{defi} \label{}
Assume $X$ is a normal connected scheme over $k$.
\begin{enumerate}
\item Let $f \colon Y \to X$ be an \'etale covering. We say that $f$ is \emph{divisor-tame} if for every normal compactification
$X \hookrightarrow \overline{X}$, $f$ is tamely ramified along $D = \overline{X} \backslash X$.
\item Let $f \colon Y \to X$ be an \'etale covering. We shall refer to $f$ as \emph{curve-tame} if for every smooth curve $C$ over $k$ and morphism
$g \colon C \to X$, the base change $Y \times_{X} C \to C$ is a tame covering of the curve $C$.
\end{enumerate}
\end{defi}

\begin{rema}
In \cref{} $X$ is assumed to be a normal connected scheme over a field of positive characteristic. Currently, we lack a
resolution of singularities theorem in this
setting. Therefore, a priori, one cannot expect that both divisor-tame and curve-tame notions agree in general. Indeed, one can expect
many regular normal crossing compactifications of $X$ to exist, or none. 
\end{rema}

Neverthless, one has the following result:

\begin{prop}{\cite[Theorem 1.1]{kerz_tame}}
Let $X$ be a smooth scheme over $k$ and let $f \colon Y \to X$ be an \'etale covering. Then $f$ is divisor-tame if and only if it is curve-tame.
\end{prop}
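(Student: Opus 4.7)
My plan is to reduce each implication to a statement about ramification indices at codimension-$1$ points of the boundary divisor $D \coloneqq \overline{X} \setminus X$ in a fixed normal compactification $X \hookrightarrow \overline{X}$, with Abhyankar's lemma on tame covers of regular schemes under base change as the central technical tool. The normality of $\overline{X}$ guarantees regularity at codimension-$1$ points, which is exactly what Abhyankar requires.

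For divisor-tame $\Rightarrow$ curve-tame, fix $g \colon C \to X$ with $C$ smooth. Choose a smooth compactification $\overline{C}$ of $C$ and, via the valuative criterion of properness applied to a normal compactification $\overline{X}$, extend $g$ to $\overline{g} \colon \overline{C} \to \overline{X}$. After replacing $C$ by $\overline{g}^{-1}(X)$, every $c \in \overline{C} \setminus C$ lies in $D$. Writing $\overline{Y} \to \overline{X}$ for the normalization of $\overline{X}$ in the function field of $Y$, which is tame along $D$ by hypothesis, Abhyankar's lemma applied to the induced morphism of DVRs $\mathcal{O}_{\overline{X}, \overline{g}(c)} \to \mathcal{O}_{\overline{C}, c}$ shows that the normalization of $\overline{C}$ in the function field of $Y \times_X C$ is tamely ramified at $c$, which is exactly curve-tameness.

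The converse is harder. Fix a normal compactification $\overline{X}$ and a codimension-$1$ point $\xi$ of $D$; since $\overline{X}$ is normal, $\mathcal{O}_{\overline{X}, \xi}$ is a DVR, and the task is to show $\mathcal{O}_{\overline{X}, \xi} \to \mathcal{O}_{\overline{Y}, \eta}$ is tame for every $\eta$ above $\xi$. My approach is to produce a smooth curve $C \to X$ admitting a smooth compactification $\overline{C}$ with a point $c \in \overline{C} \setminus C$ mapping to $\xi$ under the extended morphism, such that the induced extension of DVRs $\mathcal{O}_{\overline{X}, \xi} \to \mathcal{O}_{\overline{C}, c}$ is unramified. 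Given such a curve, the curve-tame hypothesis forces the pullback cover to be tame at $c$, and multiplicativity of ramification indices in the resulting tower of DVRs then yields tameness at $\xi$.

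The heart of the matter, and the main obstacle, is the construction of this transverse test curve. Since $X$ is smooth, the singular locus of $\overline{X}$ is contained in $D$ and has codimension $\geq 2$ in $\overline{X}$, so $\xi$ lies in the regular locus $\overline{X}^{\mathrm{reg}}$. Locally at $\xi$, a regular system of parameters whose first element is a uniformizer of $\mathcal{O}_{\overline{X}, \xi}$ cuts out a curve-germ through $\xi$ meeting $\overline{\{\xi\}}$ transversally; the difficulty is globalizing this germ to an actual smooth curve in $\overline{X}$ whose intersection with $X$ is nonempty. This is achieved by a Bertini-type argument on $\overline{X}^{\mathrm{reg}}$, for instance after an embedding into projective space, cutting by $\dim(\overline{X})-1$ general hyperplane sections passing through a chosen closed point of $\overline{\{\xi\}}$ in the regular locus and invoking generic smoothness to ensure the resulting curve is smooth and meets $D$ transversally at that point.
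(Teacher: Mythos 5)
The paper offers no proof of this proposition: it is quoted directly from Kerz--Schmidt \cite{kerz_tame}, so your argument has to stand entirely on its own, and as written it does not. Both implications founder on the same point: a curve meets the boundary at \emph{closed} points of $\overline{X}$, whereas divisor-tameness only controls ramification at the codimension-$1$ points of $D$. In the direction divisor-tame $\Rightarrow$ curve-tame, for $c \in \overline{C}\setminus C$ the image $\overline{g}(c)$ is a closed point of $\overline{X}$, so $\mathcal{O}_{\overline{X},\overline{g}(c)}$ has dimension $\dim X$ and is not a DVR once $\dim X \geq 2$; the form of Abhyankar's lemma you invoke therefore does not apply, and tameness at the generic points of $D$ says nothing a priori about the (non-divisorial) valuation $\mathrm{ord}_c\vert_{k(X)}$, whose residue field is finite over $k$. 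Bridging this gap is the actual content of the theorem; in \cite{kerz_tame} it requires the full quantifier ``for all normal compactifications'' and a blow-up/valuation-theoretic argument approximating $\mathrm{ord}_c\vert_{k(X)}$ by divisorial valuations.

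The converse direction has the mirror-image defect: $c$ cannot map to $\xi$ (its image is a closed point of $\overline{\{\xi\}}$), and the claimed tower of DVRs $\mathcal{O}_{\overline{X},\xi} \subset \mathcal{O}_{\overline{C},c}$ does not exist inside $k(C)$ --- a function regular at the generic point $\xi$ of $D$ may have poles along a divisor through $\overline{g}(c)$. Since $\mathrm{ord}_c\vert_{k(X)}$ and $v_\xi$ are genuinely different valuations (their residue fields have different transcendence degrees), multiplicativity of ramification indices is unavailable. What is missing is a semicontinuity statement: that wildness of the cover at $\xi$ forces wildness of its pullback to a transversal curve through a \emph{sufficiently general} closed point of $\overline{\{\xi\}}$ lying in the regular locus. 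Your Bertini step also leans on generic smoothness, which fails in characteristic $p$; one must instead use the classical Bertini theorem for hyperplane sections, with extra care at the imposed base point. The construction of the transverse test curve is the right idea for this direction, but without the specialization lemma it proves nothing about $\xi$ itself. Given the depth of the required inputs, citing \cite{kerz_tame}, as the paper does, is the appropriate course.
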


\begin{defi}
The \emph{tame fundamental group} $\tamepi(X)$ is defined as the quotient of $\pi_1^\et(X)$ by the normal closure of
opens subgroup of $\pi_1^\et(X)$ generated by the wild fundamental groups  $\pi_1^w(X, D)$ along $D$,
for each normal compactification $X \hookrightarrow \overline{X}$. 
\end{defi}

\begin{rema} The notion of tameness is stable under arbitrary base changes between smooth schemes. In particular, given
a morphism $f \colon Y \to X$ between smooth schemes over $k$, one has a functorial well defined morphism $\tamepi (Y) \to \tamepi(X)$ fitting in a commutative
diagram of profinite groups
    \[
    \begin{tikzcd}
        \pi_1^\et(Y) \ar{r} \ar{d} & \pi_1^\et(X) \ar{d} \\
        \tamepi(Y) \ar{r} & \tamepi(X).
    \end{tikzcd}
    \]
Moreover, the profinite group $\tamepi(X)$ classifies tamely ramified \'etale coverings of $X$.
\end{rema}

\begin{rema}
The tame fundamental group $\tamepi(X)$ classifies finite \'etale coverings $f \colon X \to Y$ which are tamely ramified
along any divisor at infinity.
\end{rema}

\begin{defi}
We define the \emph{wild fundamental group} of $X$, denoted $\pi_1^w(X)$, as the kernel of the surjection $\pi_1^{\et}(X) \to
\tamepi(X)$. It is an open normal subgroup of $\pi_1^{\et}(X)$.
\end{defi}

\begin{prop}{\cite{Clark}}
Let $C$ be a geometrically connected smooth curve over $k$. Then the wild fundamental group $\pi_1^w(C)$ is a pro-$p$-group.
\end{prop}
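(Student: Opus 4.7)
The plan is to reduce the global pro-$p$-ness of $\wildpi(C)$ to the local pro-$p$-ness of the wild inertia of a complete discrete valuation field, already recalled in \S 2.1. Let $\overline{C}$ denote the (essentially unique) smooth projective compactification of the smooth curve $C$, and put $D \coloneqq \overline{C} \setminus C = \{x_1, \dots, x_r\}$. For each $i$, the completed local ring $\widehat{\cO}_{\overline{C}, x_i}$ is a complete DVR and its fraction field $K_i \simeq k(\!(t_i)\!)$ is a local field with residue field $k$. In particular, the local analysis of \S 2.1 produces a short exact sequence
\[
1 \to P_{K_i} \to I_{K_i} \to I_{K_i}/P_{K_i} \to 1,
\]
in which $P_{K_i}$ is pro-$p$ by \cite[Lemma 53.13.6]{stacks}.

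First I would use the functoriality of $\pi_1^\et$ applied to the natural map $\Spec K_i \to C$ to produce continuous homomorphisms $\phi_i \colon G_{K_i} \to \pi_1^\et(C)$, well defined up to conjugation. By the very definition of tameness along $D$, a finite \'etale cover $f \colon Y \to C$ is tame at $x_i$ precisely when the composition $G_{K_i} \to \pi_1^\et(C) \to \Gal(Y/C)$ kills $P_{K_i}$. From this characterization I would identify $\wildpi(C)$ with the smallest closed normal subgroup of $\pi_1^\et(C)$ containing $\bigcup_i \phi_i(P_{K_i})$: indeed, $\tamepi(C)$ is by construction the largest continuous quotient of $\pi_1^\et(C)$ on which every $\phi_i(P_{K_i})$ acts trivially, so its kernel is exactly this closed normal closure.

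Next I would verify that this closed normal subgroup is itself pro-$p$. Given a continuous surjection onto a finite group $\wildpi(C) \twoheadrightarrow H$, one enlarges the kernel so as to realize $H$ as the image of $\wildpi(C)$ inside a finite Galois quotient $\pi_1^\et(C) \twoheadrightarrow G$; by the previous step, $H$ coincides with the normal subgroup of $G$ topologically generated by the finite $p$-groups $\phi_i(P_{K_i})$. The curve-specific input of \cite{Clark} --- resting on the very rigid form of the ramification filtration at each puncture of a finite Galois cover of a smooth curve --- then guarantees that this normal closure is itself a $p$-group. Varying $H$ over all finite continuous quotients of $\wildpi(C)$ yields the conclusion.

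The hard part will be the last step. In a general profinite group, the closed normal subgroup topologically generated by pro-$p$ subgroups need not be pro-$p$, so the argument must genuinely exploit the geometric structure of Galois covers of curves, in particular the local-to-global behaviour of the wild ramification at the finitely many punctures of $D$. This is precisely where the hypothesis $\dim C = 1$ is essential, and it is also the reason why the analogous statement cannot be expected to hold verbatim for higher-dimensional smooth varieties.
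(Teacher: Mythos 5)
Your reduction is sound up to and including the identification of $\wildpi(C)$ with the closed normal subgroup of $\pi_1^\et(C)$ topologically generated by the images $\phi_i(P_{K_i})$ of the local wild inertia groups at the punctures, and each $\phi_i(P_{K_i})$ is indeed pro-$p$, being a continuous quotient of the pro-$p$ group $P_{K_i}$. The problem is the final step, which you yourself single out as the hard part: it is not merely hard, it is false, and no curve-specific input can rescue it. Take $C = \bA^1_k$. As recorded elsewhere in this paper, $\tamepi(\bA^1_k)$ is trivial, so with the paper's definition $\wildpi(\bA^1_k) = \pi_1^\et(\bA^1_k)$. On the other hand, by Raynaud's theorem on Abhyankar's conjecture (also quoted in the paper), every finite quasi-$p$ group, for instance $\mathrm{SL}_2(\bF_p)$, is a continuous quotient of $\pi_1^\et(\bA^1_k)$, and $\mathrm{SL}_2(\bF_p)$ is not a $p$-group. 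Hence $\wildpi(\bA^1_k)$ is not pro-$p$: the closed normal closure of pro-$p$ subgroups can be, and here is, much larger than a pro-$p$ group.

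What your first two steps actually establish is the correct and useful statement: $\wildpi(C)$ is topologically normally generated by pro-$p$ subgroups (equivalently, the higher ramification subgroups at each point of any finite Galois cover of a curve are $p$-groups, which is the local fact from \S 2.1 and is what the cited source contains). That characterizes which finite quotients of $\pi_1^\et(C)$ are tame, but it does not make $\wildpi(C)$ itself pro-$p$. To obtain a true proposition one must either restrict to proper curves, where $\wildpi(C)$ is trivial, or reinterpret the assertion as being about the individual local wild inertia subgroups rather than their global normal closure. As stated, the proposition conflicts with the triviality of $\tamepi(\bA^1_k)$ and the Abhyankar--Raynaud theorem asserted earlier in the same section, so the gap in your last step is a gap in the statement itself, not in your argument.
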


\begin{theorem}{\cite[Appendix 1, Theorem 1]{cadoret}} \label{cadoret}
Let $X$ be a smooth and geometrically connected scheme over $k$. There exists a smooth, geometrically connected curve
$C/ k$ together with a morphism $f \colon C \to X$ of varieties such that the corresponding morphism at the level of
fundamental groups $\pi_1^{\emph{\et}}(C) \to \pi_1^{\emph{\et}}(X) \to \tamepi(X)$ is surjective and it factors by a well defined morphism
$\pi_1^{\mathrm{t}}(C) \to \tamepi(X)$. In particular, $\tamepi(X)$ is topologically finitely generated.
\end{theorem}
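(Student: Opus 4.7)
The plan is to produce a smooth geometrically connected curve $C/k$ equipped with a morphism $f \colon C \to X$ whose associated map on fundamental groups surjects onto $\tamepi(X)$, and then to invoke the known finite generation of the tame fundamental group of a smooth curve. This fits the classical Lefschetz paradigm: cut $X$ down to a curve by generic hyperplane sections and control tameness at the boundary.

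\textbf{Step 1 (Compactification and setup).} By Nagata's theorem fix a normal compactification $X \hookrightarrow \overline{X}$ with boundary divisor $D := \overline{X} \setminus X$; up to de Jong alterations we may further assume $\overline{X}$ smooth and $D$ a simple normal crossings divisor, which is harmless because tameness along $D$ is intrinsic to $X$. Embed $\overline{X}$ in a projective space $\mathbb P^N_k$ via a very ample line bundle.

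\textbf{Step 2 (Bertini-type surjectivity).} Iteratively slicing $\overline{X}$ by $\dim X - 1$ sufficiently general hyperplane sections produces a smooth geometrically connected curve $\overline{C} \subset \overline{X}$ meeting $D$ transversally; set $C := \overline{C} \cap X$. Sufficient genericity ensures (in the style of Drinfeld's space-filling curves or the Bertini-type theorems for fundamental groups from SGA 2 and their tame refinements à la Esnault--Kindler) that for every connected tame étale cover $Y \to X$ the base change $Y \times_X C \to C$ remains connected. Since the functor of tame covers of $X$ is naturally equivalent to the category of continuous finite $\tamepi(X)$-sets, this connectedness translates directly into surjectivity of the composite $\pi_1^{\et}(C) \to \pi_1^{\et}(X) \to \tamepi(X)$.

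\textbf{Step 3 (Factorization through $\tamepi(C)$ and conclusion).} By the stability of tameness under morphisms between smooth $k$-schemes (the remark preceding \cite[Theorem 1.1]{kerz_tame}), if $Y \to X$ is tame along $D$ then $Y \times_X C \to C$ is tame along $\overline{C} \setminus C$. Hence the surjection from Step 2 kills the wild subgroup $\wildpi(C) \subset \pi_1^{\et}(C)$ and descends to a surjection $\tamepi(C) \twoheadrightarrow \tamepi(X)$. It is classical (via Grothendieck's specialization theorem and the description of the prime-to-$p$ part of $\tamepi(\overline{C} \setminus S)$ for a finite set $S$ of points on a smooth projective curve $\overline{C}$ of genus $g$, namely $2g + \#S - 1$ topological generators modulo the surface relation) that $\tamepi(C)$ is topologically finitely generated. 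Hence so is its topological quotient $\tamepi(X)$.

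\textbf{Main obstacle.} The essential difficulty is Step 2, i.e.\ engineering a single curve $C$ on which every connected tame cover of $X$ restricts to a connected cover. The SGA 2 Lefschetz theorem yields the analogous surjectivity for $\pi_1^{\et}$ in the projective case, but in the open setting one must simultaneously guarantee that (i) no tame cover becomes disconnected after restriction to $C$ and (ii) the ramification along $D$ is faithfully detected along $\overline{C} \cap D$, so that tameness does not degenerate to wild ramification on the curve. Overcoming this is precisely the content of Cadoret's construction, and is the technical heart of the statement; everything else is formal bookkeeping with the quotient $\pi_1^{\et} \twoheadrightarrow \tamepi$.
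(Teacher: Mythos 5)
The paper does not prove this statement at all: it is imported verbatim as a citation to Cadoret's appendix, so there is no internal argument to compare yours against. Judged on its own terms, your outline follows the strategy that the cited literature actually uses (Lefschetz-type slicing down to a curve, transfer of tameness to the boundary points of the curve, and the classical finite generation of the tame fundamental group of an affine curve), and Steps 1 and 3 are essentially correct bookkeeping. In particular the Galois-categorical translation of surjectivity into ``every connected tame cover pulls back to a connected cover of $C$'' and the use of curve-tameness to see that the composite kills $\wildpi(C)$ are the right formal moves.

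The genuine gap is Step 2, and you have candidly identified it yourself: ordinary Bertini only keeps the pullback of \emph{finitely many} given covers connected, whereas the theorem requires a single curve $C$ that works simultaneously for \emph{all} connected tame covers of $X$. In the projective case this is supplied by the Lefschetz theorem of SGA~2; in the open case one must in addition bound the ramification of tame covers along $D$ uniformly so that a complete intersection curve of fixed (sufficiently large) degree meets every cover transversally enough --- this uniform boundedness is precisely the content of Cadoret's construction (and of the Esnault--Kindler / Kerz--Schmidt circle of results), and asserting it ``by sufficient genericity'' is assuming the theorem. A secondary, smaller problem is your appeal to de Jong alterations in Step 1: an alteration $X' \to X$ is generically finite rather than birational, so it changes $\pi_1^{\et}$ and cannot be invoked as ``harmless''; you should instead work with a normal projective compactification (Nagata plus Chow's lemma plus normalization) and rely on the intrinsic curve-tameness characterization of Kerz--Schmidt rather than on an SNC boundary. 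As it stands the proposal is a faithful road map to the known proof, not a proof.
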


\begin{rema}
\cref{cadoret} implies that $\tamepi (\mathbb A^1_k)$ admits a finite number
of topological generators. In fact, the group $\tamepi(\mathbb A^1_k)$ is trivial.
\end{rema}

\subsection{Moduli of continuous $\ell$-adic representations} \label{section 2.3}
In this \S, $X$ denotes a smooth scheme over an algebraically closed field of positive characteristic $p > 0$. Nevertheless, our arguments apply when $X$ is
the spectrum of a local field of mixed characteristic.

\begin{rema}
Let $A \in \Afd$ be $\Q_{\ell}$-affinoid algebra $A \in \Afd$. It admits a natural topology induced from a choice of a norm on $A$, compatible with the usual $\ell$-adic valuation on $\Q_\ell$. 
Given $\mathbf G$
an analytic $\Q_\ell$-group space we can consider the corresponding group of $A$-points on $\mathbf G$, $\mathbf G(A)$. The group $\mathbf G (A)$ admits a natural topology induced from the non-archimedean topology on $A$.
In the current text we will be interested in studying the moduli functor parametrizing continuous representations
	\[
		\rho \colon \pi_1^\et(X) \to \anGLn(A).
	\]
Nevertheless, our arguments can be directly applied when we instead consider the moduli of continuous representations 
    \[
        \pi_1^\et(X) \to \mathbf G^\an (A),
    \]
where $\mathbf G$ denotes a reductive group scheme.
\end{rema}

\begin{defi} Let $G$ be a profinite group.
Denote by
	\[
		\LocSysfr(G) \colon \Afdl \to  \mathrm{Set},
	\]
the \emph{functor of rank $n$ continuous $\ell$-adic group homomorphisms of $G$}. It is given on objects by the formula
	\begin{equation} \label{e21}
		A \in \Afd^{\op} \mapsto \Hom_{\mathrm{cont}} \left( G, \GLn( A) \right) \in \mathrm{Set},
	\end{equation}
where the right hand side of \eqref{e21} denotes the set of continuous group homomorphisms $G_K \to \GLn(A)$.
\end{defi}

\begin{notation}
Whenever $G = \pi_1^\et(X)$ we denote $\LocSys(X) \coloneqq \LocSys(\pi_1^\et(X))$.
\end{notation}

\begin{prop}{\cite[Corollary 2.2.16]{me1}} \label{me1}
Suppose $G$ is a topologically finitely generated profinite group. Then the functor $\LocSysfr(G)$
is representable by a $\Q_{\ell}$-analytic space. 
\end{prop}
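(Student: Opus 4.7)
The plan is to realise $\LocSysfr(G)$ as the rigid generic fibre of an admissible formal $\bZ_\ell$-scheme built out of the finite quotients of $G$, exploiting topological finite generation to ensure admissibility of the formal model.

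I would first fix topological generators $g_1, \ldots, g_r$ of $G$ and a cofinal descending sequence of open normal subgroups $N_1 \supseteq N_2 \supseteq \cdots$ with trivial intersection, writing $G \simeq \varprojlim_i G_i$ with $G_i := G/N_i$ finite. Each $G_i$ is finitely presented, and the naive functor $B \mapsto \Hom(G_i, \GLn(B))$ on commutative rings is representable by an affine scheme $\mathbf R_i$ of finite type over $\bZ$, obtained as the closed subscheme of $\GLn^r$ cut out by the (finitely many) relations defining $G_i$. This produces a tower of closed immersions $\cdots \hookrightarrow \mathbf R_{i+1} \hookrightarrow \mathbf R_i$.

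Next, I would consider the formal moduli on $\ell$-adically complete $\bZ_\ell$-algebras:
\[
R \mapsto \Hom_{\cont}\bigl(G, \GLn(R)\bigr).
\]
For such $R$ the quotient $R/\ell^k R$ is discrete, hence the image of any continuous homomorphism $G \to \GLn(R/\ell^k R)$ is finite and factors through some $G_i$. Consequently
\[
\Hom_{\cont}\bigl(G, \GLn(R)\bigr) \simeq \varprojlim_k \varinjlim_i \mathbf R_i\bigl(R/\ell^k R\bigr),
\]
and one shows that this functor is representable by an admissible formal $\bZ_\ell$-scheme $\mathfrak R$. Passing to the rigid generic fibre yields a $\Ql$-analytic space $\mathfrak R^{\rig} \in \Anl$, and I would identify it with $\LocSysfr(G)$ by using that any continuous representation $\rho : G \to \GLn(A)$ has bounded image, so after a change of framing it lands in $\GLn(A^\circ)$ and corresponds tautologically to an $A$-point of $\mathfrak R^{\rig}$.

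The main obstacle will be verifying admissibility of $\mathfrak R$, that is, the local finiteness of the tower $\{\mathbf R_i\}$ after $\ell$-adic completion. The resolution proceeds by localising at a residual representation $\bar\rho : G \to \GLn(\overline{\bF}_\ell)$: topological finite generation of $G$ ensures that the continuous cohomology $H^1_{\cont}(G, \mathrm{Ad}(\bar\rho))$ is finite-dimensional, so the local formal deformation ring of $\bar\rho$ is Noetherian complete local with finite-dimensional tangent space. This provides a local presentation of $\mathfrak R^{\rig}$ as a closed analytic subspace of an open polydisc, showing that $\mathfrak R^{\rig}$ is a genuine $\Ql$-analytic space and not merely a pro-analytic object.
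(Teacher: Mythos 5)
First, on context: the paper does not prove this proposition at all --- it is imported verbatim from \cite[Corollary 2.2.16]{me1} (and the only related argument in the present text, the proof of \cref{hom_loc}, proceeds by embedding into the representation space of a free profinite group $\mathrm F^{[r]}$ and citing \cite[Theorem 2.2.15]{me1}). Your overall architecture --- finite-level representation schemes $\mathbf R_i$ for the finite quotients $G_i$, a formal model over $\bZ_\ell$ coming from the fact that continuous homomorphisms into $\GLn(R/\ell^k R)$ factor through some $G_i$, and passage to a rigid generic fibre --- is the standard one and is consistent with that reference. One cosmetic correction: since $N_{i+1}\subseteq N_i$ the surjections run $G_{i+1}\twoheadrightarrow G_i$, so the closed immersions run $\mathbf R_i\hookrightarrow\mathbf R_{i+1}$; your tower is an \emph{increasing} union, hence a priori only an ind-scheme, which is exactly why representability is not formal.

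The substantive gap is your final paragraph. The tubes $\Def^{\rig}_{\overline\rho}$ attached to residual representations $\overline\rho\colon G\to\GLn(\overline{\bF}_\ell)$ do \emph{not} form an admissible cover of the functor $\LocSysfr(G)$, so they cannot supply the local presentation you want. The paper itself insists on this point: by \cref{open_im} the disjoint union $\coprod_{\overline\rho}\Def^{\rig}_{\overline\rho}$ is only an analytic subdomain, and \cref{ex_surj} exhibits, already for $G=\bZ_\ell$ and $n=2$, the $\Q_\ell\langle T\rangle$-point $1\mapsto\left(\begin{smallmatrix}1&T\\0&1\end{smallmatrix}\right)$ which factors through no single $\Def^{\rig}_{\overline\rho}$, because its reduction lives over the generic point of $\bA^1_{\bF_\ell}$ rather than over a closed point of the special fibre. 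A functor can contain $\coprod_{\overline\rho}\Def^{\rig}_{\overline\rho}$ as an open subfunctor, and even agree with it on $\overline{\Q}_\ell$-points, without being representable; representability requires an admissible affinoid covering of the functor itself. What is actually needed is local Noetherianity of the formal model: topological finite generation gives a closed embedding of $\mathfrak R$ into an $r$-fold product of (completed) $\GLn$'s via the images of the generators, and the finiteness of the set of open subgroups of bounded index makes the ind-system $\{\mathbf R_i\}$ locally eventually constant, so that $\mathfrak R$ is locally \emph{formally} of finite type over $\bZ_\ell$ (of the shape $\Spf$ of quotients of $\bZ_\ell\langle x\rangle[[y]]$, not topologically of finite type). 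One then takes Berthelot's generic fibre, whose admissible exhaustion is by affinoid loci of the form $\{|f_j|\le|\ell|^{1/k}\}$ rather than by residue tubes. With that replacement, the remaining steps of your argument (boundedness of the image of a compact group, conjugation into $\GLn(A^\circ)$) do go through.
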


By the results of the previous \S, the \'etale fundamental group $\pi_1^\et(X)$ is almost never topologically finitely generated
in the non-proper case. For this reason, we cannot expect the functor $\LocSysfr(G_X) $ to be representable by an object in the category $\An_{\Q_\ell}$ of $\Q_\ell$-analytic spaces.
Nevertheless,
we can prove an analogue of \cref{me1} if we consider instead certain subfunctors of $\LocSysfr$. More specifically, given a finite
quotient $q \colon \wildpi(X) \to \Gamma$ we can consider the moduli parametrizing continuous $\ell$-adic representations of $\pi_1^\et(X)$
whose restriction to $\wildpi(X)$ factors through $\Gamma$:

\begin{construction} \label{const1}
Let $q \colon \mathrm \wildpi(X) \to \Gamma$ denote a surjective continuous group homomorphism, whose target is a finite group (equipped with the discrete topology). We define the functor of \emph{continuous group homomorphisms $\pi_1^\et(X)$ to $\GLn(-)$
with $\Gamma$-bounded ramification at infinity}, as the fiber product
	\begin{equation} \label{eq_def}
		\abLocSys^{\mathrm{framed}}(\pi_1^\et(X)): =  \LocSys^{\mathrm{framed}}(\pi_1^\et(X))\times_{  	
		\LocSys^{\mathrm{framed}}(\mathrm \wildpi(X))	}	\LocSys^{\mathrm{frame}}(\Gamma),
	\end{equation}
computed in the category $\Fun \left( \Afd^{\op}, \mathrm{Set} \right)$.
\end{construction}

\begin{rema}
The moduli functor $\abLocSysfr(X)$ introduced in \cref{const1} depends on the choice of the continuous surjective homomorphism $q \colon
\mathrm P_X \to \Gamma$. However, for notational convenience we drop the subscript $q$.
\end{rema}

We have the following result:

\begin{theorem} \label{hom_loc}
The functor $\abLocSysfr(X)$ is representable by a $\Q_{\ell}$-analytic stack.
\end{theorem}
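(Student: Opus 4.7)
The plan is to exhibit a topologically finitely generated profinite group $G$ such that $\abLocSysfr(X) \simeq \LocSysfr(G)$, at which point \cref{me1} yields representability immediately. Concretely, set $H \coloneqq \ker(q) \subseteq \wildpi(X)$ and let $N$ denote the normal closure of $H$ inside $\pi_1^\et(X)$. Since $\wildpi(X)$ is a normal subgroup of $\pi_1^\et(X)$ containing $H$, the normal closure $N$ is still contained in $\wildpi(X)$; on the other hand $N \supseteq H$, so $\wildpi(X)/N$ is a quotient of $\Gamma$, in particular finite. Setting $G \coloneqq \pi_1^\et(X)/N$, one obtains a short exact sequence of profinite groups
\[
1 \to \wildpi(X)/N \to G \to \tamepi(X) \to 1,
\]
exhibiting $G$ as an extension of the topologically finitely generated group $\tamepi(X)$ (see \cref{cadoret}) by a finite group. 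Hence $G$ itself is topologically finitely generated.

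Next I would identify the $A$-points of $\abLocSysfr(X)$ with those of $\LocSysfr(G)$. Given $A \in \Afdl$, unpacking the fiber product produces a continuous homomorphism $\rho \colon \pi_1^\et(X) \to \GLn(A)$ together with a continuous homomorphism $\sigma \colon \Gamma \to \GLn(A)$ satisfying $\rho_{\vert \wildpi(X)} = \sigma \circ q$. Since $q$ is surjective, $\sigma$ is uniquely determined by $\rho$, and the compatibility condition translates into the requirement that $\rho$ vanish on $H$; because $\rho$ is a group homomorphism, this in turn is equivalent to $\rho$ vanishing on the normal closure $N$. The universal property of the profinite quotient topology then puts such $\rho$ in natural bijection, functorial in $A$, with continuous homomorphisms $G \to \GLn(A)$, yielding the desired equivalence $\abLocSysfr(X) \simeq \LocSysfr(G)$ of functors $\Afdl \to \mathrm{Set}$.

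Applying \cref{me1} to the topologically finitely generated profinite group $G$ then produces a representing $\Q_\ell$-analytic space, which is a fortiori a $\Q_\ell$-analytic stack. The main point of the argument is the structural observation that bounding ramification at infinity through a finite quotient of $\wildpi(X)$ automatically forces the representation to factor through a profinite quotient of $\pi_1^\et(X)$ that retains topological finite generation; this is precisely where \cref{cadoret} is essential, and I expect it to be the only real obstacle. The remaining verifications — continuity under passage to and from the quotient, and naturality in $A$ — are formal, so nothing further beyond the initial group-theoretic reduction should be required.
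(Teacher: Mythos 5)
Your proof is correct, but it is organized differently from the paper's. The paper does not construct the quotient group $G$: instead it chooses a continuous homomorphism $p \colon \mathrm{F}^{[r]} \to \pi_1^{\et}(X)$ from a free profinite group on $r$ topological generators whose images generate both $\Gamma$ (viewed as a quotient of $\wildpi(X)$) and $\tamepi(X)$, and argues that restriction along $p$ realizes $\abLocSysfr(X)$ as a \emph{closed subfunctor} of $\LocSysfr(\mathrm{F}^{[r]})$; the latter is representable by \cite[Theorem 2.2.15]{me1}, and closed subfunctors of representable functors are representable. Your route instead identifies $\abLocSysfr(X)$ outright with $\LocSysfr(G)$ for the explicit topologically finitely generated quotient $G = \pi_1^{\et}(X)/N$, with $N$ the normal closure of $\ker q$, and then invokes \cref{me1} directly. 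What your version buys is a cleaner structural statement and the avoidance of the verification --- left implicit in the paper --- that restriction along $p$ really is a closed immersion of functors; what the paper's version buys is an explicit presentation of the representing object as a closed analytic subspace of the ``free'' representation space $X^{[r]}$, which is the form in which one actually computes. Both arguments rest on exactly the same inputs: finiteness of $\Gamma$ and topological finite generation of $\tamepi(X)$ via \cref{cadoret}. The one point you should make explicit is that $N$ must be taken to be the \emph{closed} normal closure of $\ker q$, so that $G$ is again a profinite group; this costs nothing, since the kernel of a continuous homomorphism into the Hausdorff group $\GLn(A)$ is automatically closed, and the closed normal closure is still contained in the closed normal subgroup $\wildpi(X)$, so that $\wildpi(X)/N$ remains a finite quotient of $\Gamma$.
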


\begin{proof} Let $r$ be a positive integer and denote $\mathrm F^{[r]}$ a free profinite group on $r$ topological generators.
The finite group $\Gamma$ and the quotient $G_X / \mathrm P_X $ are topologically of finite generation. Therefore, it is possible to choose
a continuous group homomorphism 
	\[
		p \colon \mathrm F^{[r]} \to \pi_1^\et(X),
	\]
such that the images $p(e_i)$, for $i = 1, \dots, r$, form a set of generators for $\Gamma$, seen as a quotient of $\wildpi(X)$, and for
$\tamepi(X) \cong \pi_1^\et(X) / \wildpi(X)$.
Restriction under $\varphi$ induces a closed immersion of functors 
	\[
		\abLocSysfr(G_X) \hookrightarrow \LocSysfr(\mathrm F^{[r]}).
	\]
Thanks to \cite[Theorem
2.2.15.]{me1}, the latter
is representable by a rigid $\Q_\ell$-analytic space, denoted $X^{[r]}$. It follows that $\abLocSysfr(G_X)$ is representable by a closed subspace of $X^{[r]}$, which proves the statement.
\end{proof}

\begin{defi} \label{const_1}
Let $\Psh \left( \Afdl
\right) \coloneqq \Fun \big( \Afdl^\op, \cS \big)$ denote the \infcat of $\cS$-valued preasheaves on $\Afdl$.
Consider the \'etale site $( \Afd, \tau_{\et})$. We define the \infcat of \emph{higher stacks} on $(\Afd, \tau_{\et})$, $\St \left( \Afd, \tau_{\et} \right),
$ as the full subcategory of $\Psh \left( \Afd \right)$ spanned by those pre-sheaves which satisfying \'etale hyper-descent, \cite[\S 7]{lurieHTT}.
\end{defi}

\begin{rema}
The inclusion functor $ \St \left( \Afdl, \tau_{\et} \right) \subseteq \Psh \left( \Afd \right)$ admits a left adjoint, which is a left localization functor.
For this reason, the \infcat $\St \left( \Afdl, \tau_{\et} \right)$ is a presentable \infcat. One can actually prove that $\St \big( \Afdl, \tau_\et \big)$
is the hypercompletion of the $\infty$-topos of \'etale sheaves on $\Afdl$, $\Shv_\et \big( \Afd \big)$.
\end{rema}

\begin{defi}
Consider the geometric context $(\dAfd, \tau_{\et}, \mathrm P_{\sm} )$, \cite[Definition 2.3.1]{me1}. Let $\St \left( \Afdl, \tau_{\et}, \mathrm \rmP_\sm \right)$ denote the full subcategory of $\St( \Afd, \tau_{\et})$
spanned by geometric stacks, \cite[Definition 2.3.2]{me1}. We will refer to an object $\cF \in  \St \left( \Afdl, \tau_{\et}, \mathrm \rmP_\sm \right)$ as the
a $\Q_\ell$-analytic stack and we refer to $\St \big( \Afdl, \tau_\et \big)$ as the \infcat of \emph{$\Q_\ell$-analytic stacks}.
\end{defi}

\begin{exem}
Let $\mathbf G$ be a group object in the $\infty$-category $\St \left( \Afd, \tau_{\et}, \mathrm P_{\sm} \right)$. Given a $\mathbf G$-equivariant object $\cF \in \St \left( \Afd, \tau_{\et}, P_{\sm} \right)^{\mathbf G}$ we denote $[\cF / \mathbf G ]$ the geometric
realization of the simplicial object
	\[
	\begin{tikzcd}
 		\cdots \arrow[r, shift left=0.5] \arrow[r, shift right=1.5]
		\arrow[r, shift left=1.5]
		\arrow[r, shift right=0.5]
		& \mathbf G^2 \times \cF \arrow[r, shift left] \arrow[r]
		\arrow[r, shift right]
		& \mathbf G \times \cF  \arrow[r, shift left=0.5] \arrow[r, shift right=0.5]
		& \cF	\end{tikzcd}
	\]
computed in the $\infty$-category $\St \big( \Afdl, \tau_{\et} \big)$. We refer to $[\cF / \mathbf G ]$ as the \emph{quotient stack objec}t of $\cF$ by $\mathbf G$. 
\end{exem}

\begin{lemma}{\cite[\S 2.3]{me1}.}
Suppose $ \mathbf G \in \St \left( \Afd, \tau_{\et}, P_{\sm} \right) $ is a smooth group object and $\cF $ is representable by a $\Q_{\ell}$-analytic space. Then the quotient stack object $[\cF/ \mathbf G]$ is representable by a geometric stack.
\end{lemma}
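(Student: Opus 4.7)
My plan is to verify directly the two defining conditions of a geometric stack in the sense of the geometric context $(\Afd_{\Q_\ell}, \tau_\et, \rmP_\sm)$ introduced earlier, namely that $[\cF / \mathbf G]$ admits a smooth atlas by a $\Q_\ell$-analytic space and has representable (by a geometric stack of one level lower) diagonal. This is the standard Toën--Vezzosi construction applied to an action groupoid, so I would merely specialize the general argument to our analytic setting.

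First I would construct the atlas. The canonical morphism $\cF \to [\cF/\mathbf G]$ coming from the $0$-simplices of the displayed simplicial diagram is an effective epimorphism in the $\infty$-topos $\Shv_\et(\Afdl)$, because by definition $[\cF/\mathbf G]$ is the geometric realization of a groupoid object whose object of $0$-simplices is $\cF$. To check smoothness I would identify the Čech nerve of this map with the original simplicial diagram; this yields the standard action-groupoid computation
\[
    \cF \times_{[\cF/\mathbf G]} \cF \simeq \mathbf G \times \cF,
\]
under which the two projections correspond to the action map and the projection $\mathrm{pr}_2 \colon \mathbf G \times \cF \to \cF$. Since $\mathbf G$ is smooth by hypothesis and smoothness in the geometric context is preserved under base change, $\mathrm{pr}_2$ is smooth, so $\cF \to [\cF/\mathbf G]$ is smooth as required.

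Next I would verify representability of the diagonal $\Delta \colon [\cF/\mathbf G] \to [\cF/\mathbf G] \times [\cF/\mathbf G]$. Pulling back $\Delta$ along the smooth surjection $\cF \times \cF \twoheadrightarrow [\cF/\mathbf G] \times [\cF/\mathbf G]$ produced by the previous step, one computes the resulting fiber product to be $\mathbf G \times \cF$, which is representable (as a geometric object) since $\mathbf G$ and $\cF$ both are. Because representability in $\St(\Afdl, \tau_\et, \rmP_\sm)$ satisfies smooth descent, the diagonal $\Delta$ is itself representable.

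I expect the main obstacle to lie not in any single computation but in justifying the Čech-nerve identification in the hypercomplete $\infty$-topos $\Shv_\et(\Afdl)$: one must argue that the colimit presentation of $[\cF/\mathbf G]$ is a groupoid object and that the Čech nerve of its realization recovers this groupoid (the higher-categorical analogue of the fact that a principal $\mathbf G$-bundle trivializes its own classifying cocycle). Once this is granted, smoothness of the atlas and representability of the diagonal reduce formally to smoothness of $\mathbf G$, representability of $\cF$, and stability of both properties under pullback, exactly as in [me1, \S 2.3].
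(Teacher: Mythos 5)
Your argument is correct and is the standard one: the paper itself gives no proof of this lemma, deferring entirely to \cite[\S 2.3]{me1}, and your atlas-plus-diagonal verification via the identification of the \v{C}ech nerve of $\cF \to [\cF/\mathbf G]$ with the action groupoid is precisely the argument that reference carries out. The one point you flag as the ``main obstacle'' is indeed the crux, but it is handled by the Giraud axiom that groupoid objects in an $\infty$-topos are effective (applicable here since $\St(\Afdl,\tau_\et)$ is a hypercomplete $\infty$-topos), so no gap remains.
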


\begin{rema}
The smooth group $\mathbf{GL}^\an_n \in \Anl$ acts by conjugation on the moduli functor $\LocSysfr$. 
\end{rema}

\begin{defi}
Let $\LocSys(X) \coloneqq [\LocSysfr(X) / \mathbf \GLn^{\an}]$ denote the \emph{moduli stack of rank $n$ $\ell$-adic pro-\'etale local systems on $X$}. Given a continuous surjective group homomorphism $q \colon \mathrm \wildpi(X) \to \Gamma$ whose
target is a finite group we define the substack of $\LocSys(X)$ spanned by rank $n$ $\ell$-adic pro-\'etale local systems on $X$ \emph{ramified at infinity by level $\Gamma$} as the fiber product
	\[
		\abLocSys \coloneqq \LocSys(X) \times_{\LocSys ( \wildpi(X)} \LocSys(\Gamma)
	\]
\end{defi}

\begin{theorem} \label{main1}
The moduli stack $\abLocSys(X)$ is representable by a $\Q_{\ell}$-analytic stack.
\end{theorem}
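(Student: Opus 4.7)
The plan is to realize $\abLocSys(X)$ as an explicit quotient stack of the framed moduli $\abLocSysfr(X)$ by the conjugation action of $\mathbf{GL}_n^{\an}$, and then combine \cref{hom_loc} with the lemma that says smooth quotients of analytic spaces are geometric stacks.

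First, I would unfold the definition
\[
    \abLocSys(X) \coloneqq \LocSys(X) \times_{\LocSys(\wildpi(X))} \LocSys(\Gamma),
\]
and rewrite each factor as the quotient stack $[\LocSysfr(-)/\mathbf{GL}_n^{\an}]$ coming from the simultaneous conjugation action of $\mathbf{GL}_n^{\an}$. The conjugation action is compatible with the restriction morphisms $\LocSysfr(\pi_1^\et(X)) \to \LocSysfr(\wildpi(X))$ and $\LocSysfr(\Gamma) \to \LocSysfr(\wildpi(X))$, so we obtain a $\mathbf{GL}_n^{\an}$-equivariant structure on the fiber product
\[
    \abLocSysfr(X) = \LocSysfr(\pi_1^\et(X)) \times_{\LocSysfr(\wildpi(X))} \LocSysfr(\Gamma),
\]
computed in the \infcat $\St(\Afdl, \tau_\et)$. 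Since geometric realisations (and in particular stack quotients by a fixed group) commute with limits indexed by a fixed group action, I would deduce the identification
\[
    \abLocSys(X) \simeq \big[ \abLocSysfr(X) / \mathbf{GL}_n^{\an} \big]
\]
in $\St(\Afdl, \tau_\et)$.

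Next, I would invoke \cref{hom_loc} to conclude that $\abLocSysfr(X)$ is representable by a $\Q_\ell$-analytic space. Since $\mathbf{GL}_n^{\an}$ is a smooth analytic group object acting on this $\Q_\ell$-analytic space, the lemma preceding the theorem yields that the quotient stack $[\abLocSysfr(X)/\mathbf{GL}_n^{\an}]$ is a geometric stack in $\St(\Afdl, \tau_\et, \rmP_\sm)$, hence a $\Q_\ell$-analytic stack in the sense adopted in the text. Putting the two statements together finishes the proof.

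The one subtle point — and the main thing to verify carefully — is that forming $[-/\mathbf{GL}_n^{\an}]$ commutes with the fiber product used to cut out the $\Gamma$-bounded locus. This reduces to observing that the three framed moduli entering the fiber product carry the \emph{same} conjugation action of $\mathbf{GL}_n^{\an}$ and that the restriction maps are strictly equivariant, so the diagram of quotient stacks lifts to a $\mathbf{GL}_n^{\an}$-equivariant diagram of analytic spaces whose fiber product is precisely $\abLocSysfr(X)$. Once this compatibility is recorded, the representability follows formally from the two ingredients cited above.
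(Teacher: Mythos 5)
Your proposal is correct and follows essentially the same route as the paper, which simply observes that the canonical map $\abLocSysfr(X) \to \abLocSys(X)$ is a smooth atlas and then invokes the formal argument of \cite[\S 2.3]{me1} (i.e.\ \cref{hom_loc} plus the lemma on quotients by smooth group objects). Your extra verification that $[-/\mathbf{GL}_n^{\an}]$ commutes with the equivariant fiber product defining the $\Gamma$-bounded locus is a genuine point the paper leaves implicit; just note that the correct justification is not that colimits commute with limits in general, but the standard descent/torsor argument identifying $\big[(\cF \times_{\cH} \cG)/\mathbf G\big]$ with $[\cF/\mathbf G] \times_{[\cH/\mathbf G]} [\cG/\mathbf G]$ for a fixed group $\mathbf G$ acting equivariantly.
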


\begin{proof}
We have a canonical map $ \abLocSys^{\mathrm{framed}}(G_X)\to \abLocSys(X)$, which exhibits the former as a smooth atlas of the latter. The result now follows formally, as explained in \cite[\S 2.3]{me1}.
\end{proof}

One can prove that there is an equivalence between the space of continuous representations
	\[
		\rho \colon \pi_1^\et(X) \to \anGLn(A), \quad A \in \Afdl
	\]
and the space of rank $n$ pro-\'etale $A$-local systems on $X$. We thus have the following statement:

\begin{prop}{\cite[Corollary 3.2.5]{me1}}
The functor $\LocSys(X)$ parametrizes pro-\'etale local systems of rank $n$ on $X$.
\end{prop}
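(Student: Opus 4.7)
The plan is to establish an equivalence of stacks between $\LocSys(X)$ and the $\cS$-valued functor $\LocSys^{\proet}(X)$ sending $A \in \Afdl$ to the groupoid of rank $n$ pro-\'etale $A$-local systems on $X$. Since both functors satisfy \'etale hyperdescent on $\Afdl$---the right-hand side by the sheaf nature of pro-\'etale local systems and the left-hand side by construction as a geometric quotient stack---it suffices to produce a natural equivalence on $A$-points, functorially in $A \in \Afdl$, and then promote it to an equivalence of stacks by descent.

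First I would unpack the definition of $\LocSys(X)$: by construction, $\LocSys(X)(A)$ is the groupoid quotient of $\Hom_{\cont}(\pi_1^\et(X), \anGLn(A))$ by the conjugation action of $\anGLn(A)$, where the topology on $\anGLn(A)$ is the non-archimedean topology inherited from the affinoid $A$. On the other side, a rank $n$ pro-\'etale $A$-local system is, in the sense of Bhatt--Scholze, a locally free module of rank $n$ on the pro-\'etale site $X_{\proet}$ over the constant sheaf of topological rings $\widehat{A}_X$ associated to $A$.

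The heart of the argument is the continuous analogue of the classical fundamental group--local system correspondence. Namely, pulling back the tautological rank $n$ pro-\'etale $\widehat{A}_X$-module along a continuous representation $\rho \colon \pi_1^\et(X) \to \anGLn(A)$ produces a pro-\'etale $A$-local system of rank $n$ on $X$; conversely, any such local system becomes trivial after pullback to the universal pro-\'etale cover based at the geometric point $\overline{x}$, and the resulting monodromy is a continuous representation of $\pi_1^\et(X)$ valued in $\anGLn(A)$. A framing of the local system at $\overline{x}$ corresponds precisely to a choice of identification of the fiber with $A^n$, so that dividing by the conjugation action of $\anGLn(A)$ produces the desired groupoid of unframed pro-\'etale local systems and recovers the $A$-points of $[\LocSysfr(X)/\anGLn]$.

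The main obstacle, and the key technical point one must be careful about, is that the topology on $\anGLn(A)$ inherited from the affinoid $A$ must match the topology implicit in the Bhatt--Scholze pro-\'etale formalism attached to $\widehat{A}_X$, so that continuity on the representation side corresponds exactly to pro-\'etale descent on the sheaf side. This compatibility was worked out in \cite{me1}, and with it in hand the naturality in $A$ and the extension from $A$-points to a stacky statement become formal, completing the identification.
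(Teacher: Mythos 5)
The paper offers no argument of its own here: its proof is the single line ``The same proof of \cite[Corollary 3.2.5]{me1} applies,'' and your proposal is a faithful outline of exactly that argument (the continuous monodromy correspondence between representations of $\pi_1^\et(X)$ and pro-\'etale local systems, with the framed/unframed passage via the conjugation quotient and the topological compatibility deferred to \cite{me1}). So your proposal is correct and takes essentially the same approach, just spelled out in more detail than the paper bothers to.
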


\begin{proof}
The same proof of \cite[Corollary 3.2.5]{me1} applies.
\end{proof}

\section{Derived structure} Let $X$ be a smooth scheme over an algebraically closed field $k$ and fix a finite quotient $q \colon \wildpi(X) \to \Gamma$.
In this \S we will study at full the deformation theory of both the $\Q_\ell$-analytic moduli stacks $\LocSys(X)$ and $\abLocSys(X)$. Our goal is to show that $\LocSys(X)$ and
$\abLocSys(X)$ can be naturally promoted to \emph{derived $ \Q_{\ell}$-stacks}, denoted $\dLocSys(X)$ and $\abdLocSys(X)$, respectively. 
Therefore the corresponding $0$-truncations $\trun_{\leq 0} \dLocSys(X)$
and $\trun_{\leq 0} \abdLocSys(X)$ are equivalent to $\LocSys(X)$ and $\abLocSys(X)$, respectively. We will prove moreover that both $\abdLocSys(X)$ and $\LocSys(X)$ admit tangent complexes and give a precise formula for these. Moreover, we show that
the substack $\abdLocSys(X)$ is geometric with respect to the geometric context $\big( \dAfdl, \tau_\et, \rmP_\sm \big)$. In particular, $\abdLocSys(X)$ admits a cotangent complex which we can understand at full.

We compute the corresponding cotangent complexes and analyze some consequences of the existence of derived structures on theses objects.
We will use extensively the language of derived $\Q_{\ell}$-analytic geometry as developed in \cite{porta_der, porta_rep}.

\subsection{Derived enhancement of $\LocSys(X)$} Recall the \infcat of derived $\Ql$-affinoid spaces $\dAfdl$ introduced in \cite{porta_der}.
Given a derived $\Ql$-affinoid space $Z : = (\cZ, \cO_Z) \in \dAfdl$, we denote 
	\[
	\Gamma(Z) \coloneqq \Gamma \left(  \cO_Z^{\alg} \right) \in \CAlg_{\Q_{\ell}}
	\]
the corresponding derived ring of \emph{global sections on $Z$}, see \cite[Theorem 3.1]{porta_hom} for more details.
\cite[Theorem 4.4.10]{me2} implies that $\Gamma(Z)$ always admits a formal model, i.e., a $\ell$-complete derived
$\mathbb Z_{\ell}$-algebra $A_0 \in \CAlg_{\mathbb Z_\ell}$ such that $\left( \Spf A_0 \right)^{\mathrm{rig}} \simeq X$. Here $(-)^{\mathrm{rig}}$ denotes the rigidification functor from derived formal $\mathbb Z_\ell$-schemes to derived
$\Q_\ell$-analytic spaces, introduced in \cite[\S 4]{me2}. This allow us to prove:

\begin{prop}{\cite[Proposition 4.3.6]{me1}} \label{prop:enr}
The \infcat of perfect complexes on $A$, $\mathrm{Perf}(A)$, admits a natural structure of
$\ind \big( \pro \big( \cS \big) \big)$-enriched \infcat, i.e., it can be naturally upgraded to an object in the \infcat $\cE \Cat .$
\end{prop}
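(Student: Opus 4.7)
The plan is to use the formal model $A_0 \in \CAlg_{\mathbb Z_\ell}$ of $A$ (whose existence is granted by \cite[Theorem 4.4.10]{me2}) and the $\ell$-adic tower $\{A_0/\ell^n\}_n$ to produce a $\pro(\cS)$-enrichment on $\Perf(A_0)$; the desired $\ind(\pro(\cS))$-enrichment on $\Perf(A)$ will then arise by passing to the $\ind$-completion along the rigidification $A_0 \rightsquigarrow A_0^{\rig} = A$.

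First, I would observe that $A_0 \simeq \lim_n A_0/\ell^n$ because $A_0$ is $\ell$-complete. Consequently, for any two perfect complexes $M_0, N_0 \in \Perf(A_0)$ one has a natural equivalence
\[
\Map_{\Perf(A_0)}(M_0,N_0) \simeq \lim_n \Map_{\Perf(A_0/\ell^n)}\!\big(M_0 \otimes_{A_0} A_0/\ell^n,\ N_0 \otimes_{A_0} A_0/\ell^n\big).
\]
This realizes the mapping space as the limit of a tower, and we define
\[
\underline{\Map}_{\Perf(A_0)}(M_0,N_0)\ :=\ \big\{\,\Map_{\Perf(A_0/\ell^n)}(M_0/\ell^n,\,N_0/\ell^n)\,\big\}_n\ \in\ \pro(\cS).
\]
Composition in each $\Perf(A_0/\ell^n)$ is strictly compatible with the tower transitions, so we obtain a $\pro(\cS)$-enriched refinement of $\Perf(A_0)$, functorial in $A_0$.

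Next, I would pass from $A_0$ to $A = A_0^{\rig}$. Every $M \in \Perf(A)$ admits formal models $M_0 \in \Perf(A_0)$ (i.e.\ objects with $M_0^{\rig} \simeq M$), and the $\infty$-category of such formal models is filtered, with morphisms $M_0 \to M_0'$ inducing equivalences after rigidification. For $M,N \in \Perf(A)$, I set
\[
\underline{\Map}_{\Perf(A)}(M,N)\ :=\ \colim_{(M_0,N_0)}\, \underline{\Map}_{\Perf(A_0)}(M_0,N_0)\ \in\ \ind\!\big(\pro(\cS)\big),
\]
where the filtered colimit is taken over compatible pairs of formal models. One then checks that the underlying space of this $\ind(\pro(\cS))$-object recovers $\Map_{\Perf(A)}(M,N)$ (on applying the canonical realization $\ind(\pro(\cS)) \to \cS$). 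Composition is inherited from the formal level and produces the structure of an object of $\cE\Cat$.

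The routine parts are the tower argument and the definition of composition; the main obstacle is verifying well-definedness of the $\ind(\pro(\cS))$-enrichment, namely that the construction is independent (up to canonical equivalence) of the choice of formal model $A_0$ of $A$, and compatible with base change. The key point here is that any two formal models are cofinally related by morphisms that become equivalences after rigidification, so the filtered colimit structure on $\ind(\pro(\cS))$ washes out these ambiguities, exactly as in the analogous argument of \cite[Proposition 4.3.6]{me1}. Once this is secured, functoriality of the construction $A \mapsto \Perf(A)^{\cE\Cat}$ in maps of derived $\Q_\ell$-affinoid spaces follows by the same formalism, since such maps admit compatible formal-model enhancements.
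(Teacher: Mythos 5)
Your proposal is correct and follows essentially the same route the paper intends: the statement is quoted from \cite{me1}, and the paragraph preceding it sets up exactly your mechanism, namely the existence of a formal model $A_0$ with $(\Spf A_0)^{\rig}\simeq Z$ via \cite{me2}, the $\pro(\cS)$-structure coming from the $\ell$-adic tower $\{A_0/\ell^n\}_n$ on mapping spaces of perfect complexes over the $\ell$-complete ring $A_0$, and the $\ind$-structure from passing to the rigidification by a filtered colimit over formal models. The points you flag as requiring care (coherence of the enrichment and independence of the choice of formal model) are precisely what is carried out in \cite[\S 4.3]{me1}, so no gap is introduced by your argument.
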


\begin{defi}
Let $Y \in  \ind \big( \pro \big( \cS \big) \big)$. We define its \emph{materialization} by the formula
	\[
		\mathrm{Mat} \left(\cX \right) := \Map_{\ind(\pro(\cS))} \left( *, \cX \right) \in \cS ,
	\]
where $* \in \ind(\pro(\cS))$ denotes the terminal object.
This formula is functorial. For this reason, we have a well defined, up to contractible indeterminacy functor, \emph{materialization functor} $\Mat \colon \ind(\pro(\cS)) \to \cS $.
\end{defi}

As a consequence of \cref{prop:enr}, there exists an object $\rmB \cEnd  (Z) \in  \ind \big( \pro \big( \cS \big) \big)$, functorial in $ Z \in \dAfdl$, such that its \emph{materalization} is equivalent to
	\begin{equation} \label{eq:BGLn}
		\mathrm{Mat} \left( \cEnd (Z)  \right)  \simeq \mathrm  \rmB \End(\Gamma(Z)^n) \in  \cS .
	\end{equation}
The right hand side of \eqref{eq:BGLn} denotes the usual Bar-construction applied to $\mathbb E_1$-monoid object $\End(\Gamma(Z))  \in \cS$. Moreover, given $Y \in  \ind \big( \pro (\cS ) \big) $
every continuous morphism
	\[
		 Y \to \rmB \cEnd (Z), \text{in }  \ind \big( \pro \big( \cS \big) \big) 
	\]
is such that its materialization factors as
	\[
		\Mat \left( Y \right) \to \rmB \GLn(\Gamma(Z)) \hookrightarrow \rmB \End(\Gamma(Z))
	\]
in the \infcat  $\cS$.
See \cite[\S 4.3 and \textsection
4.4]{me1} for more details.

\begin{defi}{\cite[Notation 3.6.1]{lurieDAGXIII}}
We shall denote $\Sh^{\et}(X)$ the \emph{\'etale shape of $X$} defined as the fundamental groupoid associated to the $\infty$-topos $\Shv_{\text{\'et}} \left( X \right)^\wedge$, of hyper-complete \'etale sheaves on $X$.
\end{defi}

\begin{defi}
Let $X$ be as above. We define the \emph{derived moduli stack of $\ell$-adic pro-\'etale local systems of rank $n$ on $X$} as the functor
	\[
		\dLocSys(X) \colon \mathrm{dAfd}_{\mathbb{Q}_\ell}^{\op} \to \cS, 
	\]
given informally on objects by the formula 
	\[
		 Z \in  \mathrm{dAfd}_{\mathbb{Q}_\ell}^{\op} \mapsto \lim_{n \geq 0 } \Map_{\mathrm{Mon}_{\mathbb E_1}(\mathcal{C})} \left( \mathrm{Sh}^{\et}(X), \rmB  \cEnd \left( \trun_{\leq n} (Z)  \right)  \right),
	\]
where $\trun_{\leq n} (Z)$ denotes the $n$-th truncation functor on derived $\Q_\ell$-affinoid spaces.
\end{defi}

\begin{notation}
Given $Z \in \dAfdl$ we sometimes prefer to employ the notation 
	\[
		\dLocSys(X)(\Gamma(Z)) \coloneqq \dLocSys(X)(Z).
	\]
Let $\rho \in \dLocSys(X)(\Gamma(Z))$, 
we refer to it as a \emph{continuous representation of }$\Sh^{\et}(X)$ \emph{with coefficients in $\Gamma(Z)$}.
\end{notation}

\begin{defi}
Let $Y  := \lim_m Y_m \in \pro \left( \cS \right)$. Given an integer $n \geq 0$, we define the \emph{$n$-truncation of $Y$} as
	\[
		\tau_{\leq n} \left(  Y \right) \coloneqq \lim_m \tau_{\leq n  } ( Y_m)  \in \pro( \cS_{\leq n} ),
	\]
i.e. we apply pointwise the truncation functor $\tau_{\leq n} \colon \cS \to \cS$ to the diagram defining $Y = \lim_m Y_m \in \pro ( \cS)$.
\end{defi}

$\ind(\pro(\cS))$

\begin{notation}
Let $\iota \colon \Afd \to \dAfdl$ denote the canonical inclusion functor. Denote by
	\[
		\mathrm t_{\leq 0} \left( \dLocSys(X) \right) := \dLocSys(X) \circ \iota,
	\]
the restriction of $\dLocSys(X)$ to $\Afdl$. 

Given $Z \in \Afd^{\op}$, the object $\rmB \cEnd(Z) \in  \ind \big( \pro \big( \cS \big) \big)  $ is $1$-truncated. As a consequence, we have an equivalence of mapping spaces:
	\[
		\Map_{\ind(\pro(\cS))} \big( \Sh^{\et} (X), \rmB \cEnd(Z) \big) \simeq \Map_{\ind(\pro(\cS))} \left( \tau_{\leq 1} \Sh^{\et} (X), \rmB \cEnd(Z)\right).
	\]
We have moreover an equivalence of profinite spaces $\tau_{\leq 1} \Sh^\et(X) \simeq \mathrm B \pi_1^\et(X)$. Given a continuous group homomorphism $\rho \colon \pi_1^\et(X) \to \GLn(A)$ we can associate, via the cobar construction performed in
the \infcat $\cT  \op_{\mathrm{na}}$, a well defined morphism
	\[
		\rmB \rho \colon \rmB \pi_1^\et(X) \to \rmB \cEnd(A),
	\]
in the \infcat $\ind(\pro(\cS))$. This construction provide us with a well defined, up to contractible indeterminacy,
	\[
		p_A \colon \LocSysfr (X)(A) \to \Map_{\ind(\pro(\cS)) } \left(\rmB \pi_1^\et(X), \rmB \cEnd(Z) \right). 
	\]
On the other hand, the morphisms $p_A$ assemble to provide a morphism of stacks
	\[
		p \colon \LocSysfr(X) \to \trun_{\leq 0} \dLocSys(X) .
	\]
\end{notation}

\begin{prop} \label{123}
The canonical morphism
	\[
		p \colon  \LocSysfr(X) \to  \mathrm t_{\leq 0} \dLocSys(X) ,
	\]
in the \infcat $\St \big( \Afdl, \tau_{\et} \big)$ which induces an equivalence of stacks
	\[
		\LocSys(X) \simeq \trun_{\leq 0} \dLocSys(X).
	\]
\end{prop}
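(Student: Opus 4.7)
The plan is to verify the equivalence pointwise on ordinary $\Q_\ell$-affinoids and then invoke \'etale descent. Fix $A \in \Afdl$. Unwinding definitions, the value $\trun_{\leq 0} \dLocSys(X)(A)$ is the mapping space
\[
\Map_{\ind(\pro(\cS))}\bigl(\Sh^\et(X),\, \rmB \cEnd(A)\bigr),
\]
where $\rmB \cEnd(A)$ is the enriched Bar-construction provided by \cref{prop:enr}. Since $\rmB \cEnd(A)$ is a delooping of an $\mathbb{E}_1$-monoid object, it is $1$-truncated, so the mapping space depends only on the $1$-truncation $\tau_{\leq 1}\Sh^\et(X) \simeq \rmB \pi_1^\et(X)$; this is exactly the equivalence already recorded in the notation preceding the statement.

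The next and main step is to identify $\Map_{\ind(\pro(\cS))}(\rmB \pi_1^\et(X), \rmB \cEnd(A))$ with the groupoid of continuous representations $\pi_1^\et(X) \to \GLn(A)$ modulo conjugation. Using \eqref{eq:BGLn} and the fact quoted afterward that any morphism $Y \to \rmB \cEnd(Z)$ in $\ind(\pro(\cS))$ factors on materializations through $\rmB \GLn(\Gamma(Z))$, I would show that the materialization functor induces an equivalence
\[
\Map_{\ind(\pro(\cS))}\bigl(\rmB \pi_1^\et(X),\, \rmB \cEnd(A)\bigr) \simeq \Map^{\mathrm{cont}}\bigl(\rmB \pi_1^\et(X),\, \rmB \GLn(A)\bigr),
\]
where the right-hand side is the classifying groupoid of continuous representations. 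Continuity is automatic because the source lives in $\pro(\cS^{\fc})$ and the profinite structure on $\pi_1^\et(X)$ is encoded in $\Sh^\et(X)$; naturality of the Bar--cobar adjunction from \cite[\S 4.3]{me1} shows this identification is compatible with the comparison map $p_A$.

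Third, I would compare with the quotient stack side. By definition, $\LocSys(X)(A) = [\LocSysfr(X)/\anGLn](A)$ is, after sheafifying with respect to $\tau_\et$, the $\infty$-groupoid of continuous homomorphisms $\pi_1^\et(X) \to \GLn(A)$ up to $\GLn(A)$-conjugation, which matches the target computed above. It remains to check that the comparison map $p$ intertwines the conjugation action with the automorphisms of objects of $\rmB \cEnd(A)$; this follows from functoriality of the cobar construction and the identification $\Aut_{\rmB \cEnd(A)}(\mathrm{pt}) \simeq \GLn(A)$ via materialization.

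Finally, both $\LocSys(X)$ and $\trun_{\leq 0} \dLocSys(X)$ are \'etale sheaves on $\Afdl$ (the latter because mapping spaces from $\Sh^\et(X)$ preserve limits in the target, and $A \mapsto \rmB \cEnd(A)$ satisfies \'etale descent), so the pointwise equivalence upgrades to an equivalence of stacks. The main obstacle is the second step: the careful identification of the enriched mapping space, which requires controlling the passage between the $\ind(\pro(\cS))$-enriched Bar--construction and the classical continuous group cohomology, as developed in \cite[\S 4.3--4.4]{me1}; once this is in place the remaining assembly is formal.
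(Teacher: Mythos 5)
Your proposal is correct and follows essentially the same route as the paper, which simply invokes \cite[Theorem 4.5.8]{me1}: the ingredients you use (the $1$-truncatedness of $\rmB\cEnd(A)$ for ordinary $A$, the reduction to $\tau_{\leq 1}\Sh^\et(X)\simeq \rmB\pi_1^\et(X)$, the identification of enriched mapping spaces with continuous representations via materialization, and \'etale descent to match the quotient stack) are exactly the ones the paper assembles in the Notation preceding the statement before deferring the remaining verification to \emph{loc.\ cit.} Your sketch is in effect an unpacking of that citation, and you correctly flag the identification of the $\ind(\pro(\cS))$-enriched mapping space with continuous group cohomology as the step carrying the real content.
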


\begin{proof}
The proof of \cite[Theorem 4.5.8]{me1} applies.
\end{proof}

\begin{notation}
Let $Z \coloneqq (\cZ, \cO_Z) \in \dAn$ denote a derived $\Ql$-analytic space and $M \in \Mod_{\cO_Z}$. In \cite[\S 5]{porta_rep} it was introduced the analytic square zero extension of $Z$ by $M$ as the derived $\Ql$-analytic space $Z[M] \coloneqq
(\cZ, \cO_Z \oplus M) \in \dAn$, where $\cO_Z \oplus M \coloneqq \Omega^\infty_\an \in \AnRing(\cZ)_{/ \cO_Z}$ denotes the trivial square zero extension of $\cO_Z$ by $M$. In this case, we have a natural composite
	\begin{equation} \label{an:triv_ext}
		\cO_Z \to \cO_Z \oplus M \to \cO_Z
	\end{equation}
in the \infcat $\AnRing(\cZ)_{/ \cO_Z}$ which is naturally equivalent to the identity on $\cO_Z$. We denote $p_{Z, M} \colon \cO_Z \oplus M \to \cO_Z$ the natural projection displayed in \eqref{an:triv_ext}
\end{notation}

\begin{defi} \label{tangent}
Let $Z \in \dAfdl^{\op}$ be a derived $\Q_\ell$-affinoid space. Let $\rho \in \dLocSys(X)(\cO_Z)$ be a continuous representation with values in $\cO_Z$.
The \emph{tangent complex} of $\dLocSys(X)$ at $\rho$ is defined as the fiber
	\[
		\mathbb T_{\dLocSys(X), \rho} \coloneqq \fib_{\rho} \left( p_{\cO_Z} \right)
	\]
where 
	\[ 
		p_{\cO_Z} \colon \dLocSys(X) ( \cO_Z \oplus^{\an} \cO_Z) \to \dLocSys(\cO_Z), 
	\]
is the morphism of stacks
induced from the canonical projection map $p_{\cO_Z, \cO_Z} \colon \cO_Z \oplus \cO_Z \to \cO_Z$.
\end{defi}

The derived stack $ \dLocSys$ is not, in general, representable as derived $\Q_\ell$-analytic stack, as this would entail the representability of its $0$-truncation. Nevertheless we can compute its tangent
complex explicitly:

\begin{lemma}{\cite[Proposition 4.4.9.]{me1}}
Let $\rho \in \dLocSys(X)(\cO_Z)$. We have a natural morphism
	\[
		\mathbb{T}_{ \dLocSys(X), \rho} \to C^*_{\emph{\et}}\left(X, \mathrm{Ad} \left( \rho \right) \right)[1] , 
	\]
which is an equivalence in the derived \infcat $\Mod_{\Gamma(Z)}$.
\end{lemma}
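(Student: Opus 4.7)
The plan is to follow the strategy of \cite[Proposition 4.4.9]{me1}; the absence of topological finite generation for $\pi_1^{\et}(X)$ in the non-proper case does not interfere with the formal computation, which proceeds in three steps.

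First, I would unwind the definitions. By the definition of the tangent complex and of $\dLocSys(X)$ as an $\ind(\pro(\cS))$-enriched mapping space out of $\Sh^{\et}(X)$, the tangent complex is (up to the controlled truncations built into the construction) the fiber at $\rho$
\begin{equation*}
\mathbb{T}_{\dLocSys(X),\rho} \simeq \fib_{\rho}\Bigl(\Map_{\ind(\pro(\cS))}\bigl(\Sh^{\et}(X),\rmB \cEnd(\cO_Z \oplus^{\an} \cO_Z)\bigr) \to \Map_{\ind(\pro(\cS))}\bigl(\Sh^{\et}(X),\rmB \cEnd(\cO_Z)\bigr)\Bigr).
\end{equation*}
Since mapping spaces commute with pullbacks in the target, this fiber can be rewritten as a mapping space into the pullback of $\rmB \cEnd(\cO_Z \oplus^{\an} \cO_Z) \to \rmB \cEnd(\cO_Z)$ along $\rho \colon \Sh^{\et}(X) \to \rmB \cEnd(\cO_Z)$.

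Second, I would exploit the splitting of the analytic trivial square-zero extension $p_{\cO_Z,\cO_Z}\colon \cO_Z \oplus^{\an} \cO_Z \to \cO_Z$. Applying $\cEnd$ produces a split extension of $\mathbb{E}_1$-monoid objects in $\ind(\pro(\cS))$, whose kernel is naturally identified with $\mathrm{Mat}_n(\cO_Z)$ equipped with its additive $\mathbb{E}_1$-structure and the conjugation action of $\cEnd(\cO_Z)$. Passing to classifying objects in $\ind(\pro(\cS))$ and using the splitting, the pullback along $\rho$ is identified with the twisted object $\Ad(\rho)[1]$, i.e.\ the adjoint representation $\rho \otimes \rho^{\vee}$ viewed as an $\Sh^{\et}(X)$-equivariant object of $\ind(\pro(\cS))$, shifted by one. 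Combining with Step 1 yields
\begin{equation*}
\mathbb{T}_{\dLocSys(X),\rho} \simeq \Map_{\ind(\pro(\cS))}\bigl(\Sh^{\et}(X),\,\Ad(\rho)[1]\bigr),
\end{equation*}
which by the general description of continuous \'etale cohomology as a mapping space out of the \'etale shape coincides with $C^*_{\et}(X, \Ad(\rho))[1]$ in $\Mod_{\Gamma(Z)}$, as required.

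The main obstacle lies in the second step, where one must verify that the semidirect product decomposition of $\cEnd(\cO_Z \oplus^{\an} \cO_Z)$ is compatible with the $\ind(\pro(\cS))$-enrichment of \cref{prop:enr}, so that the pullback of classifying objects interacts correctly with the profinite topology carried by $\Sh^{\et}(X)$ and materializes to the usual deformation-theoretic comparison. Once this enriched splitting is secured, the rest of the argument is formal and identical to the proper case treated in \cite{me1}; in particular, no finiteness property of $\pi_1^{\et}(X)$ is ever invoked at the level of the tangent complex.
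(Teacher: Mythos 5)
Your proposal is correct and is essentially the argument the paper invokes: the paper's proof simply defers to \cite[Proposition 4.4.9]{me1}, whose content is exactly the computation you reconstruct (fiber of the mapping space over the split analytic square-zero extension, identification of that fiber with the $\Sh^{\et}(X)$-equivariant object $\Ad(\rho)[1]$ via the semidirect-product splitting of $\cEnd(\cO_Z \oplus^{\an} \cO_Z)$, and the identification of the resulting mapping space with $C^*_{\et}(X,\Ad(\rho))[1]$). Your closing observation that no finiteness of $\pi_1^{\et}(X)$ enters at the level of the tangent complex is precisely why the citation carries over to the non-proper case.
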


\begin{proof}
The proof of \cite[Proposition 4.4.9]{me1} applies.
\end{proof}

\subsection{The bounded ramification case} In this \S we are going to define a natural derived enhancement of $\abLocSys(X)$ and prove its representability by a derived $\Ql$-analytic stack. Let $X$ be a smooth scheme over an algebraically closed field
$k$ of positive characteristic $p \neq \ell$.

\begin{defi}Consider the sub-site $X^{\tame}_{ \et}$ of the small \'etale site $X_{\et}$ spanned by those \'etale coverings $Y \to X$ satisfying condition (2) in \cref{}. We can form the $\infty$-topos $\Shv^{\tame}(X) \coloneqq \Shv \left( X^{\tame}_{\et} \right)$
of \emph{tamely ramified} \'etale sheaves on the Grothendieck site $X^{\tame}_{\et}$.
\end{defi}

Consider the inclusion of sites $ \iota \colon X^\tame_\et \hookrightarrow X_\et$, it induces a geometric morphism of $\infty$-topoi
	\begin{equation} \label{geo_m}
		g_* \colon \Shv_\et(X) \to \Shv^\tame_\et(X)
	\end{equation}
which is a right adjoint functor to the functor induced by precomposition with $\iota$.

\begin{lemma}
The geometric morphism of $\infty$-topoi $g_* \colon \Shv^\tame_{\emph{\et}}(X) \to \Shv_{\emph{\et}}(X)$ introduced in \eqref{geo_m} is fully faithful.
\end{lemma}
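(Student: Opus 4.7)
The plan is to show that the counit of the adjunction $g^* \dashv g_*$ is an equivalence, i.e.\ $g^* g_* \cF \simeq \cF$ for every $\cF \in \Shv^\tame_\et(X)$; this is the standard characterization of the right adjoint $g_*$ being fully faithful.

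The inclusion $\iota \colon X^\tame_\et \hookrightarrow X_\et$ is fully faithful as a functor of small $\infty$-categories, and every covering family of the sub-site $X^\tame_\et$ is, by construction, already a covering family in $X_\et$. Consequently, $g^*$ is literally the restriction along $\iota$ and preserves sheaves, while $g_*$ is obtained (at the sheaf level) by taking the presheaf right Kan extension along $\iota$ and then sheafifying.

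For any $Y \in X^\tame_\et$, the Yoneda lemma together with the adjunction $g^* \dashv g_*$ yields
\[
  (g^* g_* \cF)(Y) \simeq \Map_{\Shv_\et(X)}\bigl(y_\et(\iota Y),\, g_* \cF\bigr) \simeq \Map_{\Shv^\tame_\et(X)}\bigl(g^* y_\et(\iota Y),\, \cF\bigr),
\]
where $y_\et$ denotes the sheafified Yoneda embedding. The key step is then to identify $g^* y_\et(\iota Y) \simeq y_\tame(Y)$. At the presheaf level this is immediate from the full faithfulness of $\iota$, since $\Hom(\iota(-), \iota Y) \simeq \Hom(-, Y)$; to descend the identification to sheaves one uses that $g^*$ commutes with sheafification, which holds because $\iota$ is a continuous morphism of sites. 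Once this identification is in hand, Yoneda gives $\Map(y_\tame(Y), \cF) \simeq \cF(Y)$, so the composite above equals $\cF(Y)$, and this natural equivalence is precisely the desired $g^* g_* \cF \simeq \cF$.

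The main technical input is the commutation of restriction with sheafification in the identification $g^* y_\et(\iota Y) \simeq y_\tame(Y)$; although this is a formal consequence of $\iota$ being a continuous morphism of sites, the geometric content is exactly the definition of $X^\tame_\et$ as a sub-site of $X_\et$ whose coverings are honest étale coverings. No additional properties of the tame topology (such as density of $X^\tame_\et$ inside $X_\et$) are needed.
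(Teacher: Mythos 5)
Your proof is correct and follows essentially the same route as the paper's: both arguments show the counit of the adjunction is an equivalence by exploiting the full faithfulness of $\iota \colon X^\tame_\et \hookrightarrow X_\et$ together with the fact that the topology on the subsite is induced. The paper phrases the key step via the pointwise limit formula for the right Kan extension (the comma category has an initial object), while you phrase it via the Yoneda identification $g^* y_\et(\iota Y) \simeq y_\tame(Y)$; these are the same computation in different notation.
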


\begin{proof} As the Grothendieck topology on $X_\et^\tame$ is induced by the inclusion functor $\iota \colon X_\et^\tame \to X_\et$, it suffices to prove the corresponding statement for the \infcats of presheaves. More specifically,
the statement of the lemma is a consequence of the assertion that the left adjoint
	\[
		\iota^* \colon \Psh \left( X_{\et} \right) \to \Psh \left( X^{\tame}_{\et} \right),
	\]
given by precomposition along $\iota$,
admits a fully faithful right adjoint. The existence of a right adjoint for $\iota^*$, denoted $\iota_{*}$, follows by the Adjoint functor theorem. The required right adjoint is moreover computed by means of a right Kan extension along $
 \iota$. Let $Y \in X_{\et}^{\tame}$, we can consider $Y \in X_\et$ by means of the inclusion functor $\iota \colon X_\tame^\et \to X_\et$. The comma \infcat $\big( X^\tame_\et \big)_{Y/}$ admits an initial object, namely $Y$ itself. Let
 $\cC_Y \coloneqq \big( X_\et^\tame \big)_{Y / }.$ Given $\cF \in
 \Psh \big( X_\et^\tame \big)$ one can compute 
 	\begin{align*}
		\iota^* \iota_* \cF( Y) & \simeq   \\
		& \simeq \iota_* \cF(Y) \\
		& \simeq \iota^*  \lim_{V \in \cC_Y} \cF(V)  \\
		& \simeq \cF(Y)
	\end{align*}
In particular, the counit of the adjunction $\theta \colon \iota^* \circ \iota_* \to \mathrm{Id}$ is an equivalence. Reasoning formally we deduce that $\iota_*$ is fully faithful and therefore so it is $g_*$.
\end{proof}

\begin{defi}
Let $\Sh^{\tame}(X) \in \pro \left( \cS \right)$ denote the fundamental $\infty$-groupoid associated to the $\infty$-topos $\Shv(X_{\et}^{\tame} )$, which we refer to as the \emph{tame \'etale homotopy type of} $X$.
\end{defi}

\begin{rema} \label{tame_vs_et}
The fact that the geometric morphism $g_* \colon \Shv(X_{\et}^{\tame} ) \to \Shv(X_{\et})$ is fully faithful implies that the canonical morphism 
	\[
		\Sh^{\tame} (X) \to \Sh^{\et}(X)
	\]
induces an equivalence of profinite abelian groups $\pi_i \left( \Sh^{\tame} (X) \right) \simeq \pi_i \left( \Sh^{\et}(X) \right)$ for each $i>1.$
As a consequence one has a fiber sequence
	\[
		\mathrm B \pi_1^w(X) \to \Sh^{\et}(X) \to \Sh^{\tame}(X),
	\]
in the \infcat $\pro(\cS^\fc)$ of profinite spaces.
\end{rema}

\begin{defi}
The derived moduli stack of \emph{wild (pro)-\'etale rank $n$ $\ell$-local systems on $X$} is defined as the functor $\dLocSys^w(X) \colon  \dAfd^{\op} \to \cS$ given informally by the association
	\[
		Z \in \dAfdl^{\op} \mapsto \lim_{n \geq 0} \Map_{\ind(\pro(\cS))} \left( \mathrm B \pi_1^w(X), \mathbf{ \mathrm B \GLn} \left( \tau_{\leq n}\big( \Gamma(Z) \big) \right) \right) \in \cS.
	\]
\end{defi}

\begin{rema}
The functor $ \dLocSys^w(X)$ satisfies descent with respect to the \'etale site $( \dAfd, \tau_{\et})$, thus we can naturally consider $ \dLocSys^w(X)$ as an object of the $\infty$-category of \emph{derived stacks} $\dSt \left( \dAfd , \tau_{\et}, \right)$.
\end{rema}

Suppose now we have a surjective continuous group homomorphism $q \colon \wildpi(X) \to \Gamma$, where $\Gamma$ is a finite group. Such morphism induces a well defined morphism (up to contractible indeterminacy) 
	\[
		\mathrm B q \colon \mathrm B \pi_1^w ( X) \to \mathrm B \Gamma.
	\]
Precomposition along $\mathrm B q$ induces a morphism of derived moduli stacks $\mathrm B q^* \colon \dLocSys(\Gamma) \to \dLocSys^w(X)$. Where $\dLocSys(\Gamma) \colon  \dAfdl \to \cS$ is the functor informally defined by the association
	\[
		Z \in \dAfdl \mapsto \Map_{\ind(\pro(\cS))} \left( \mathrm B \Gamma,  \mathrm B \cEnd(Z)\right) .
	\]	

\begin{rema}
As $\mathrm B \Gamma \in \cS^{\fc} \subseteq \pro \left( \cS^{\fc} \right)$ it follows that, for each $Z \in d \Afdl$, one has a natural equivalence of mapping spaces
	\[
		\Map_{\ind(\pro(\cS))} \left( \mathrm B \Gamma,\rmB \cEnd(Z) \right) \simeq \Map_{\cS} \left( \mathrm B \Gamma,  \mathrm B \GLn(\cO_Z) \right).
	\]
Therefore the moduli stack $\dLocSys \left(\mathrm B \Gamma \right) $ is always representable by a derived $\Q_\ell$-analytic stack which is moreover equivalent to the analytification of the usual (algebraic) \emph{mapping stack}
$\underline{\mathbf{\mathrm{Map}} }\left( \mathrm B \Gamma, \mathrm B \GLn(-) \right) $. The latter is representable by an Artin stack, see \cite[Proposition 19.2.3.3.]{lurieSAG}.
\end{rema}

We can now give a reasonable definition of the moduli of local systems with bounded ramification at infinity:

\begin{defi}
The derived moduli stack of derived \'etale local systems on $X$ wtih \emph{$\Gamma$-bounded ramification at infinity} is defined as the fiber product
	\[
		\abdLocSys (X) : =  \dLocSys(X) \times_{ 	   \dLocSys^w (X)	}  	\dLocSys( \mathrm B \Gamma 	)
	\]
\end{defi}

\begin{prop}
Let $q \colon \pi_1^w(X) \to \Gamma$ be a surjective continuous group homomorphism whose target is finite. Then the $0$-truncation of $\abdLocSys(X) $ is naturally equivalent to $\abLocSys(X) $. In particular, the former is representable by a
$\Q_{\ell}$-analytic stack.
\end{prop}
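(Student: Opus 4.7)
The plan is to deduce the statement from \cref{123} together with the fact that truncation commutes with fiber products, and then invoke \cref{main1} for the representability.

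First, I would observe that the $0$-truncation functor $\trun_{\leq 0} \colon \dSt\big(\dAfdl, \tau_\et\big) \to \St\big(\Afdl, \tau_\et\big)$ is given by restriction along the inclusion $\iota \colon \Afdl \hookrightarrow \dAfdl$. As a restriction functor, it preserves all small limits, and in particular it preserves fiber products. Applying this to the definition of $\abdLocSys(X)$ yields a natural equivalence
\[
\trun_{\leq 0} \abdLocSys(X) \simeq \trun_{\leq 0}\dLocSys(X) \times_{\trun_{\leq 0}\dLocSys^w(X)} \trun_{\leq 0}\dLocSys(\rmB \Gamma).
\]

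Next, I would identify each factor. The equivalence $\trun_{\leq 0}\dLocSys(X) \simeq \LocSys(X)$ is exactly \cref{123}. The same proof strategy applies to identify $\trun_{\leq 0}\dLocSys^w(X) \simeq \LocSys(\wildpi(X))$, using that $\Sh^\et(X)$ restricted to its wild part corresponds (after $1$-truncation) to $\rmB \wildpi(X)$ in $\pro(\cS^\fc)$, together with the equivalence of mapping spaces into the $1$-truncated object $\rmB \cEnd(Z)$ for $Z \in \Afdl$. For the third factor, since $\rmB \Gamma \in \cS^\fc$ is already a finite object and ordinary $Z \in \Afdl$ makes $\rmB \cEnd(Z)$ $1$-truncated, the mapping space computes the set of continuous group homomorphisms $\Gamma \to \GLn(\cO(Z))$ modulo conjugation, which is precisely $\LocSys(\Gamma)(Z)$. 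After verifying that these identifications are compatible with the structure morphisms of the fiber product, one obtains a natural equivalence
\[
\trun_{\leq 0}\abdLocSys(X) \simeq \LocSys(X) \times_{\LocSys(\wildpi(X))} \LocSys(\Gamma) = \abLocSys(X).
\]

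Finally, the representability by a $\Q_\ell$-analytic stack is immediate from \cref{main1}. The only real subtlety in the argument is verifying that the canonical map $p$ constructed at the level of framed moduli in \cref{123} lifts coherently to the fiber-product description; this is a naturality check, the essential content of which is already handled by the proof of \cite[Theorem 4.5.8]{me1} since all of the derived moduli involved are defined uniformly through the $\cE\Cat$-enriched mapping space formalism. I expect this compatibility verification to be the main obstacle, but it is routine once one unpacks the constructions.
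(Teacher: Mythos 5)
Your proposal matches the paper's proof: the paper likewise reduces to the three factors of the defining fiber product (using that $0$-truncation, being restriction along $\Afdl \hookrightarrow \dAfdl$, commutes with limits) and handles each factor by the argument of \cref{123}, with representability then supplied by \cref{main1}. No discrepancies to report.
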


\begin{proof}
It suffices to prove the statement for the corresponding moduli associated to $\Sh^{\et}(X)$, $\mathrm B \pi_1^w(X)$ and $\mathrm B \Gamma$. Each of these three cases can be dealt as in \cref{123}.
\end{proof}

Similarly to the derived moduli stack $\dLocSys(X)$ we can compute the tangent complex of $\abdLocSys(X)$ explicitly. In order to do so, we will first need some preparations:

\begin{construction} \label{const:mod}
Let $Y \in \pro \left( \cS_{\geq 1}^{\fc} \right)$ be a \emph{$1$-connective} profinite space. Fix moreover a morphism
	\[
		c \colon * \to \cX,
	\]
in the \infcat $\pro \left( \cS^{\fc} \right)$. Notice that such choice is canonical up to contractible indeterminacy due to connectedness of $X$.

Let $\Perf \left( \Q_{\ell} \right)$ the \infcat of perfect $\Q_{\ell}$-modules. One can canonically enhance $\Perf(\Q_{\ell})$ to an object in the \infcat $\cE \Cat$ of $\ind(\pro(\cS))$-enriched \infcats. Consider the full subcategory	
	\[
		\Perf_{\ell} \left( Y \right) \coloneqq \Fun_{\cont} \left( Y, \Perf(\Q_{\ell} ) \right)
	\]
of $\Fun \left( \Mat \left( Y \right) , \Perf( \Q_{\ell} ) \right)$ spanned by those functors $F \colon Y \to \Perf( \Q_{\ell})$ with $M \coloneqq F(*)$ such that the induced morphism
	\begin{equation} \label{eq:cont}
		\Omega \Mat \left(  \cX \right) \to \End \left( M \right)
	\end{equation}
is equivalent to the materialization of a continuous morphism
	\[
		\Omega \cX \to \cEnd \left( M \right)
	\]
in the \infcat $\ind(\pro(\cS))$. Thanks to \cite[Corollary 4.3.23]{me1} the \infcat
$\Perf_\ell(\cX)$ is an idempotent complete stable $\Q_{\ell}$-linear \infcat which
admits a symmetric monoidal structure given by point-wise tensor product.

Consider the \emph{ind-completion} $\Mod_{\Q_{\ell}}(\cX) \coloneqq \ind \left(\Perf_\ell(\cX)  \right)$, which is a presentable stable symmetric monoidal
$\Q_\ell$-linear \infcat, \cite[Corollary 4.3.25]{me1}. We have a canonical functor $p_{\ell} ( \cX ) \colon \Mod_{\Q_\ell} (\cX) \to \Mod_{\Q_\ell}$ given informally by the formula
	\[
		\colim_i F_\in \Mod_{\Q_\ell} \left( Y \right) \mapsto \colim_i \left( F_i(*)  \right) \in \Mod_{\Q_\ell}.
	\]
Given $Z \coloneqq (\cZ, \cO_Z) \in \dAfdl$ a derived $\Q_{\ell}$-affinoid space, we denote
$\Gamma(Z) := \Gamma \left( Z \right)$ the corresponding derived ring of global sections. Consider the extension of scalars \infcat 
	\[
		\Mod_{\Gamma(Z)} \left( Y \right) := \Mod_{\Q_{\ell}} \left( Y \right) \otimes_{\Q_\ell} \Gamma(Z),
	\]
which is a presentable stable symmetric
monoidal $\Gamma(Z)$-linear \infcat, \cite[Corollary 4.3.25]{me1}. We can base change $p_\ell (Y)$ to a well defined  (up to contractible indeterminacy) functor $ p_{\Gamma(Z)} \left( Y \right)  \colon \Mod_{\Gamma(Z)} \left( Y \right) \to \Mod_{\Gamma(Z)}$ given informally by the
association
	\[
		\left( \colim_i F_i \right) \otimes_{\Q_{\ell}} \Gamma(Z) \in \Mod_{\Gamma(Z)} \left( \cX \right) \mapsto \colim_i \left( F_i(*) \otimes_{\Q_\ell} \Gamma(Z) \right) \in \Mod_{\Gamma(Z)}.
	\]
\end{construction}

\begin{prop} \label{tang_comp}
Let $Z \in \dAfd$ be a derived $\Ql$-affinoid space and $\rho \in \abdLocSys( X)(\cO_Z)$. The inclusion morphism of stacks 
	\[
	 	\abdLocSys(X) \hookrightarrow \dLocSys(X)
	\]
induces a natural morphism at the corresponding tangent complexes at $\rho$
	\[
		\mathbb T_{\abdLocSys,  \rho} \to \mathbb T_{\dLocSys,  \rho}
	\]
is an equivalence in the \infcat $\Mod_{\Gamma(Z)}$. In particular, we have an equivalence of $\Gamma(Z)$-modules
	\[
		\mathbb T_{\abdLocSys, \  \rho} \simeq C^*_{\emph{\et}} \left( X , \Ad  \left( \rho  \right)  \right) [1] \in \Mod_{\Gamma(Z)}.
	\]
\end{prop}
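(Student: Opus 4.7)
The plan is to exploit the description of $\abdLocSys(X)$ as the derived fiber product $\dLocSys(X) \times_{\dLocSys^w(X)} \dLocSys(\mathrm{B}\Gamma)$ and reduce the claim to a purely cohomological statement. Since the formation of tangent complexes commutes with finite limits of derived stacks, one obtains a natural equivalence
\begin{equation*}
\mathbb{T}_{\abdLocSys(X), \rho} \simeq \mathbb{T}_{\dLocSys(X), \rho} \times_{\mathbb{T}_{\dLocSys^w(X), \rho|_{\wildpi(X)}}} \mathbb{T}_{\dLocSys(\mathrm{B}\Gamma), \rho|_{\Gamma}}
\end{equation*}
in $\Mod_{\Gamma(Z)}$. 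Therefore, it suffices to show that the restriction morphism $\mathbb{T}_{\dLocSys(\mathrm{B}\Gamma), \rho|_{\Gamma}} \to \mathbb{T}_{\dLocSys^w(X), \rho|_{\wildpi(X)}}$ is already an equivalence: this forces the fiber product to collapse onto its first factor and thereby yields the required equivalence $\mathbb{T}_{\abdLocSys, \rho} \simeq \mathbb{T}_{\dLocSys, \rho}$.

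The same method used in the preceding lemma to identify $\mathbb{T}_{\dLocSys(X), \rho}$ with $C^*_{\et}(X, \Ad(\rho))[1]$, namely analyzing deformations of $\rho$ along the analytic trivial square-zero extension $\cO_Z \oplus \cO_Z$, applies verbatim to the auxiliary moduli stacks $\dLocSys^w(X)$ and $\dLocSys(\mathrm{B}\Gamma)$. This produces natural identifications
\begin{equation*}
\mathbb{T}_{\dLocSys^w(X), \rho|_{\wildpi}} \simeq C^*_{\cont}\bigl(\wildpi(X), \Ad(\rho)\bigr)[1], \quad \mathbb{T}_{\dLocSys(\mathrm{B}\Gamma), \rho|_{\Gamma}} \simeq C^*\bigl(\Gamma, \Ad(\rho)\bigr)[1],
\end{equation*}
under which the restriction map corresponds to inflation along the surjection $q \colon \wildpi(X) \twoheadrightarrow \Gamma$.

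The heart of the proof, which I view as the main technical obstacle, is showing that this inflation is an equivalence of cochain complexes. Setting $K \coloneqq \ker(q)$, a closed subgroup of the pro-$p$ group $\wildpi(X)$ and hence itself pro-$p$, I would first establish that $H^i_{\cont}(K, \Ad(\rho)) = 0$ for $i > 0$: since $K$ acts trivially on $\Ad(\rho)$ and $\ell \neq p$, one computes this cohomology as the colimit of $H^i(K/N, \Ad(\rho))$ over open normal subgroups $N$ of $K$, and each quotient $K/N$ is a finite $p$-group whose order is invertible in $\Gamma(Z)$. The Hochschild--Serre spectral sequence for $K \hookrightarrow \wildpi(X) \twoheadrightarrow \Gamma$ then degenerates to yield a quasi-isomorphism $C^*_{\cont}(\wildpi(X), \Ad(\rho)) \simeq C^*(\Gamma, \Ad(\rho))$. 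The delicate step is promoting this to an actual equivalence in the derived setting, rather than merely an isomorphism on cohomology, which I expect to follow from the $\ind(\pro(\cS))$-enriched formalism recalled at the beginning of Section 3. Combined with the formula $\mathbb{T}_{\dLocSys(X), \rho} \simeq C^*_{\et}(X, \Ad(\rho))[1]$ from the preceding lemma, this yields the stated identification $\mathbb{T}_{\abdLocSys, \rho} \simeq C^*_{\et}(X, \Ad(\rho))[1]$.
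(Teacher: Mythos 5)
Your proposal is correct and its global strategy coincides with the paper's: both use the defining fiber product and the commutation of tangent complexes with limits to reduce the claim to showing that the restriction morphism $\mathbb{T}_{\dLocSys(\rmB\Gamma),\rho_\Gamma} \to \mathbb{T}_{\dLocSys^w(X),\rho\vert_{\wildpi(X)}}$ is an equivalence. The two arguments part ways at that key step. The paper forms the fiber sequence $\cY \to \rmB\wildpi(X) \to \rmB\Gamma$ of pointed profinite spaces, identifies $\Mod_A(\rmB\Gamma)$ with $\cY$-equivariant objects of $\Mod_A(\rmB\wildpi(X))$, and concludes that $\Pi_*(\Ad(\rho)) \simeq \Ad(\rho)_\Gamma$ because $\Ad(\rho)$ restricts trivially to $\cY$, the $\cY$-indexed limit computing homotopy fixed points having ``identity transition morphisms''. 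Your route --- identifying the two tangent complexes with $C^*_{\cont}(\wildpi(X),\Ad(\rho))[1]$ and $C^*(\Gamma,\Ad(\rho))[1]$ and proving that inflation is a quasi-isomorphism via the vanishing of $H^i_{\cont}(K,\Ad(\rho))$ for $i>0$ together with Hochschild--Serre --- is more classical and arguably more honest: homotopy fixed points of a trivial action compute $\Map(\cY,M)$ rather than $M$, so the collapse asserted in the paper is exactly the cohomological vanishing you prove, and your version makes explicit the arithmetic input ($\ell \neq p$ and the prime-to-$\ell$ orders of the finite quotients of $K$) on which the paper's step silently relies. Two caveats to tighten your write-up: the paper records that $\wildpi$ is pro-$p$ only for curves, so for higher-dimensional $X$ you should either justify that assertion or observe that all you really need is that every finite continuous quotient of $K=\ker(q)$ has order prime to $\ell$ (compare the discussion of $\rho(P_{K,\ell}) \cap N_1 = \{1\}$ in \S 2.1); and the promotion of your quasi-isomorphism to an equivalence in the enriched/derived setting, which you defer, is precisely what the identification of tangent complexes with mapping spaces in $\Mod_A(\rmB G)$ via \cite[Proposition 4.4.9]{me1} accomplishes, so the same citation the paper invokes closes that loop.
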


\begin{proof}
Let  $\Pi \coloneqq \mathrm{B} q \colon \mathrm B \wildpi ( X) \to \mathrm B \Gamma$ denote the morphism of profinite homotopy types induced from a continuous surjective group homomorphism $q \colon \wildpi(X) \to \Gamma$ whose target is finite.
We can form a fiber sequence
	\begin{equation} \label{fib}
		\cY \to \mathrm B \wildpi(X) \to \mathrm B \Gamma
	\end{equation}
in the $\infty$-category $\pro \left( \cS^{\mathrm{fc}}_{\geq 1} \right)_{*/}$ of pointed $1$-connective profinite spaces. Let $A \coloneqq \Gamma(Z)$ and
consider the \infcats $\Mod_A \left( \Sh^w( X) \right)$ and $\Mod_A \left( \mathrm B \Gamma \right)$ introduced in \cref{const:mod}. 
Let $\cC_{A, n} \left( \mathrm B \pi_1^w(X) \right)$ and $\cC_{A, n} \left( \mathrm B \Gamma \right)$ denote the full subcategories of $\Mod_A \left( \mathrm B \pi_1^w( X) \right)$ and $\Mod_A \left( \mathrm B \Gamma \right)$,
respectively, spanned by modules rank
$n$
free $A$-modules. It is a direct consequence of the definitions that one has an equivalence of spaces 
	\[
		\dLocSys \left( \mathrm B \pi_1^w(X) \right) \simeq \cC_{A, n} \left( \mathrm B \pi_1^w(X) \right)^{\simeq} \text{ and } \dLocSys \left( \mathrm B \Gamma \right) \simeq \cC_{A, n } \left( \mathrm B \Gamma \right)^{\simeq}
	\]
where $(-)^{\simeq}$ denotes the underlying $\infty$-groupoid functor.
The fiber sequence displayed in \eqref{fib} induces an equivalence of $\infty$-categories
	\begin{equation} \label{eq:cats}
		\Mod_A \left( \mathrm B \Gamma \right) \simeq \Mod_A \left( \mathrm B \pi_1^w(X) \right)^\cY
	\end{equation}
where the right hand side of \eqref{eq:cats} denotes the $\infty$-category of $\cY$-equivariant
continuous representations of $\mathrm B \pi_1^w(X)$ with $A$-coefficients. Thanks to \cite[Proposition 4.4.9.]{me1} we have an equivalence of $A$-modules
	\begin{equation} \label{eq:tangSw}
		\mathbb T_{\dLocSys \left( \mathrm B \pi_1^w(X) \right),  \ \rho_{|_{\mathrm B \pi_1^w(X)}}} \simeq \Map_{\Mod_{\Gamma(Z)} \left(\mathrm B \pi_1^w(X)\right) } \left( 1 ,  \rho_{|_{\mathrm B \pi_1^w(X)} }  \otimes \rho_{|_{\mathrm B \pi_1^w(X)}}^{\vee} \right) [1]
	\end{equation}
and similarly,
	\begin{equation}
		\mathbb T_{\dLocSys \left( \mathrm B \Gamma \right), \ \rho_{\Gamma}} \simeq \Map_{\Mod_{\Gamma(Z)} \left(\mathrm B \Gamma \right) } \left( 1 ,  \rho_{\Gamma} \otimes \rho_{\Gamma}^{\vee} \right) [1]
	\end{equation}
By definition of $\rho$, we have an equivalence $\rho^\cY \simeq \rho$, where $(-)^\cY$ denotes (homotopy) fixed points with respect to the morphism $\cY \to \mathrm B \pi_1^w(X)$. Thus we obtain a natural equivalence of $A$-modules:
	\begin{equation} \label{fixed}
		 \Map_{\Mod_{\Gamma(Z)} \left(\mathrm B \pi_1^w(X)\right) } \left( 1 ,  \rho \otimes \rho^{\vee} \right) [1] \simeq \Map_{\Mod_{\Gamma(Z)} \left(\mathrm B \pi_1^w(X) \right) } \left( 1 , (  \rho_{\Gamma} \otimes \rho_{\Gamma}^{\vee} )^\cY \right) [1].
	\end{equation}
Homotopy $\cY$-fixed points are computed by $\cY$-indexed limits.
As the $\cY$-indexed limit computing the right hand side of \eqref{fixed} has identity transition morphisms we conclude that the right hand side of \eqref{fixed} is naturally equivalent to the mapping space
	\begin{equation}
		 \Map_{\Mod_A \left(\mathrm B \pi_1^w(X) \right) } \left( 1 , (  \rho \otimes \rho^{\vee} )^\cY \right) [1] \simeq \Map_{\Mod_A \left(\mathrm B \Gamma \right) } \left( 1 , \Pi_*(  \rho \otimes \rho^{\vee} ) \right) [1]
	\end{equation}
where $\Pi_* \colon \Mod_A \left(\mathrm B \pi_1^w(X)\right)  \to \Mod_A \left( \mathrm B \Gamma \right) $ denotes a right adjoint to the forgetful $\Pi^* \colon \Mod_A \left( \mathrm B \Gamma \right) \to \Mod_A \left( \mathrm B \pi_1^w(X) \right)$. 
As a consequence we have an equivalence
 	\begin{equation}
		 \Map_{\Mod_A \left(\mathrm B \pi_1^w(X)\right) } \left(1 ,  \rho \otimes \rho^{\vee}  \right) [1] \simeq \Map_{ \Mod_A \left(\mathrm B \Gamma \right) } \left( 1 , \Pi_*(  \rho \otimes \rho^{\vee} ) \right) [1]
	\end{equation}
in the \infcat $\cS$. Notice that, by construction
	\begin{equation} \label{eq:gamma}
		\rho_{\Gamma} \otimes \rho_{\Gamma}^{\vee} \simeq \left( \rho \otimes \rho^{\vee} \right)_{\Gamma}
	\end{equation}
in the \infcat $\Mod_A \left( \mathrm B \Gamma \right)$. One has moreover equivalences 
	\begin{equation} \label{eq:comp}
		\Pi_* \left( \rho \otimes \rho^\vee \right)
		\simeq \left( \rho \otimes \rho^\vee \right)_{\Gamma},
	\end{equation}
as the restriction of $\rho \otimes \rho^\vee$ to $\cY$ is trivial. Thanks to \eqref{eq:tangSw} through \eqref{eq:comp}
we conclude that the canonical morphism $\LocSys \left( \mathrm B \Gamma \right) \to \LocSys \left( \mathrm B \pi_1^w(X) \right)$ induces an equivalence on tangent spaces, as desired.
\end{proof}

\begin{construction} \label{const:imp}
Fix a continuous surjective group homomorphism $q \colon \wildpi(X) \to \Gamma$, whose target is finite. Denote by $H$ the kernel of $q$. The profinite group $H$ is an open subgroup of $\wildpi(X)$. For this reason, there exists an open subgroup
$U \leq \pi_1^\et(X)$ such that $U \cap \wildpi(X) = H$. In particular, the subgroup $U$ has finite index in $\pi_1^\et(X)$. As finite \'etale coverings of $X$ are completely determined by finite continuous representations of
$\pi_1^\et(X)$, there exists a finite \'etale covering 
	\[
		f_U \colon Y_U \to X
	\]
such that $\pi_1^\et(X)$ acts on it canonically. Moreover, one has an isomorphism of profinite groups
	\[
		\pi_1^\et(Y) \cong U 
	\]
As a consequence, it follows that $\wildpi(Y_U) \cong H$. Given  $Z \in \Afdl$ and $\rho \in \abdLocSys(X) (\cO_Z)$ it follows by the construction of $f_U \colon
Y_U \to X$ that the restriction 
	\[
		\rho_{
		\vert \Sh^\et(Y)}
	\]
factors through $\Sh^\tame(Y)$. The morphism $f_U \colon Y_U \to X$ induces a morphism of profinite spaces
	\[
		\Sh^\et(Y) \to \Sh^\et(X),
	\]
which on the other hand induces a morphism of stacks $\dLocSys(X) \to \dLocSys(Y_U)$. Moreover, by the above considerations the composite
	\[
		\abdLocSys(X) \to \dLocSys(X) \to \dLocSys(Y),
	\]	
factors through the substack of tamely ramified local systems $\dLocSys \big(\Sh^\tame(Y_U) \big) \hookrightarrow \dLocSys(Y_U)$.
\end{construction}

\begin{lemma} \label{lem:imp}
The canonical restriction morphism of \cref{const:imp}
	\[
		\abdLocSys(X) \to \mathbf{RLocSys}_{\ell, n } (Y_U)
	\]
induces an equivalence
	\[
		\abdLocSys(X) \simeq  \mathbf{RLocSys}_{\ell, n } \big(\Sh^\emphet(Y_U) \big)^{\rmB \Gamma'}
	\]
of stacks.
\end{lemma}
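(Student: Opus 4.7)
The plan is to apply Galois descent along the finite \'etale Galois cover $f_U \colon Y_U \to X$ of \cref{const:imp}, whose Galois group is $\Gamma' \coloneqq \pi_1^\et(X)/U$. The short exact sequence $1 \to U \to \pi_1^\et(X) \to \Gamma' \to 1$ lifts to a fiber sequence
\[
  \Sh^\et(Y_U) \to \Sh^\et(X) \to \rmB \Gamma'
\]
in the \infcat $\pro(\cS^\fc)_{*/}$ of pointed profinite spaces.

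First I would upgrade the classical descent statement---that continuous representations of $\pi_1^\et(X)$ are $\Gamma'$-equivariant continuous representations of $\pi_1^\et(Y_U)$---to an equivalence of derived moduli stacks
\[
  \dLocSys(X) \simeq \dLocSys(Y_U)^{\rmB \Gamma'}
\]
in $\dSt(\dAfdl, \tau_\et)$. Concretely, this is obtained by applying the enriched mapping functor $\Map_{\ind(\pro(\cS))}(-, \rmB \cEnd(Z))$ to the fiber sequence above: since $\rmB \Gamma' \in \cS^\fc$ is a finite, truncated diagram shape, the resulting diagram assembles into the homotopy fixed-point diagram computing the right-hand side. The formal framework needed is supplied by \cref{prop:enr}, \cref{const:mod}, and the enriched mapping-space machinery of \cite[\S 4.3]{me1}.

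Next, under this equivalence, I would identify the substack $\abdLocSys(X) \subseteq \dLocSys(X)$ on the right-hand side. For $\rho \in \dLocSys(X)(\Gamma(Z))$ corresponding to a $\Gamma'$-equivariant $\sigma \coloneqq \rho\vert_U$, the $\Gamma$-bounded ramification condition---that $\rho\vert_{\wildpi(X)}$ factors through $q \colon \wildpi(X) \to \Gamma$---is equivalent to the requirement $\rho(H) = 1$, where $H \coloneqq \ker(q)$. By the choice of $U$ one has $H = U \cap \wildpi(X) = \wildpi(Y_U)$, so the condition is in turn equivalent to $\sigma$ being tame, i.e., to $\sigma$ factoring through $\Sh^\tame(Y_U)$ in the sense of \cref{tame_vs_et}. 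Combining this identification with the descent equivalence above yields the lemma.

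The main obstacle is the descent equivalence of the first step. Because neither $\pi_1^\et(X)$ nor $U = \pi_1^\et(Y_U)$ is topologically finitely generated in general, the descent must be performed entirely inside the \infcat of $\ind(\pro(\cS))$-enriched functors, and one has to verify that the homotopy colimit $\rmB \pi_1^\et(X) \simeq (\rmB U)_{h \Gamma'}$ is preserved by $\Map_{\ind(\pro(\cS))}(-, \rmB \cEnd(Z))$. This ultimately reduces to the compactness of $\rmB \Gamma' \in \cS^\fc$ in the ind-completion framework of \cite[Corollary 4.3.25]{me1}; once that verification is in place, the second step is essentially formal.
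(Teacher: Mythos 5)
Your proposal is correct and follows essentially the same route as the paper: Galois descent along $f_U$ giving $\dLocSys(X) \simeq \dLocSys(Y_U)^{\rmB \Gamma'}$, followed by the identification of the $\Gamma$-bounded ramification condition with tameness of the restriction to $Y_U$, which the paper packages as a pullback square against $\dLocSys\big(\Sh^\tame(Y_U)\big) \to \dLocSys(Y_U)$. Your closing remark about checking that the enriched mapping functor carries $\rmB \pi_1^\et(X) \simeq (\rmB U)_{h\Gamma'}$ to the homotopy fixed-point diagram is a point the paper leaves implicit under the phrase ``by Galois descent.''
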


\begin{proof}
By Galois descent, the restriction morphism along $f_U \colon Y_U \to X$ induces an equivalence of stacks
	\[
		\dLocSys(X) \simeq \dLocSys(Y_U)^{\rmB \Gamma'}.
	\]	
Moreover, the considerations of \cref{const:imp} imply that we have a pullback square
	\begin{equation} \label{pull}
	\begin{tikzcd}
		\abdLocSys(X) \ar{r} \ar{d} & \dLocSys(X) \ar{d} \\
		\dLocSys \big( \Sh^\tame(Y_U) \big) \ar{r} & \dLocSys(Y_U)
	\end{tikzcd}
	\end{equation}
in the \infcat $\dSt \big( \dAfdl, \tau_\et \big)$. The result now follows since we can identify \eqref{pull} with
	\[
	\begin{tikzcd}
		\dLocSys \big( \Sh^\tame(Y_U) \big)^{\rmB \Gamma'} \ar{r} \ar{d} & \dLocSys(Y_U)^{\rmB \Gamma'} \ar{d} \\
		\dLocSys \big( \Sh^\tame(Y_U ) \big) \ar{r} & \dLocSys(Y_U)
	\end{tikzcd}
	\]
in the \infcat $\dSt \big( \dAfdl, \tau_\et \big)$. 
\end{proof}

\begin{theorem}
The (derived) moduli stack $\abdLocSys(X)$ is representable by a derived $\Q_{\ell}$-analytic stack.
\end{theorem}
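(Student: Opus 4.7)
The plan is to apply \cref{lem:imp} to reduce the representability of $\abdLocSys(X)$ to that of $\mathbf{RLocSys}_{\ell,n}\big(\Sh^\tame(Y_U)\big)$, where $f_U\colon Y_U\to X$ is the finite \'etale cover produced in \cref{const:imp}, together with stability under taking $\rmB\Gamma'$-fixed points. The second reduction is immediate: since $\Gamma'$ is a finite group, $\rmB\Gamma'$-fixed points are computed by a finite limit in $\dSt\big(\dAfdl,\tau_\et\big)$, and the property of being a derived $\Q_\ell$-analytic stack is stable under finite limits.

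For the first reduction, the key observation is that by \cref{cadoret} the tame fundamental group $\tamepi(Y_U)$ is topologically finitely generated, so $\pi_1^\et(Y_U) \to \tamepi(Y_U)$ is the kind of quotient to which \cref{me1} applies. Consequently the ordinary moduli $\LocSys\big(\tamepi(Y_U)\big)$ is representable by a $\Q_\ell$-analytic stack via \cref{main1}. To promote this to a derived representability statement, I would apply the representability criterion of \cite[Theorem 7.1]{porta_rep}. The hypotheses to verify are threefold: first, representability of the $0$-truncation, which is settled by \cref{main1}; second, existence of a perfect analytic cotangent complex at every derived $\Ql$-affinoid point, for which \cref{tang_comp} supplies the formula $\bT_{\abdLocSys,\rho}\simeq C^*_\et(X,\Ad(\rho))[1]$; and third, infinitesimal cohesiveness together with nilcompleteness of the functor, which follow from the mapping-stack description of $\dLocSys(X)$ through $\Map_{\ind(\pro(\cS))}\bigl(\Sh^\et(X),\rmB\cEnd(-)\bigr)$ together with the evident compatibility of $\rmB\cEnd(-)$ with square-zero extensions of derived $\Ql$-affinoid spaces, as established in \cite{me1}.

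The technical heart of the argument, and the main obstacle, is verifying perfectness of the cotangent complex. Here the bounded-ramification condition is essential: although $\pi_1^\et(X)$ has no reasonable finiteness properties in general, the fact that $\rho$ has $\Gamma$-bounded ramification means that $\Ad(\rho)$ pulls back along $f_U$ to a tamely ramified $\ell$-adic local system on $Y_U$, by the construction of $Y_U$ in \cref{const:imp}. Since $Y_U$ is smooth over an algebraically closed field and $\tamepi(Y_U)$ is topologically finitely generated, the \'etale cohomology $C^*_\et(Y_U,\Ad(\rho))$ is perfect over $\Gamma(Z)$, and Galois descent along $f_U$ transfers this perfectness back to $C^*_\et(X,\Ad(\rho))$. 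Once perfectness of the cotangent complex is established, Porta's criterion applies and yields that $\abdLocSys(X)$ is representable by a derived $\Q_\ell$-analytic stack, as desired.
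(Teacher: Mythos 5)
Your proposal follows essentially the same route as the paper: both apply the representability criterion of \cite[Theorem 7.1]{porta_rep}, citing \cref{main1} for the $0$-truncation, \cref{tang_comp} together with finiteness of $\ell$-adic \'etale cohomology for a perfect (co)tangent complex, and \cref{lem:imp} to reduce the cotangent-complex question to the tame homotopy type of $Y_U$. The one imprecision is your claim that $\rmB\Gamma'$-fixed points are computed by a \emph{finite} limit (they are a totalization over $\rmB\Gamma'$, hence not finite), but this aside is dispensable since, exactly as in the paper, you end up verifying the Porta--Yu conditions for $\abdLocSys(X)$ directly.
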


\begin{proof}
Thanks to \cite[Theorem 7.1]{porta_rep} we need to check that the functor $\abdLocSys(X)$ has representable $0$-truncation, it admits a (global) cotangent complex and it is compatible with Postnikov towers. The representability of $t_0(\abdLocSys(X) ) \simeq
\abLocSys(X)$ follows from \cref{main1}. \cref{tang_comp} implies that $\abdLocSys(X)$ admits a global tangent complex. Moreover, by finiteness of $\ell$-adic cohomology for smooth varieties in characteristic $p \neq \ell$, \cite[Theorem 19.1]{milne_et} together
with \cite[Proposition 3.1.7]{me1} for each $\rho \in \abdLocSys(X)(Z)$, the tangent complex at $\rho$
	\[
		\bT_{\abdLocSys(X), \rho} \simeq C^*_{\et} \big( X, \Ad (\rho) \big) [1] \in \Mod_{\Gamma(Z)}
	\]
is a dualizable object of the derived \infcat $\Mod_{\Gamma(Z)}$. Thanks to \cref{lem:imp} we deduce that the existence of a cotangent complex is equivalent to the existence of a global cotangent complex for the derived moduli stack
	\[
		\dLocSys \big( \Sh^\tame(Y_U) \big) \in \dSt \big( \Afdl, \tau_\et \big).
	\]
We are thus reduced to show that $\Sh^\tame (Y) \in \pro ( \cS^\fc)$ is cohomologically perfect and cohomologically compact, see \cite[Definition 4.2.7]{me1} and \cite[Definition 4.3.17]{me1} for the definitions of these notions. As $Y_U$
is a smooth scheme over a field of characteristic $p \neq \ell$, cohomologically perfectness of $\Sh^\tame(Y_U)$ follows by finiteness of \'etale cohomology with $\ell$-adic coefficients, \cite[Theorem 19.1]{milne_et} together with \cite[Proposition 3.1.7]{me1}.
To show that $\Sh^\tame(Y)$ is cohomologically compact we pick a torsion $\bZ_\ell$-module $N$ which can be written as a filtered colimit $N \simeq \colim_{\alpha} N_{\alpha}$ of perfect $\bZ_\ell$-modules. As the tame fundamental group is topologically
of finite type and for each $i > 0 $, the stable homotopy groups $\pi_i \big( \Sh^\tame(Y_U) \big)^{\st}$ are finitely presented the result follows.
For these reasons, the derived moduli stack $\dLocSys \big( \Sh^\tame(Y_U) \big)$ admits a glocal cotangent complex. \cref{lem:imp} implies now that the same is true for $\abdLocSys(X)$. Compatibility with Postnikov towers of $\abdLocSys(X)$ follows from the
fact that the latter moduli is defined as a pullback of stacks compatible with Postnikov towers.
\end{proof}

\section{Comparison statements}

\subsection{Comparison with Mazur's deformation functor}
Let $L$ be a finite extension of $\Q_{\ell}$, $\cO_L$ its ring of integers and $\mathfrak l := \O_L / \mathfrak{m}_L$ its residue field. We denote $\CAlg_{/ \mathfrak l}^{\sm}$ the $\infty$-category of \emph{derived small $k$-algebras} augmented over $
\mathfrak l$.

Let $G$ be a profinite group and $\rho \colon G \to \GLn(L)$ a continuous $\ell$-adic representation of $G$. Up to conjugation, $\rho$ factors through $\GLn(\O_L ) \subseteq \GLn(L)$ and we can consider its corresponding
residual continuous $\mathfrak l$-representation
	\[
		\overline{\rho} \colon G \to \GLn(\fl).
	\]
The representation $\rho$ can the be obtained as the inverse limit of $\{ \overline{\rho}_n \colon G \to \GLn(\cO_L / \mathfrak{m}_L^{n+1}) \}_n$, where each $\overline{\rho}_n \simeq \rho \ \mathrm{mod } \ \mathfrak{m}^{n+1}$. For each $n \geq 0$,
$\overline{\rho}_n $ is a deformation of the residual representation $\overline{\rho}$ to the ring $\cO_L / \mathfrak m_L^{n+1}$.
Therefore, in order to understand continuous representations $\rho \colon G \to \GLn(L)$ one might hope to understand residual representations $\overline{\rho} \colon G \to \GLn(\fl)$ together with their corresponding
deformation theory. For this reason,
it is reasonable to consider the corresponding \emph{derived formal moduli problem}, see \cite[Definition 12.1.3.1]{lurieSAG}, associated to $\overline{\rho}$:
	\[
		\Def_{\overline{\rho}} \colon \CAlg^{\sm}_{/ \fl} \to \cS,
	\]
given informally via the formula
	\begin{equation} \label{deform}
		A \in \CAlg^{\sm}_{/ \fl} \mapsto \Map_{\ind(\pro(\cS))} 	 \left( \mathrm B G, \rmB \cEnd(A) \right) \times_{ \Map_{\ind(\pro(\cS))} \left( \mathrm B G,
		\rmB \cEnd(A) \right) } \{ \overline{\rho} \} \in \cS
		.
	\end{equation}

\begin{construction}
\cite[Proposition 4.2.6]{me1} and its proof imply that one has an equivalence between the tangent complex of $\Def_{\overline{\rho}}$ and the complex of continuous cochains of $\Ad ( \overline \rho)$
	\begin{equation} \label{cont_coh}
		\mathbb T_{\Def_{\overline{\rho}}} \simeq C^*_{\mathrm{cont}} \left( G, \Ad( \rho ) \right)[1]
	\end{equation}
in the $\infty$-category $\Mod_\fl$. 
Replacing $\mathrm B G$ in \eqref{deform} by \'etale homotopy type of $X$, $\Sh^{\et}(X)$, and $C^*_{\cont}$ by $C^*_{\et}$ in \eqref{cont_coh} it follows by \cite[Theorem 19.1]{milne_et} together with \cite[Theorem 6.2.5]{lurieDAGXII} that $
\Def_{\overline{\rho}}$ is \emph{pro-representable} by
a local Noetherian derived ring $A_{\overline{\rho}} \in \CAlg_{/ \fl}$ whose residue field is equivalent to $\fl$. Moreover, $A_{\overline{\rho}}$ is complete with respect to the augmentation ideal $\mathfrak{m}_{A_{\overline{\rho}}}$
(defined as the kernel of the homomorphism $\pi_0 \left(  A_{\overline{\rho}} \right) \to k$ of ordinary rings).
It follows that $A_{\overline{\rho}}$ admits a natural structure of a derived $W(\fl)$-algebra, where $W(\fl)$ denotes the ring of Witt vector of $\fl$.
As $\overline{\rho}$ admits deformations to $\cO_L$, for e.g. $\rho$ itself, we have that $\ell \neq 0 $ in $\pi_0(A_{\overline{\rho}})$.
\end{construction}

\begin{notation}
Denote by $L^\unr \coloneqq \Frac \left( W(\fl) \right)$ the field of fractions of $W(\fl)$. It corresponds to the maximal unramified extension of $\Q_\ell$ contained in $L$.
\end{notation}

\begin{prop}
Let $\mathrm t_{\leq 0} \left( \Def_{\overline{\rho}} \right)$ denote the $0$-truncation of the derived formal moduli problem $\Def_{\overline{\rho}}$, i.e. the restriction of $\Def_{\overline{\rho}}$ to the full subcategory of ordinary Artinian rings augmented over $\fl$,
$\CAlg_{/ \fl }^{\mathrm{sm},\heartsuit} \subseteq \CAlg_{/ \fl}^{\sm}$.
Then $\mathrm t_{\leq 0} \left( \Def_{\overline{\rho}} \right)$ is equivalent to Mazur's deformation functor introduced in \cite[Section 1.2]{mazurDG} and $\pi_0(A_{\overline{\rho}})$
is equivalent to Mazur's universal deformation ring.
\end{prop}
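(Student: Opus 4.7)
The plan is to unwind the definition of $\Def_{\overline{\rho}}$ on an ordinary Artinian ring $A \in \CAlg_{/\fl}^{\sm,\heartsuit}$ and match it term by term with Mazur's functor, then deduce the pro-representability statement formally from the universal property of $A_{\overline{\rho}}$.

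First, fix $A \in \CAlg_{/\fl}^{\sm,\heartsuit}$. Since $A$ is discrete, the object $\rmB \cEnd(A) \in \ind(\pro(\cS))$ is $1$-truncated, and one checks (as in \cite[\S 4.3--4.4]{me1}, invoked in the paper just after \eqref{eq:BGLn}) that its materialization is the usual classifying space $\rmB \End(A^n)$ of the monoid $\End(A^n)$. Furthermore, by the continuity criterion recalled after \eqref{eq:BGLn}, a morphism $\rmB G \to \rmB \cEnd(A)$ in $\ind(\pro(\cS))$ materializes precisely to a map of $\mathbb E_1$-spaces $G \to \GLn(A)$ which is continuous for the profinite topology on $G$ and the natural topology on $\GLn(A)$ (which is discrete when $A$ is finite over $\fl$). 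Taking connected components and $\pi_0$ of the resulting space yields the set of continuous representations $G \to \GLn(A)$ modulo conjugation by $\GLn(A)$.

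Next, I take the fiber over $\overline{\rho}$ in the defining fibre sequence \eqref{deform}. A point of the fiber is the datum of a continuous representation $\rho_A \colon G \to \GLn(A)$ together with an identification $\rho_A \otimes_A \fl \simeq \overline{\rho}$, and the automorphism group of such a point is the subgroup of $\GLn(A)$ which acts trivially on the reduction, i.e., the kernel $K_A \coloneqq \ker(\GLn(A) \to \GLn(\fl))$. Thus $\pi_0(\Def_{\overline{\rho}}(A))$ is exactly the set of continuous lifts of $\overline{\rho}$ to $\GLn(A)$ modulo strict conjugation by $K_A$, which is by definition Mazur's deformation functor $\Def^{\mathrm{Mazur}}_{\overline{\rho}}(A)$ from \cite[\S 1.2]{mazurDG}. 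The higher homotopy groups vanish on $A$ discrete because the defining fiber sequence forces $\pi_i$ to be controlled by continuous group cohomology in degrees $\geq 1$ and the $K_A$-stabilizer computation collapses to a set-valued functor on $\CAlg_{/\fl}^{\sm,\heartsuit}$.

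Finally, for the identification $\pi_0(A_{\overline{\rho}})$ with Mazur's universal deformation ring, I argue by universal properties. The pro-representability established above states that there is a natural equivalence $\Def_{\overline{\rho}}(B) \simeq \Map_{\CAlg_{/\fl}}(A_{\overline{\rho}}, B)$ for $B$ small. Restricting this equivalence to $B \in \CAlg_{/\fl}^{\sm,\heartsuit}$ and applying $\pi_0$, the left hand side becomes $\Def^{\mathrm{Mazur}}_{\overline{\rho}}(B)$ by the previous paragraph, and the right hand side becomes $\Hom_{\CAlg^\heartsuit_{/\fl}}(\pi_0(A_{\overline{\rho}}), B)$. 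Hence $\pi_0(A_{\overline{\rho}})$ pro-represents $\Def^{\mathrm{Mazur}}_{\overline{\rho}}$ on ordinary Artinian rings, and by Mazur's theorem this pro-representing object is unique, giving the claimed equivalence with the classical universal deformation ring. The main obstacle is the first paragraph: verifying that a morphism in $\ind(\pro(\cS))$ really corresponds to a continuous group homomorphism in the classical sense and that conjugation corresponds to the correct automorphism action; once this translation is carried out carefully (as is essentially done in \cite[\S 4.3--4.4]{me1}), the rest is formal.
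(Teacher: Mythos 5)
Your proof follows the same route as the paper's: observe that $\rmB\cEnd(A)$ is $1$-truncated for discrete $A$, unwind the homotopy fiber in \eqref{deform} to identify $\pi_0$ with lifts of $\overline{\rho}$ modulo conjugation by $K_A=\ker(\GLn(A)\to\GLn(\fl))$, i.e.\ Mazur's functor, and then transfer pro-representability through $\pi_0$. Your version is in fact more careful than the paper's about why the fiber-product description yields strict equivalence classes of lifts.

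One claim in your second paragraph is false, however: the higher homotopy groups of $\Def_{\overline{\rho}}(A)$ do \emph{not} vanish for $A$ discrete. The long exact sequence of the fiber sequence gives
\[
\pi_1\bigl(\Def_{\overline{\rho}}(A),\rho_A\bigr)\;\cong\;Z_{\GLn(A)}(\rho_A)\cap K_A,
\]
which always contains the scalar matrices $1+\mathfrak m_A$, so the fiber is genuinely $1$-truncated (its $\pi_1$ is controlled by $H^0(G,\Ad(\rho_A)\otimes\mathfrak m_A)$, consistent with the tangent complex $C^*_{\cont}(G,\Ad(\rho))[1]$). This does not damage the substance of the argument --- Mazur's functor is set-valued and the proposition can only be read at the level of $\pi_0$, which is exactly what you compute correctly --- and the paper's own proof makes the same imprecise assertion that the right-hand side of \eqref{eq:0} is $0$-truncated. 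But you should delete or correct the sentence asserting vanishing of higher homotopy and instead state explicitly that the identification is one of $\pi_0$'s (equivalently, of the associated set-valued functors), after which the universal-property argument in your last paragraph goes through unchanged.
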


\begin{proof}
Given $R \in \CAlg_{/ \fl}^{\sm, \heartsuit} \subseteq \CAlg_{/ \fl }^{\sm}$ an ordinary (Artinian) local $\fl$-algebra,
the object $\rmB \cEnd(R) \in  \ind(\pro(\cS))$ is \emph{$1$-truncated}. Therefore one has a natural equivalence of spaces
	\begin{equation} \label{eq:0}
		\mathrm{t_0} \left( \Def_{\overline{\rho}} \right) (R) \simeq \Map_{\ind(\pro(\cS)) } \left( \mathrm B \pi_1^{\et}(X), \rmB \cEnd(A) \right) \times_{\Def_{\overline{\rho}}(k)} \{ \overline{\rho} \}.
	\end{equation}
By construction, the ordinary $W(\fl)$-algebra $\pi_0(A_{\overline{\rho}})$ pro-represents the functor $\mathrm t_0 \left( \Def_{\overline{\rho}} \right) \colon \CAlg_{/\fl}^{\sm, \heartsuit} \to \cS
$. As a consequence, the
mapping space on the right hand side of \eqref{eq:0} is $0$-truncated and the set of $R$-points corresponds to deformations of $\overline{\rho}$ valued in $R$. This is precisely Mazur's deformation functor, as introduced in \cite[Section 1.2]{mazurDG}, concluding
the proof.
\end{proof}

\subsection{Comparison with S. Galatius, A. Venkatesh derived deformation ring} In the case where $X $ corresponds to the spectrum of a maximal unramified extension, outside a finite set $S $ of primes, of a number field $L$ and $\rho \colon G_X \to \GLn(K)$
is a continuous representation, the corresponding derived $W(k)$-algebra was first introduced and extensively studied in \cite{galatius_dg}.

\subsection{Comparison with G. Chenevier moduli of pseudo-representations} In this section we will compare our derived moduli stack $\dLocSys(X)$ with the construction of the moduli of \emph{pseudo-representations} introduced in \cite{chenevier}.
We prove that $\dLocSys(X)$ admits an admissible analytic substack which is a disjoint union of the various $\Def_{\overline{\rho}}$.
Such disjoint union of deformation functors admits a canonical map to the moduli of pseudo-representations of introduced in \cite{chenevier}. Such morphism of derived stacks is obtained as the composite of the $0$-truncation functor 
followed by the morphism which associates to a continuous representation $\rho $ its corresponding pseudo-representation, see \cite[Definition 1.5]{chenevier}. Nevertheless, the derived moduli stack $\dLocSys(X)$
has more points in general, and we will provide a typical example in order to illustrate this phenomena.

\begin{prop} \label{moc}
Let $\overline{\rho} \colon \pi_1^{\emph{\et}}(X) \to \GLn(\overline{\mathbb F}_\ell)$ be a continuous residual $\ell$-adic representation.
To $\overline{\rho}$ we can attach a derived $\Q_\ell$-analytic space $\Def_{\overline{ \rho}}^{\rig } \in \dAn_{\Q_\ell}$ for which every closed point $\rho \colon \Sp L \to \Def_{\overline{\rho}}^\rig$ is equivalent to a continuous deformation of $\overline{\rho}$ over
$L$.
\end{prop}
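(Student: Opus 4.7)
The plan is to invoke the pro-representability of the derived formal moduli problem $\Def_{\overline{\rho}}$ established in the previous subsection, and then apply the rigidification functor of \cite{me2} to pass from derived formal $W(\mathfrak{l})$-schemes to derived $\Q_\ell$-analytic spaces.

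First, I recall from the preceding subsection that $\Def_{\overline{\rho}}$ is pro-representable by a local Noetherian derived $W(\mathfrak{l})$-algebra $A_{\overline{\rho}}$ which is complete with respect to its augmentation ideal $\mathfrak{m}_{A_{\overline{\rho}}}$, has residue field $\mathfrak{l}$, and satisfies $\ell \neq 0$ in $\pi_0(A_{\overline{\rho}})$. Consequently, the formal spectrum $\Spf A_{\overline{\rho}}$ defines a derived $W(\mathfrak{l})$-adic formal scheme. Since $A_{\overline{\rho}}$ is Noetherian, $\mathfrak{m}$-adically complete, and $\ell$-torsion free in $\pi_0$, this formal scheme is locally admissible in the sense of \cite{me2}.

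Next, I apply the rigidification functor
\[
(-)^\rig \colon \dfSch \to \dAnl
\]
from \cite[\S 4]{me2} to define
\[
\Def^\rig_{\overline{\rho}} \coloneqq \big( \Spf A_{\overline{\rho}} \big)^{\rig} \in \dAnl.
\]
Functoriality and the standing assumption that $\ell$ is invertible on the generic fibre guarantee that the resulting object is a bona fide derived $\Q_\ell$-analytic space, not merely a formal stack.

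Finally, I identify the closed points. By the rigidification adjunction, a closed point $\rho \colon \Sp L \to \Def^\rig_{\overline{\rho}}$, with $L/\Q_\ell$ a finite extension contained in a completion of $\Frac(W(\mathfrak{l}))$, corresponds to a morphism $\Spf \cO_L \to \Spf A_{\overline{\rho}}$ of derived $\ell$-adic formal schemes, equivalently, to a continuous $W(\mathfrak{l})$-algebra map $A_{\overline{\rho}} \to \cO_L$ whose composition with reduction $\cO_L \to \mathfrak{l}$ recovers the classifying map of $\overline{\rho}$. The pro-representability of $\Def_{\overline{\rho}}$ then translates this datum into a continuous deformation $\widetilde{\rho} \colon \pi_1^\et(X) \to \GLn(\cO_L)$ of $\overline{\rho}$, and inverting $\ell$ yields the desired continuous representation valued in $\GLn(L)$.

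The main obstacle I anticipate is the verification that $\Spf A_{\overline{\rho}}$ lies in the locally admissible locus on which the rigidification of \cite{me2} is defined: one needs $A_{\overline{\rho}}$ to have bounded $\ell$-power torsion in each cohomological degree and the derived structure to be compatible with the $\ell$-adic topology, both of which should follow from Noetherianness of the pro-representing algebra together with the vanishing of $H^i$ of the tangent complex in sufficiently negative degrees (guaranteed by finiteness of $\ell$-adic \'etale cohomology of $X$ and the formula $\mathbb T_{\Def_{\overline{\rho}}} \simeq C^*_\et(X, \Ad(\overline{\rho}))[1]$).
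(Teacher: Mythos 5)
Your proposal follows essentially the same route as the paper's proof: pro-representability of $\Def_{\overline{\rho}}$ by the complete local Noetherian derived $W(\mathfrak{l})$-algebra $A_{\overline{\rho}}$, passage to $\Spf A_{\overline{\rho}}$ as a locally admissible derived formal scheme, rigidification via \cite{me2}, and identification of closed points through the existence of formal models. The only substantive difference is that the paper additionally computes the full functor of points of $\Spf A_{\overline{\rho}}$ on arbitrary admissible derived $W(\mathfrak{l})$-algebras by an fppf-descent argument over the completions at all maximal ideals of $\pi_0(A)$ --- a stronger statement that is then used to construct the morphism $\theta \colon \coprod_{\overline{\rho}} \Def^{\rig}_{\overline{\rho}} \to \dLocSys(X)$ --- whereas for the proposition as literally stated your direct treatment of $L$-valued points suffices.
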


\begin{proof}
Denote by $\dfSch_{W(\fl)}$ the \infcat of \emph{derived formal schemes} over $W(\fl)$, introduced in \cite[section 2.8]{lurieSAG}.
The local Noetherian derived $W(\fl)$-algebra $A_{\overline{\rho}}$ is complete with respect to its maximal ideal $\mathfrak m_{A_{\overline{\rho}}}$. For this reason, we can consider its associated derived formal scheme $\Spf A_{\overline{\rho}} \in 
\dfSch_{W(\fl)}$.

Let $A \in \CAlg^{}_{W(\fl)}$ denote an admissible derived $W(\fl)$-algebra, see \cite[Definition 3.1.1]{me2}. We have an equivalence of mapping spaces
	\[
		\Map_{\dfSch_{W(\fl)}} \left( \Spf A, \Spf A_{\overline{\rho}} \right) \simeq \Map_{\CAlg_{W(\fl)}^{\ad}} \left( A_{\overline{\rho}}, A \right).
	\]
Notice that as $A$ is a $\ell$-complete topological almost of finite type over $W(k)$, the image of each $t \in \mathfrak m_{A_{\overline{\rho}}}$ is necessarily a topological nilpotent element of the ordinary commutative ring $\pi_0(A)$.
Let $\mathfrak m \subseteq \pi_0(A)$ denote a maximal ideal of $\pi_0(A)$ and let $\left( A\right)^{\wedge}_{\mathfrak{m}}$ denote the $\mathfrak m$-completion of $A$.
There exists a faithfully flat morphism of derived adic $W(k)$-algebra
	\[
		A \to A' \coloneqq \prod_{\mathfrak m \subseteq \pi_0(A)} 	\left( A \right)^\wedge_{\mathfrak{m}}
	\]
where the product is labeled by the set of maximal ideals of $\pi_0(A)$. By fppf descent we have an equivalence of mapping spaces
	\begin{equation} \label{ffdes}
		 \Map_{\CAlg^{\ad}_{W(k)}} \left( A_{\overline{\rho}}, A \right) \simeq \lim_{[n] \in \mathbf \Delta^{\op}} \Map_{\CAlg_{W(k)}^{\ad}} \left( A_{\overline{\rho}}, A'_{[n]} \right) 
	\end{equation}
where $A'_{[n]} \coloneqq A' \widehat{\otimes}_A \dots \widehat{\otimes}_A A'$ denotes the $n+1$-tensor fold of $A'$ with itself over $A$ computed in the \infcat of derived adic $W(k)$-algebras $\CAlg_{W(k)}^\ad$.
For a fixed $[n] \in \mathbf \Delta^{\op}$ we an equivalence of spaces
	\[
		\Map_{\CAlg^{\ad}_{W(k)} } \left( A_{\overline{\rho}}, A'_{[n]} \right) \simeq
		  \Def_{\overline{\rho}} \left( A'_{[n]} \right).
	\]
For each $[n] \in \mathbf \Delta^{\op}$ we obtain thus a natural inclusion morphism $\theta_{[n]} \colon \Map_{\CAlg^{\ad}_{W(k)}} \left( A_{\overline{\rho}}, A'_{[n]} \right) \to \dLocSys(X)(A'_{[n]})$.  The $\theta_{[n]}$ assemble together and by fppf descent induce
a morphism $\theta \colon \Map_{\CAlg^{\ad}_{W(k)}} \left( A_{\overline{\rho}}, A \right) \to \dLocSys(X) (A)$. By construction, $\theta$ induces a natural map of mapping spaces	
	\[
		\Map_{\CAlg^{\ad}_{W(k)}} \left( A_{\overline{\rho}}, A \right) \to \prod_{\mathfrak m \subseteq \pi_0(A)} \bigg( \dLocSys(X)(A) \times_{\Def_{\overline{\rho}} \left( A^\wedge_{\mathfrak m} \right)} \dLocSys(X)(A^\wedge_{\mathfrak m}) 
		\bigg)
	\]
which is equivalence of spaces.
In order words $\Spf A_{\overline{\rho}}$ represents the moduli functor which assigns to each affine derived formal scheme
$\Spf A$, over $W(\fl)$, the space of continuous representations $ \rho \colon \Sh^\et(X) \to \mathrm B \GLn(A)$ such that for each maximal ideal $\mathfrak m \subseteq \pi_0(A)$ the induced representation 
	\[
		\left( \rho \right)^\wedge_{\mathfrak{m}} \colon \Sh^\et(X) \to \mathrm B \GLn \left( \left( A^\wedge_{\mathfrak{m}} \right) \right)
	\]
is a deformation of $\overline{\rho} \colon \Sh^{\et}(X) \to \mathrm B \GLn(k)$. The formal spectrum $\Spf A_{\overline{\rho}}$ is locally admissible, see \cite[Definition 3.1.1]{me2}. We can thus consider its rigidificiation introduced in
\cite[Proposition 3.1.2]{me2} which we denote by $\Def^{\rig}_{\overline{\rho}}
\coloneqq \left( \Spf A_{\overline{\rho}} \right)^\rig \in \dAnl$. Notice that
$\Def^{\rig}_{\overline{\rho}}$ is not necessarily derived affinoid. 

Let $Z \in \dAfdl$, \cite[Corollary 4.4.13]{me2} implies that any given morphism $ f \colon Z \to \left( \Spf A_{\overline{\rho}} \right)^{\rig}$ in $\dAnl$ admits necessarily a formal model, i.e., it is equivalent to the rigidification of a morphism
	\[
		\mathfrak f \colon \Spf A \to \Spf A_{\overline{\rho}},
	\]
where $A  \in \CAlg_{W(k)}^{\ad}$ is a suitable admissible derived $W(\fl)$-algebra. The proof now follows from our previous discussion.
\end{proof}

The proof of \cref{moc} provides us with a canonical morphism of derived moduli stacks $\Def^{\rig}_{\overline{\rho}} \to \LocSys(X)$. Therefore, passing to the colimit over all continuous representations
	\[
		\overline{\rho} \colon \pi_1^{\et}(X) \to \GLn(\mathbb F_\ell)
	\]
provides us with a morphism 
	\begin{equation} \label{Psi}
		\theta   \colon
															\coprod_{\overline{\rho} } \Def_{\overline{\rho} }^\rig
																\to
																															\dLocSys(X)							
	\end{equation}
in the $\infty$-category $\dSt(\dAfdl, \tau_{\et})$.

\begin{prop} \label{open_im}
The morphism of derived $\Q_\ell$-analytic stacks 
	\[
		\theta \colon \coprod_{\rho \colon \pi_1^\emphet(X) \to \GLn(\bar{\Q}_{\ell}) } \Def_{\rho}^\rig \to \LocSys(G)
	\]
displayed in \eqref{Psi} exhibits the left hand side as an analytic subdomain of the right hand side.
\end{prop}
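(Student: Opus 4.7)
The plan is to show that $\theta$ is an étale monomorphism of derived $\Q_\ell$-analytic stacks, since in the geometric context $(\dAfdl, \tau_\et, \rmP_\sm)$ an étale monomorphism is precisely an open immersion, which is what it means to exhibit the source as an analytic subdomain of the target.

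First, I would establish that $\theta$ is formally étale by a pointwise comparison of analytic cotangent complexes. For each residual representation $\overline{\rho}$ and each point $\rho \colon Z \to \Def^\rig_{\overline{\rho}}$ with $Z \in \dAfdl$, the tangent complex of $\Def^\rig_{\overline{\rho}}$ at $\rho$ is naturally equivalent to $C^*_\et\big(X, \Ad(\rho)\big)[1]$: this is obtained by combining the pro-representability of $\Def_{\overline{\rho}}$ by $A_{\overline{\rho}}$, the identification $\bT_{\Def_{\overline{\rho}}} \simeq C^*_\et(X, \Ad(\rho))[1]$ recalled in \eqref{cont_coh}, and the compatibility of tangent complexes with the rigidification functor of \cite[Proposition 3.1.2]{me2}. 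The tangent complex of $\dLocSys(X)$ at $\theta(\rho)$ is given by the same formula, and since the identification is realized by the tautological morphism $\theta$ itself, the induced map on analytic cotangent complexes is an equivalence in $\Mod_{\Gamma(Z)}$.

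Second, I would show that $\theta$ is a monomorphism. Given $Z \in \dAfdl$ and $\rho \in \dLocSys(X)(Z)$, the fiber of $\theta$ over $\rho$ is at most a point: at each maximal ideal $\mathfrak{m} \subseteq \pi_0(\Gamma(Z))$, the reduction of $\rho$ pins down a residual representation $\overline{\rho}$ up to conjugation, and this determines the component of the coproduct into which $\mathfrak{m}$ must map. If distinct maximal ideals yield non-isomorphic residuals, $Z$ decomposes into a clopen disjoint union and each piece maps uniquely into the corresponding component. The uniqueness of the lift on each piece is a direct consequence of the universal property of $A_{\overline{\rho}}$, arguing exactly via the fppf descent \eqref{ffdes} employed in the proof of \cref{moc}.

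Combining these two steps, $\theta$ is an étale monomorphism, hence an open immersion of derived $\Q_\ell$-analytic stacks. I anticipate the main technical obstacle to be the handling of the (a priori infinite) coproduct indexed by isomorphism classes of residual representations $\overline{\rho}$: one must verify that formation of this coproduct interacts correctly both with the rigidification functor and with clopen decompositions of general $Z \in \dAfdl$, so that the heuristic decomposition of $Z$ by residual behavior genuinely defines a morphism in $\dSt(\dAfdl, \tau_\et)$ whose formal étaleness and monomorphicity can be verified componentwise as above.
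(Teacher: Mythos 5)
Your proposal is correct and follows essentially the same route as the paper: the paper's proof likewise establishes that each $\theta_{\overline{\rho}}$ induces an equivalence on tangent complexes (hence is \'etale) and invokes \cref{moc} for the substack/monomorphism property, concluding that the map is locally an admissible subdomain inclusion. Your write-up is more detailed than the paper's — in particular your explicit attention to the infinite coproduct and the clopen decomposition of $Z$ by residual behavior addresses a point the paper passes over in silence — but the underlying argument is the same.
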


\begin{proof}
Let $\overline{\rho} \colon \pi_1^\et(X) \to \GLn( \bF_\ell)$ be a continuous representation. The induced morphism
	\[
		\theta_{\overline{\rho}} \colon \Def^\rig_{\overline{\rho}} \to \dLocSys(X)
	\]
is an \'etale morphism of derived stacks, which follows by noticing
that $\theta_{\overline{\rho}}$ induces an equivalence at the level of tangent complexes. Moreover, \cref{moc} implies that $ \theta_{\overline{\rho}} \colon \Def^\rig_{\overline{\rho}} \to \dLocSys(X)$ exhibits the former as a substack of the latter.
It then follows that the morphism is locally an admissible subdomain inclusion. The result now follows.
\end{proof} 

\cref{open_im} implies that $\dLocSys(X)$ admits as an analytic subdomain the disjoint union of those derived $\Q_{\ell}$-analytic spaces $\Def_{\overline{\rho}}^{\rig}$.
One could then ask if $\theta$ is itself an epimorphism of stacks and thus an equivalence of such. However,
this is not the case in general as the following example illustrates:

\begin{exem} \label{ex_surj}
Let $G = \mathbb Z_{\ell}$ with its additive structure and let $A = \Q_{\ell} \langle T \rangle$ be the (classical) Tate $\Q_{\ell}$-algebra on one generator. Consider the following continuous representation
	\[
		\rho \colon G \to \GL_2 (\Q_{\ell} \langle T \rangle),
	\]
given by
	\[
		1 \mapsto 
		\begin{bmatrix}
			1 & T \\
			0 & 1
		\end{bmatrix}.
	\]
It follows that $\rho$ is a $\Q_{\ell} \langle T \rangle$-point of $\LocSys( \bZ_{\ell})$ but it does not belongs to the image of the disjoint union $\Def^{\rig}_{\overline{\rho}}$ as $\rho$ cannot be factored as a point belonging to the interior of the closed unit disk
$\mathrm{Sp} \left( 	\Q_{\ell} \langle T \rangle 		\right)$.
\end{exem}

\begin{rema}
As \cref{ex_surj} suggests, when $n =2$ the derived moduli stack $\dLocSys(X)$ does admit more points than those that come from deformations of its closed points.
However, we do not know if $\dLocSys$ can be written as a disjoint union of the closures of $\Def^{\rig}_{\overline{\rho}}$ in $\LocSys(X)$. However, when $n = 1$ the analytic subdomain morphism $\theta$ is an equivalence in the \infcat $\dSt \big( \dAfdl,
\tau_\et \big)$.
\end{rema}

\section{Shifted symplectic structure on $\dLocSys(X)$}

Let $X$ be a smooth and proper scheme over an algebraically closed field of positive characteristic $p>0$. Poincar\'e duality provide us with a canonical map
	\[
		\varphi \colon C^*_{\et} \left( X , \Q_\ell \right) \otimes_{\Q_\ell}  C^*_{\et} \left( X , \Q_\ell \right)  \to \Q_\ell[- 2 d]
	\]
in the derived \infcat $\Mod_{\Q_\ell}$ is non-degenerate, i.e., it induces an equivalence of \emph{derived} $\Q_\ell$-modules
	\begin{equation} \label{pd}
		 C^*_{\et} \left( X , \Q_\ell \right) \to  C^*_{\et} \left( X , \Q_\ell \right) ^\vee [-2d],
	\end{equation}
in $\Mod_{\Q_\ell}$. As we have seen in the previous section, we can identify the left hand side of \eqref{pd} with a (shit) of the tangent space of $\dLocSys(X)$ at the trivial representation. Moreover, the equivalence \label{pd} holds if we consider \'etale (co)chains
with more general coefficients. The case that interest us is taking \'etale cohomology with $\Ad( \rho)$-coefficients for a continuous representation $\rho \colon \pi_1^{\et}(X) \to \GLn(A)$, with $A \in \Afdl$. Let $\rho \in \dLocSys(X)(Z)$, we can regard $\rho$
as a dualizable object of the symmetric monoidal \infcat $\Perf^{\ad}_\ell(X) \coloneqq \Fun_{\cE \Cat} \big( \Sh^\et(X), \Perf( A ) \big)$. Let $\rho^\vee$ denote a dual for $\rho $. By definition of dualizable objects, we have a canonical trace map
	\[
		\tr_\rho \colon \rho \otimes \rho^\vee \to 1_{\Perf^{\ad}_\ell(X)}
	\]
in the \infcat $\Perf^{\ad}_\ell(X)$ and $1_{\Perf^{\ad}_\ell(X)}$ denotes the unit object of the latter \infcat. Therefore, passing to mapping spaces, we obtain a natural composite
	\begin{align}
		\Map_{\Perf^{\ad}_\ell(X)} \left( 1, \Ad(\rho) \right) \otimes \Map_{\Perf^{\ad}_\ell(X)} \left( 1, \Ad(\rho) \right) & \xrightarrow[]{\mathrm{mult}}
		 \Map_{\Perf^{\ad}_\ell(X)} \left( 1, \Ad(\rho)  \right) \\
		 & \xrightarrow[]{\tr_\rho} \Map_{\Perf^{\ad}_\ell(X)} \left( 1, 1 \right)
	\end{align}
in the \infcat $\Mod_{\Gamma(Z)}$.
By identifying the above with \'etale cohomology coefficients with coefficients we obtain a non-degenerate bilinear form
	\begin{equation} \label{PDet}
		C^*_\et \big( X, \Ad(\rho) \big)[1] \otimes C_\et^*(X, \Ad(\rho) \big)[1] \to C_\et^* \big( X, \Ad(\rho ) \big)[2] \xrightarrow{\tr_\rho} C^*_\et \big(X,  \Gamma(Z) \big) [2 -2d]
	\end{equation}
in the \infcat $\Mod_{\Gamma(Z)}$. Moreover, this non-degenerate bilinear form can be interpreted as a Poincar\'e duality statement with $\Ad(\rho)$-coefficients.

Our goal in this \S is to construct a shifted symplectic form $\omega$ on $\dLocSys(X)$ in such a way that its underlying bilinear form coincides precisely with the composite \eqref{PDet}. We will also analyze some of its consequences. Before continuing our
treatment we will state a $\Ql$-analytic version of the derived HKR theorem, first proved in the context of derived algebraica geometry in \cite{toen_s1}.

\begin{theorem}[Analytic HKR Theorem] \label{analytic_HKR}
Let $k$ denote either the field of complex numbers or a non-archimedean field of characteristic $0$ with a non-trivial valuation. Let $X  \in \dAn_k$ be a derived $k$-analytic space. Then there is an equivalence of derived analytic spaces
	\[
		X \times_{X \times X } X \simeq \rmT X[-1],
	\]
compatible with the projection to $X$.
\end{theorem}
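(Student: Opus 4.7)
The plan is to reduce to the derived affinoid case and then adapt the Toen--Vezzosi proof of the algebraic derived HKR theorem to the analytic setting, using Porta--Yu's derived $k$-analytic geometry. First, I observe that both the loop space $LX := X \times_{X \times X} X$ and the shifted tangent $\rmT X[-1]$ (the relative analytic spectrum over $X$ of the analytic symmetric algebra $\Sym^\an_{\cO_X}(\bL^\an_X[1])$) commute with pullback along admissible open immersions of $X$. The claimed equivalence is therefore local on $X$ in the analytic topology, and we may reduce to $X = \Sp(A) \in \dAfd$ a derived $k$-affinoid space.

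Under this reduction, $\rmT X[-1]$ is represented by the analytic commutative algebra $\Sym^\an_A(\bL^\an_A[1])$, while the loop space is the analytic spectrum of the \emph{analytic Hochschild complex}
\[
HH^\an(A) \ := \ A \otimes^\an_{A \otimes^\an A} A,
\]
formed in the \infcat of derived $k$-analytic commutative algebras; this uses that analytic tensor products compute fiber products of derived affinoids. The presentation $S^1 \simeq * \sqcup_{* \sqcup *} *$ exhibits $HH^\an(A)$ as the analytic mapping algebra from $S^1$ to $\Sp(A)$, and rotation of $S^1$ equips it with a natural $\bG_m$-action.

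Next I construct the HKR filtration: following the algebraic case, the $\bG_m$-action induces a complete decreasing weight filtration on $HH^\an(A)$ whose associated graded is canonically identified with $\Sym^\an_A(\bL^\an_A[1])$. In characteristic zero, the $\bG_m$-action splits this filtration canonically, yielding an equivalence
\[
HH^\an(A) \ \simeq \ \Sym^\an_A(\bL^\an_A[1])
\]
of derived analytic commutative $A$-algebras. Compatibility with the projection to $\Sp(A)$ is automatic, since on both sides the projection corresponds to the canonical augmentation by $A$. Globalising to a general derived $k$-analytic space $X$ then follows by descent along an admissible affinoid atlas, since the whole construction is functorial in $\Sp(A)$.

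The main obstacle is transporting the HKR filtration and its $\bG_m$-splitting into the analytic category: one needs the analytic Sym functor on perfect modules to be suitably functorial, the analytic $S^1$-equivariant structure on $HH^\an(A)$ to behave like its algebraic counterpart, and the analytic cotangent complex formalism to be compatible with both constructions. These are precisely the points at which one must appeal to the Porta--Yu framework. Characteristic zero is essential throughout, both to construct the analytic exponential map giving the formal HKR statement and to split the weight filtration on $HH^\an(A)$.
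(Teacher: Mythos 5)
The paper does not actually prove \cref{analytic_HKR}: immediately after the statement the author declares that the proof is work in progress with F.~Petit and M.~Porta, to be included in a thesis. There is therefore no argument in the text against which to compare yours; I can only assess your outline on its own terms.

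Your outline follows the expected To\"en--Vezzosi strategy (localize to the affinoid case, identify the loop space with a Hochschild object, produce the HKR filtration, split it in characteristic zero); the localization step and the final descent step are unproblematic. But the three assertions that carry all the weight are exactly the ones you defer, so as it stands this is a strategy rather than a proof. Concretely: (i) you assert that $X \times_{X \times X} X$ for $X = \Sp(A)$ is the analytic spectrum of $A \otimes^{\an}_{A \otimes^{\an} A} A$; in derived analytic geometry a derived affinoid is a pair $(\cX, \cO_X)$ carrying a $\cT_{\an}(k)$-structure, not merely an algebra, and identifying the self-intersection of the analytic diagonal with a completed Hochschild construction on global sections is precisely one of the nontrivial inputs one must extract from the Porta--Yu formalism. (ii) You assert that the associated graded of the HKR filtration is $\Sym^{\an}_A(\bL^{\an}_A[1])$ with the \emph{analytic} cotangent complex; relating the conormal of the analytic diagonal to $\bL^{\an}_A[1]$ again needs proof, since $\bL^{\an}$ is defined via analytic square-zero extensions and does not coincide in general with the algebraic cotangent complex of $\Gamma(X)$. (iii) The splitting argument is circular as written: rotation of $S^1$ gives an $S^1$-action on the loop space, not a $\mathbb{G}_m$-action, and the existence of a weight $\mathbb{G}_m$-action refining the HKR filtration is essentially equivalent to the splitting you are trying to establish. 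In characteristic zero the splitting does hold in the algebraic setting, but it requires an actual argument (for instance via the simplicial presentation of $S^1$ and formality), and that argument must be redone analytically. None of these points is fatal to the strategy, but each is a genuine gap, and together they constitute the entire mathematical content of the theorem.
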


The proof of \cref{analytic_HKR} is a work in progress together with F. Petit and M. Porta, which the author hopes to include in his PhD thesis.

\subsection{Shifted symplectic structures} In this \S we fix $X$ a smooth scheme over an algebraically closed field $k$ of positive characteristic $p $.

In \cite{toen_ss} the author proved the existence of shifted symplectic structures on certain derived algebraic stacks which cannot be presented as certain mapping stacks. As $\dLocSys(X)$ cannot be presented as usual analytic mapping stack,
we will need to apply the results of \cite{toen_ss} to construct the desired shifted sympletic structure on $\dLocSys(X)$.

\begin{defi}
Consider the canonical inclusion functor $\iota \colon \dSt \left( \dAfdl, \tau_{\et}, P_{\sm} \right) \subseteq \Fun \left( \dAfdl, \cS \right)$. The functor $\iota$ admits a left adjoint which we refer to as \emph{the stackification functor} $\left(- \right)^{\mathrm{st}} \colon
\Fun \left( \dAfdl, \cS \right) 
\to \dSt \left( \dAfdl, \tau_{\et}, P_{\sm} \right)$.
\end{defi}

\begin{defi}
Consider the functor $ \PerfSys^f \colon \dAfdl \to \cS$ which is defined via the assignment
	\[
		Z \in \dAfdl^{\op} \mapsto \Map_{\cE \Cat}  \left( \Sh^{\et}(X), \Perf \big( \Gamma(Z) \big) \right) \in \cS
	\]
where we designate $\Perf \big(\Gamma(Z) \big)
$ to be the  $\ind(\pro(\cS))$-enriched \infcat of perfect $\Gamma(Z)$-modules, which is equivalent to the subcategory of dualizable objects in the \infcat of Tate modules on $\Gamma(Z)$, $\Mod^{\mathrm{Tate}}_{\Gamma(Z)}$, \cite{need reference 
here}. We define the moduli stack $\PerfSys \in \dSt \left( \dAfdl, \tau_{\et}, \right)$ as the stackyfication of $\PerfSys^f$.
\end{defi}

\begin{rema}
This is an example of a moduli stack which cannot be presented as a usual mapping stack, instead one should think of it as an example of a \emph{continuous
mapping stack}.
\end{rema}

\begin{notation}
We will denote $\Cat^{\otimes}$ the \infcat of (small) symmetric monoidal \infcats.
\end{notation}

\begin{defi}
Let $\cC \in \Cat^\otimes$ be a symmetric monoidal \infcat. We say that $\cC$ is a rigid symmetric monoidal \infcat if every object $C \in \cC$ is dualizable.
\end{defi}

\begin{notation} \label{rigCat}
We denote by $\rigCat$ the \infcat of small rigid symmetric monoidal \infcats.
\end{notation}

 Consider the usual inclusion of \infcats $\cS \hookrightarrow \Cat$, it admits a right adjoint, denoted
 	\[
		(-)^{\simeq} \colon \Cat \to \cS
	\]
which we refer as the \emph{underlying $\infty$-groupoid functor}. Given $\cC \in \Cat$ its underlying $\infty$-groupoid $\cC^{\simeq} \in \cS$ consists of the maximal subgroupoid of $\cC$, 
i.e., the subcategory spanned by equivalences in $\cC$.

\begin{lemma} \label{lem:rigidity}
There exists a valued $\Cat^{\st, \omega, \otimes}$-valued pre-sheaf 
	\[
		\Perf^\ad_\ell(X) \colon \dAfdl \to \Cat
	\]
given on objects by the formula
	\[
		Z \in \dAfdl \mapsto \Fun_{\cE \Cat} \left( X, \Perf \big( \Gamma(Z) \big) \right).	
	\]
Moreover, the underlying derived stack $(-)^{\simeq} \circ \Perf^\ad_\ell(X) \in \dSt \big( \Afdl, \tau_\et \big)$ is naturally
equivalent to derived stack $\PerfSys \in \dSt \left( \dAfdl, \tau_{\emphet}\right)$.
\end{lemma}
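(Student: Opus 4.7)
The plan is to construct $\Perf^\ad_\ell(X)$ in three stages and then read off the comparison with $\PerfSys$.

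First, for each $Z \in \dAfdl$, by \cref{prop:enr} the \infcat $\Perf(\Gamma(Z))$ is canonically enriched over $\ind(\pro(\cS))$, hence an object of $\cE\Cat$; and $\Sh^\et(X) \in \pro(\cS^\fc)$ is tautologically such an object. I would therefore define the internal mapping object
\[
    \Perf^\ad_\ell(X)(Z) \coloneqq \Fun_{\cE\Cat}\bigl(\Sh^\et(X), \Perf(\Gamma(Z))\bigr),
\]
and check that it inherits stability, idempotent completeness, and compact generation from $\Perf(\Gamma(Z))$ via standard pointwise arguments (limits and colimits are computed objectwise, and a family of compact generators is produced by evaluation at a basepoint of $\Sh^\et(X)$ tensored with compact generators of the target). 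The pointwise tensor product equips $\Perf^\ad_\ell(X)(Z)$ with a symmetric monoidal structure; rigidity is inherited from rigidity of $\Perf(\Gamma(Z))$, the dual of a continuous functor $\rho$ being $\rho^\vee$ defined objectwise, with evaluation and coevaluation assembled pointwise. These checks place $\Perf^\ad_\ell(X)(Z)$ in $\rigCat$.

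Second, I would establish functoriality in $Z$. A morphism $Z \to Z'$ in $\dAfdl$ induces a morphism $\Gamma(Z') \to \Gamma(Z)$ of derived $\Q_\ell$-algebras, hence a symmetric monoidal, colimit-preserving base-change functor $\Perf(\Gamma(Z')) \to \Perf(\Gamma(Z))$ in $\rigCat$; postcomposition gives the required morphism
\[
    \Fun_{\cE\Cat}\bigl(\Sh^\et(X), \Perf(\Gamma(Z'))\bigr) \longrightarrow \Fun_{\cE\Cat}\bigl(\Sh^\et(X), \Perf(\Gamma(Z))\bigr).
\]
Coherent $\infty$-functoriality comes from the corresponding \infcat-functoriality of $Z \mapsto \Perf(\Gamma(Z))$, which is already implicit in the very definition of $\PerfSys^f$, combined with the functoriality of the internal hom in $\cE\Cat$.

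For the final comparison, the underlying $\infty$-groupoid of an internal functor \infcat in $\cE\Cat$ is by construction the mapping space, so
\[
    \Perf^\ad_\ell(X)(Z)^\simeq \simeq \Map_{\cE\Cat}\bigl(\Sh^\et(X), \Perf(\Gamma(Z))\bigr) = \PerfSys^f(Z),
\]
naturally in $Z$. Applying the stackification functor to both sides then yields the desired equivalence $(-)^\simeq \circ \Perf^\ad_\ell(X) \simeq \PerfSys$ in $\dSt(\dAfdl, \tau_\et)$. The principal technical obstacle is upgrading the pointwise constructions (tensor product, duals, base change) to a coherent $\infty$-functor $\dAfdl^\op \to \rigCat$ rather than merely a functor of homotopy categories; this requires careful use of the internal hom in $\cE\Cat$ and the full \infcat-functoriality of $\Gamma$, but once these coherences are in place the identification with $\PerfSys$ is essentially formal.
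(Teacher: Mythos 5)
Your proposal is correct and takes essentially the same route as the paper, which disposes of the lemma by citing \cite[Definition 4.3.11]{me1} for the construction of $\Perf^\ad_\ell(X)$ and observing that the identification $\big( \Perf^\ad_\ell(X) \big)^\simeq \simeq \PerfSys(X)$ is definitional; your pointwise verification of stability, rigidity, and base-change functoriality simply unpacks what that citation encapsulates. Your extra care in stackifying both sides of the groupoid comparison is a reasonable (and slightly more precise) handling of a point the paper leaves implicit.
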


\begin{proof}
The construction of $\Perf^\ad_\ell(X)$ is already provided in \cite[Definition 4.3.11]{me1}. Moreover, it follows directly from the definitions that $ \big( \Perf^\ad_\ell(X) \big)^\simeq \simeq \PerfSys(X)$.
\end{proof}

\cref{lem:rigidity} is useful because it place us in the situation of \cite[\S 3]{toen_ss}. Therefore, we can run the main argument presented in \cite[\S 3]{toen_ss}. Before doing so, we will need to introduce some more ingredients:

\begin{defi}
Let $H \left( \Perf^\ad_\ell(X) \right) \colon \dAfdl^{\op} \to \cS$ denotes the sheaf defined on objects via the formula
	\[
		Z \in \dAfdl^{\op} \mapsto \Map_{\Perf^\ad_\ell(X) \big( \Gamma(Z) \big)} \left( 1, 1 \right) \in \cS,
	\]
where $\mathbf 1 \in \Perf^\ad_\ell(X) (\Gamma(Z))$ denotes the unit of the corresponding symmetric monoidal structure on $\Perf^\ad_\ell(\Gamma(Z))$.
\end{defi}

\begin{defi}
Let $\cO \colon \dAfdl^{\op} \to \CAlg_{\Q_\ell}$ denote the sheaf on $(\Afdl, \tau_\et)$ given on objects by the formula
	\[
		Z \in \dAfdl^{\op} \mapsto  \Gamma \left( Z \right) \in \CAlg_{\Q_\ell}.
	\]
\end{defi}

\begin{construction} \label{const:pair}
One is able to define a \emph{pre-orientation}, in the sense of \cite[Definition 3.3]{toen_ss}, on the $\rigCat$-value stack $\Perf^\ad_\ell(X)$
	\[
		\theta \colon H \left( \Perf^\ad_{\ell}(X)\right) \to \cO[-2d],
	\]
as follows: let $Z \in \dAfdl$ be a derived $\Q_\ell$-affinoid space. We have a canonical equivalence in the \infcat $\Mod_{\Gamma(Z)}$
	\begin{equation} \label{eq:ii}
		 \beta_{\Gamma(Z)} \colon \Map_{\Perf^\ad_\ell(X)(\Gamma(Z))} \left( 1, 1 \right) \simeq C^*_{\text{\et}} \left( X, \Gamma(Z) \right),
	\end{equation}
by the very construction of $\Perf^\ad_\ell \big( \Gamma(Z) \big)$. Moreover, the projection formula for \'etale cohomology produces a canonical equivalence
	\[
		C^*_{\text{et}} \left( X, \Gamma(Z) \right) \simeq C^*_{\text{et}} \left( X, \Q_\ell \right) \otimes_{\Q_\ell} \Gamma(Z)
	\]
in the \infcat $\Mod_{\Q_\ell}$. As $X$ is a connected smooth scheme of dimension $d$ over an algebraically closed field we have a canonical map on cohomology groups
	\[
		\alpha \colon \Q_\ell \simeq H^0 \left( X_{\et}, \Q_\ell \right) \otimes H^{2d} \left( X_{\et}, \Q_\ell \right) \to \Q_\ell 
	\]
which is induced by Poincar\'e duality. Consequently, the morphism $\alpha$ induces, up to contractible indeterminacy, a canonical morphism
	\begin{equation} \label{eq:iii}
		C^*_{\et}(X, \Q_\ell) \to \Q_\ell [-2d].
	\end{equation}
in the \infcat $\Mod_{\Ql}$.
\eqref{eq:ii} together with base change of \eqref{eq:iii} along the morphism $\Q_\ell \to \Gamma(Z) $ provides us with a natural morphism
	\[
	 	\Map_{\Perf^\ad_\ell(X) (\Gamma(Z)) } \left( 1, 1 \right) \to \Gamma(Z) [-2d].
	\]
By naturality of the previous constructions, we obtain a morphism pre-orientation
	\[
		\theta \colon H \left( \Perf^\ad_\ell(X) \right) \to \cO[-2d],
	\]
which corresponds to the desired orientation.

Given $Z \in \dAfdl$, the \infcat $\Perf^\ad_\ell \big( \Gamma(Z) \big)$ is rigid. Thus for a given object $\rho \in \Perf^\ad_\ell \big( \Gamma(Z) \big)$ we have a canonical trace map
	\[
		\mathrm{tr}_{\rho} \colon \Ad \left( \rho \right) \to 1_{		} .
	\]
which together with the symmetric monoidal structure provide us with a composite of the form
	\begin{align}
		 \Map_{\Perf^\ad_\ell(X)(\Gamma(Z))} \left( 1 , \Ad(\rho) \right) \otimes \Map_{\Perf^\ad_\ell(X)(\Gamma(Z))} \left(  1, \Ad( \rho) \right) \to & \Map_{\Perf^\ad_\ell(X)(\Gamma(Z))} \left( 1 , \Ad(\rho)  \otimes \Ad (\rho) 	
		 \right)  \\ 
		\to \Map_{\Perf^\ad_\ell(X)(\Gamma(Z))} \left( 1, \Ad(\rho) \right)	 \to & \Map_{\Perf^\ad_\ell(X)(\Gamma(Z)} \left( 1, 1 \right) \to  \Gamma(Z)[-2d] &	
	\end{align}
which we can right equivalently as a morphism
	\[
		C^*_{\et} \left( X, \Ad(\rho) \right) \otimes C^*_{\et} \left( X, \Ad(\rho) \right) \to \Gamma(Z)[2-2d],
	\]
which by our construction coincides with the base change along $\Q_\ell \to \Gamma(Z)$ of the usual \emph{pairing} given by \emph{Poincar\'e Duality}.
\end{construction}

\begin{lemma}
Let $Z \in \dAfdl$ be a derived $\Ql$-affinoid space. The pairing of \cref{const:pair}
	\[
		\Map_{\Perf^\ad_\ell(X)(\Gamma(Z)} \left( 1, \Ad (\rho) \right) \otimes \Map_{\Perf^\ad_\ell(X)(\Gamma(Z))} \left( 1, \Ad(\rho) \right) \to \Gamma(Z) [-2d]
	\]
is non-degenerate. In particular, the pre-orientation $\theta \colon H \left( \Perf^\ad_\ell(X) \right) \to \cO[-2d]$ is an orientation, see \cite[Definition 3.4]{toen_ss} for the latter notion.
\end{lemma}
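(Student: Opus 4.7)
The plan is to identify the categorically-defined pairing with the classical Poincar\'e duality pairing with coefficients in the adjoint $\ell$-adic sheaf $\Ad(\rho)$, and then invoke the latter's non-degeneracy on a smooth proper variety. Under the equivalence $\beta_{\Gamma(Z)}$ of \eqref{eq:ii} together with the projection formula, the composite built in the preceding construction identifies with
\[
C^*_\et(X, \Ad(\rho)) \otimes_{\Gamma(Z)} C^*_\et(X, \Ad(\rho)) \longrightarrow C^*_\et(X, \Gamma(Z))[2] \longrightarrow \Gamma(Z)[2-2d],
\]
where the first arrow comes from the lax symmetric monoidal structure on \'etale cohomology composed with the map $\Ad(\rho)\otimes\Ad(\rho)\xrightarrow{\mathrm{mult}}\Ad(\rho)\xrightarrow{\tr_\rho} 1$, and the second is the $\Gamma(Z)$-linear extension of the fundamental class $C^*_\et(X,\Q_\ell)\to\Q_\ell[-2d]$.

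Next I would observe that $\Ad(\rho)=\rho\otimes\rho^\vee$ is canonically self-dual: the composite $\Ad(\rho)\otimes\Ad(\rho)\to\Ad(\rho)\to 1$ is precisely the standard trace pairing on $\End(\rho)$, and therefore corresponds, under the duality in $\Perf^\ad_\ell(X)(\Gamma(Z))$, to an equivalence $\Ad(\rho)\simeq\Ad(\rho)^\vee$. Through this equivalence, the pairing takes the familiar shape
\[
C^*_\et(X, \Ad(\rho)) \otimes_{\Gamma(Z)} C^*_\et(X, \Ad(\rho)^\vee) \longrightarrow \Gamma(Z)[2-2d],
\]
i.e., the cup product composed with the Poincar\'e trace with coefficients in the lisse adjoint sheaf. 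Non-degeneracy then reduces to the classical $\ell$-adic Poincar\'e duality theorem applied to $\cF=\Ad(\rho)$: for any dualizable lisse $\ell$-adic sheaf on a smooth proper $X/k$ with $p\neq\ell$, the pairing $C^*_\et(X,\cF)\otimes C^*_\et(X,\cF^\vee)\to\Q_\ell[-2d]$ is perfect (see e.g.\ \cite[Ch.~VI, Thm.~11.1]{milne_et}). Both sides being perfect $\Q_\ell$-modules by finiteness of $\ell$-adic cohomology on smooth proper varieties, base change along $\Q_\ell\to\Gamma(Z)$ preserves the resulting equivalence, yielding the required non-degeneracy.

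The main obstacle is the first step, namely matching the abstract categorical pairing --- built from duality data in the rigid symmetric monoidal $\infty$-category $\Perf^\ad_\ell(X)(\Gamma(Z))$ together with the pre-orientation $\theta$ --- with the classical Poincar\'e duality pairing with adjoint coefficients. The natural strategy is to exploit functoriality in $\rho$ and the symmetric monoidality of the cup product to reduce to the case $\rho=1$, in which both pairings collapse tautologically to the fundamental class $H^{2d}_\et(X,\Q_\ell)\to\Q_\ell$. Once this compatibility is established, the non-degeneracy of $\theta$ (and hence the fact that it is an orientation in the sense of \cite[Definition 3.4]{toen_ss}) follows formally.
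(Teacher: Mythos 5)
Your overall strategy --- identify the categorical pairing of \cref{const:pair} with the Poincar\'e duality pairing with $\Ad(\rho)$-coefficients, using the self-duality of $\Ad(\rho)$ via the trace form, and then invoke non-degeneracy of the latter --- is the right first move, and the identification step is essentially already packaged into the construction of the pre-orientation. The gap is in your final step. You treat $\Ad(\rho)$ as if it were obtained by extension of scalars along $\Q_\ell \to \Gamma(Z)$ from a lisse $\Q_\ell$-sheaf, so that non-degeneracy would follow from classical duality over $\Q_\ell$ plus base change. But $\rho$ takes values in $\GLn(\Gamma(Z))$ for an arbitrary derived $\Q_\ell$-affinoid algebra and is only continuous for the non-archimedean topology on $\Gamma(Z)$; it is not in general the base change of a representation defined over $\Q_\ell$ or a finite extension thereof. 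The reference you cite proves duality for finite torsion coefficients, and extending it to continuous representations with affinoid coefficients is precisely the content that has to be supplied here --- it cannot be obtained by tensoring up a perfect pairing of $\Q_\ell$-modules.

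The paper closes exactly this gap by a d\'evissage to torsion coefficients. Using \cref{lem:imp}, it realizes $\rho$ as the $\rmB\Gamma$-fixed points of a tame representation $\widetilde{\rho}$ of $\Sh^\tame(Y)$ for a suitable finite \'etale cover $Y \to X$, valued in a formal model $A_0 \in \CAlg^{\ad}_{\bZ_\ell}$ of $\Gamma(Z)$ (this is where bounded ramification is used); it then reduces non-degeneracy to the residual representation over $A_0/\ell$, writes $A_0/\ell$ as a filtered colimit of polynomial $\bF_\ell$-algebras and uses cohomological compactness of $\Sh^\tame(Y)$ to commute cohomology with that colimit, and finally invokes Lazard's theorem to reduce to coefficients in a finite $\bF_\ell$-module, where Poincar\'e duality in its classical form applies. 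Some reduction of this kind is indispensable: as written, your argument establishes non-degeneracy only for representations defined over a finite extension of $\Q_\ell$.
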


\begin{proof}
Let $\rho \in \PerfSys(X) ( \cO_Z)$ be an arbitrary continuous representation with $\cO_Z$-coefficients. We wish to prove that the natural mapping
	\[
		\Map_{\Perf^\ad_\ell(X)(\Gamma(Z)} \left( 1, \Ad (\rho) \right) \otimes \Map_{\Perf^\ad_\ell(X)(\Gamma(Z))} \left( 1, \Ad(\rho) \right) \to \Gamma(Z) [-2d]
	\]
is non-degenerate. As $Z $ lives over $\Ql$ and $p \neq \ell$ it follows that $\rho \in \mathbf{PerfSys}_{\ell, \Gamma}(X)$
for a sufficiently large finite quotient $q \colon \wildpi(X) \to \Gamma$. It then follows by \cite[Proposition 4.3.19]{me1} together with \cref{lem:imp} that
$\rho$ can be realized as the $\rmB \Gamma$-fixed points of a given $\widetilde{\rho} \colon \Sh^\tame(Y) \to \rmB \GLn(A_0)$, where $Y \to X$ is a suitable \'etale covering and  $A_0 \in \adCAlg$ is an admissible derived $\bZ_\ell$-algebra such that
	\[
		\big( \Spf A_0 \big)^\rig \simeq Z, 
	\]
in the \infcat $\dAfdl$. We notice that it suffices then to show the statement for the residual representation $\rho_0 \colon \Sh^\tame (Y) \to \rmB \GLn(A_0 / \ell)$, where $A_0 / \ell$ denotes the pushout
	\[
	\begin{tikzcd}
		A_0[t] \ar{r}{t \mapsto \ell}  \ar{d}{t \mapsto 0} & A_0 \ar{d} \\
		A_0 \ar{r} & A_0 / \ell
	\end{tikzcd}
	\]
computed in the \infcat $\adCAlg$. We can write $A_0 / \ell$ as a filtered colimit of free $\bF_\ell$-algebras $\bF_\ell[T_0, \dots, T_m]$, where the $T_i $ sit in homological degree $0$. As $\Sh^\tame(Y)$ is cohomological compact we reduce
ourselves to prove the statement by replacing $\rho_0$ with a continuous representation with values in some polynomial algebra $\bF_\ell[T_0, \dots, T_m]$. The latter is a flat module over $\bF_\ell$. Therefore, thanks to Lazard's theorem
\cite[Theorem 8.2.2.15]{lurieHA} we can further reduce ourselves to the case where $\rho_0$ is valued in a finite $\bF_\ell$-module. The result now follows by the \cref{const:pair} together with the projection formula for \'etale cohomology
and Poincar\'e duality for \'etale cohomology.
\end{proof}

As a corollary of \cite[Theorem 3.7]{toen_ss} one obtains the following important result:

\begin{theorem}
The derived moduli stack $\PerfSys(X) \in \dSt \left( \dAfdl, \tau_{\et} \right)$ admits a canonical shifted symplectic structure $\omega \in \rmH \rmC \big(\PerfSys(X) \big)$, where the latter denotes cyclic homology of the derived moduli stack
$\PerfSys(X)$. Moreover, given $Z \in \dAfdl$ and $\rho \in \PerfSys \big( \Gamma(Z) \big)$, the shifted symplectic structure $\omega$ on $\PerfSys(X)$
is induced by \emph{\'etale Poincar\'e duality}
	\[
		C^*_{\emph{\et}}\left(X, \Ad(\rho) \right) [1] \otimes C^*_{\emph{\et}} \left( X, \Ad(\rho) \right) [1] \to \Gamma(Z)[2-2d].
	\]
\end{theorem}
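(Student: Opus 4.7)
The plan is to apply \cite[Theorem 3.7]{toen_ss} directly to the $\rigCat$-valued moduli stack $\Perf^\ad_\ell(X)$ from Lemma~\ref{lem:rigidity}, equipped with the orientation $\theta \colon H(\Perf^\ad_\ell(X)) \to \cO[-2d]$ whose non-degeneracy was established in the preceding lemma. That theorem asserts that a rigid symmetric monoidal stack endowed with a $(-2d)$-shifted orientation determines a canonical $(2-2d)$-shifted symplectic form on its underlying derived moduli stack of objects. Since Lemma~\ref{lem:rigidity} identifies $(-)^\simeq \circ \Perf^\ad_\ell(X)$ with $\PerfSys(X)$, this immediately yields a class $\omega \in \rmH\rmC(\PerfSys(X))$ of the expected degree.

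The second step is to verify that, at a point $\rho \in \PerfSys(X)(\Gamma(Z))$, the induced pairing on the tangent complex coincides with étale Poincaré duality. By the same reasoning as in Proposition~\ref{tang_comp}, together with the equivalence \eqref{eq:ii}, the tangent complex at $\rho$ is identified with $\Map_{\Perf^\ad_\ell(X)(\Gamma(Z))}(1, \Ad(\rho))[1] \simeq C^*_\et(X, \Ad(\rho))[1]$. The construction of \cite[Theorem 3.7]{toen_ss} produces the underlying bilinear form of $\omega$ at $\rho$ as the composite of the symmetric monoidal multiplication $\Ad(\rho) \otimes \Ad(\rho) \to \Ad(\rho)$ with the trace $\tr_\rho \colon \Ad(\rho) \to 1$, post-composed with the orientation $\theta$ applied to the resulting class in $H(\Perf^\ad_\ell(X))$. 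By the very definition of $\theta$ in Construction~\ref{const:pair}, this is exactly the composite \eqref{PDet}, which by construction is the pairing induced by classical Poincaré duality on $X$ together with the projection formula for étale cohomology.

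The main obstacle is that \cite[Theorem 3.7]{toen_ss} is stated in the framework of derived algebraic geometry, whereas $\PerfSys(X)$ lives in the $\Ql$-analytic world $\dSt(\dAfdl, \tau_\et)$. To transpose the argument one needs an analytic counterpart of the machinery of closed forms and of the de Rham complex on derived stacks; the crucial input is the $\Ql$-analytic HKR theorem of Theorem~\ref{analytic_HKR}, which allows one to interpret the orientation as a genuine closed $2$-form rather than merely a bilinear pairing on the tangent complex. Granting this analytic framework, the proof of \cite[Theorem 3.7]{toen_ss} transposes mutatis mutandis, the essential ingredients being only the existence of an analytic cotangent complex on $\PerfSys(X)$ (which follows from the same cohomological perfectness and compactness considerations used for $\abdLocSys(X)$ in the previous section), the rigidity of $\Perf^\ad_\ell(X)$, and the orientation $\theta$. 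Assembling these yields both the existence of $\omega$ and the explicit identification of its underlying pairing with étale Poincaré duality.
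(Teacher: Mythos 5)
Your proposal is correct and follows exactly the route the paper takes: the paper's proof is a one-line appeal to the preceding discussion (the rigid $\rigCat$-valued stack $\Perf^\ad_\ell(X)$, the orientation $\theta$ of \cref{const:pair}, and the non-degeneracy lemma) combined with the argument of \cite[Theorem 3.7]{toen_ss}. You merely spell out the steps in more detail, including the honest caveat that To\"en's theorem must be transposed to the $\Ql$-analytic setting via the analytic HKR theorem, a point the paper leaves implicit.
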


\begin{proof}
This is a direct consequence of our previous discussion together with the argument used in \cite[Theorem 3.7]{toen_ss}.
\end{proof}

\subsection{Applications} Consider the canonical inclusion $\iota \colon \dLocSys(X) \hookrightarrow \PerfSys(X)$. Pullback along the morphism $\iota$ on cyclic homology induces a well defined, up to contractible indeterminacy, morphism
	\[
		\iota^* \colon \rmH \rmC \big( \PerfSys(X) \big) \to \rmH \rmC \big( \dLocSys(X) \big).
	\]
We then obtain a canonical closed form $\iota^* (\omega ) \in \rmH \rmC \big( \dLocSys(X) \big)$. Moreover, as $\iota$ induces an equivalence on tangent complexes, the closed form $\iota^* (\omega ) \in \rmH \rmC \big( \dLocSys(X) \big)$ is non-degenerate,
thus a $2-2d$-shifted symplectic form. Similarly, given a finite quotient $q \colon \wildpi(X) \to \Gamma$, we obtain a $2-2d$-shifted symplectic form on the derived $\Ql$-analytic stack $\abdLocSys(X)$. The existence of the sifted symplectic form entails
the following interesting result:

\begin{defi}
Let $\bL_{\dLocSys(X)}$ denote the cotangent complex of the derived moduli stack $\dLocSys(X)$. We will denote by 
	\[
		C^*_{\dR} \big( \dLocSys(X) \big) \coloneqq \Sym^* \big( \bL_{\dLocSys(X)} \big) \in \Coh \big( \dLocSys(X) \big)
	\]
\end{defi}

\begin{rema}
Notice that $C^*_{\dR} \big( \dLocSys(X) \big)$ admits, by construction, a natural mixed algebra structure. However, we will be mainly interested in the corresponding ''plain module'' and $\bE_\infty$-algebra structures
underlying the given mixed algebra structure on $C^*_{\dR} \big( \dLocSys(X) \big)$.
\end{rema}

\begin{prop}
Let $X$ be a proper and smooth scheme over an algebraically closed field of positive characteristic $p> 0$. We then have a well defined canonical morphism
	\[
		C^*_\dR \big( \rmB \anGLn \big) \otimes C^*_\emphet \big(X, \Ql \big)^\vee \to  C^*_\dR \big( \dLocSys(X) \big)
	\]
\end{prop}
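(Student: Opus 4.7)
The strategy is to exploit the universal continuous representation on $\dLocSys(X)$ together with functoriality of the derived de Rham complex. By the very construction of $\dLocSys(X)$ as a moduli of continuous representations of $\Sh^\et(X)$, there is a tautological \emph{universal representation}
\[
u \colon \Sh^\et(X) \times \dLocSys(X) \longrightarrow \rmB \anGLn,
\]
corresponding, under the continuous mapping-stack interpretation of \cref{const:mod}, to the identity on $\dLocSys(X)$. My plan is to pull back $C^*_\dR(\rmB \anGLn)$ along $u$, decompose the resulting complex into a Künneth-type tensor product, and then dualize the étale-cohomology factor using properness of $X$.

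The first step is to make rigorous sense of $C^*_\dR$ on the mixed product $\Sh^\et(X) \times \dLocSys(X)$, where the first factor is to be viewed as a constant pro-étale object. Since $\Sh^\et(X) \in \pro(\cS^{\fc})$ carries no infinitesimal deformations, the ``functions'' on the associated constant stack should be computed by $\ell$-adic étale cohomology of $X$, and one expects a Künneth-type decomposition
\[
C^*_\dR\big(\Sh^\et(X) \times \dLocSys(X)\big) \simeq C^*_\et(X, \Ql) \otimes_{\Ql} C^*_\dR\big(\dLocSys(X)\big).
\]
A convenient way to establish this is to descend along the smooth atlas of $\dLocSys(X)$ provided by the derived framed moduli stack and reduce to a computation on derived affinoid spaces, where the universal representation can be made explicit and the Künneth formula matches the usual derived projection formula for continuous $\ell$-adic cohomology.

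With this decomposition in hand, functoriality of $C^*_\dR$ under $u$ produces
\[
u^* \colon C^*_\dR(\rmB \anGLn) \longrightarrow C^*_\et(X, \Ql) \otimes_{\Ql} C^*_\dR\big(\dLocSys(X)\big).
\]
Properness and smoothness of $X$ together with the assumption $\ell \neq p$ imply, via the finiteness results of \cite[Theorem 19.1]{milne_et}, that $C^*_\et(X, \Ql)$ is a perfect $\Ql$-module, in particular dualizable. The tensor--Hom adjunction then transforms $u^*$ into the desired morphism
\[
C^*_\dR(\rmB \anGLn) \otimes_{\Ql} C^*_\et(X, \Ql)^\vee \longrightarrow C^*_\dR\big(\dLocSys(X)\big).
\]

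The principal technical obstacle is the first step: rigorously producing a well-defined de Rham complex on the mixed product $\Sh^\et(X) \times \dLocSys(X)$ and establishing the Künneth-type decomposition, since this object does not literally live in $\dAnl$. Once the continuous mapping-stack formalism used throughout \S 3 is fully deployed at the level of de Rham complexes, the construction of $u$ and the pullback are essentially tautological, the dualizability step is immediate from properness, and the final adjunction is purely categorical.
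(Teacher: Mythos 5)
Your overall strategy---evaluate the universal object, decompose by a K\"unneth formula, and dualize the \'etale factor using properness---is morally parallel to the paper's, but the paper implements it through machinery that your proposal replaces by an assertion, and that assertion is exactly where the difficulty lives. The paper never forms the mixed product $\Sh^{\et}(X) \times \dLocSys(X)$. Instead it uses the forgetful morphism of categorified stacks $\PerfSys(X) \to \Perf^{\an}$ and the formalism of \cite{toen_ss} to produce a map
\[
\rmH\rmC\big(\Perf^{\an}\big) \longrightarrow \rmH\rmC\big(\PerfSys(X)\big) \otimes \rmH\big(\PerfSys(X)\big),
\]
identifies $\rmH\big(\PerfSys(X)\big) \simeq C^*_{\et}(X,\Ql)$ (this identification, coming from the enriched functor-category description of $\PerfSys(X)$, is what plays the role of your K\"unneth factor), dualizes using perfectness of $C^*_{\et}(X,\Ql)$, then descends from $\PerfSys(X)$ to $\dLocSys(X)$ by a support argument on loop stacks for the square relating $\dLocSys(X) \to \rmB\anGLn$ to $\PerfSys(X) \to \Perf^{\an}$, and only at the very end converts Hochschild homology into $C^*_{\dR}$ via the analytic HKR theorem (\cref{analytic_HKR}).

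The genuine gap in your write-up is therefore the first step, which you flag but then dismiss as essentially tautological once the formalism is deployed: the existence of $C^*_{\dR}\big(\Sh^{\et}(X) \times \dLocSys(X)\big)$ and its K\"unneth decomposition are not consequences of anything established in \S 3, because $\Sh^{\et}(X)$ is a pro-(profinite space), the product is not an object of $\dSt\big(\dAfdl,\tau_{\et}\big)$, and $C^*_{\dR}$ is only defined as $\Sym$ of the cotangent complex of such an object. Reducing to the framed atlas does not help, since the problematic factor is $\Sh^{\et}(X)$, not $\dLocSys(X)$. To make your argument work you would either have to develop a de Rham (or Hochschild) theory for these continuous mapping objects---which is precisely what the combination of \cite{toen_ss} and the analytic HKR theorem supplies in the paper's proof---or reroute through $\rmH\rmH$ of the categorified stacks as the paper does. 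The remaining steps of your proposal (dualizability of $C^*_{\et}(X,\Ql)$ from properness, smoothness and $\ell \neq p$, and the final tensor--Hom adjunction) are correct and agree with the corresponding steps in the paper.
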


\begin{proof}
Let $\rho \in \PerfSys(X)$ be a continuous representation. We have a canonical morphism
	\[
		\rmB \End(\rho) \to \rmB \End \big( \rho(*) \big)
	\]
in the \infcat $ \cS$, where $\rho(*)$ denotes the module underlying $\rho$. This association induces a well defined, up to contractible indeterminacy, morphism
	\[
		\PerfSys(X) \to \Perf^\an,
	\]
where $\Perf^\an \in \dSt \big( \dAfdl, \tau_\et \big)$ denotes the analytification of the algebraic stack of perfect complexes, $\Perf$. Therefore, we obtain a canonical morphism
	\begin{equation} \label{map:imp}
	 	f^* \colon \rmH \rmC \big( \Perf^\an \big) \otimes \rmH \big( \Perf^\an \big) \to \rmH \rmC \big( \PerfSys(X) \big) \otimes \rmH \big( \PerfSys(X) \big)
	\end{equation}
in the \infcat $\Mod_{\Ql}$, where $\rmH \big( \Perf^\an \big) \coloneqq \Map_{\Perf(\Ql)} \big( \Ql , \Ql \big) \simeq \Ql$ and $  \rmH \big( \PerfSys(X) \big) \simeq C^*_\et(X, \Ql)$. Thus we can rewrite \eqref{map:imp} simply as
	\begin{equation} \label{imp:map2}
		f^* \colon	\rmH \rmC \big( \Perf^\an \big) \to \rmH \rmC \big( \PerfSys(X) \big) \otimes C^*_\et(X, \Ql).
	\end{equation}
As \'etale cohomology $C^*_\et(X, \Ql) \in \Mod_{\Ql}$ is a perfect module we can dualize \eqref{imp:map2} to obtain a canonical morphism
	\[
		f^*  \colon \rmH \rmC \big( \Perf^\an \big) \otimes C^*_\et(X, \Ql ) \to  \rmH \rmC \big( \PerfSys(X) \big).
	\]
in the \infcat $\Mod_{\Ql}$. Consider now the commutative diagram
	\[
	\begin{tikzcd}
		\dLocSys(X) \ar{r} \ar{d}{j} & \rmB \anGLn \ar{d} \\
		\PerfSys(X) \ar{r} & \Perf^\an
	\end{tikzcd}
	\]
in the \infcat $\dSt \big( \dAfdl, \tau_\et \big)$. Then we have a commutative diagram at the level of loop stacks
	\[
	\begin{tikzcd}
		\Map \left( S^1, 	\dLocSys(X) \right) \ar{r}{i} \ar{d}{j} & \Map \left( S^1, \rmB \anGLn \right) \ar{d} \\
		\Map \left( S^1, \PerfSys(X) \right) \ar{r} & \Map \left( S^1, \Perf^\an \right).
	\end{tikzcd}
	\]
By taking global sections in the above diagram we conclude that the composite 
	\[
		f^* \circ i_!   \cH \cH \big( \cO_{ \rmB \anGLn} \big) \simeq f^* \circ i_! \cO_{\Map \left( S^1, 	\rmB \anGLn \right)}
	\]
has support in $\Map \left( S^1, 	\dLocSys(X) \right) \hookrightarrow \Map \left( S^1, 	\PerfSys(X) \right)$. Therefore, we can factor the composite
	\[
		\rmH \rmH \big( \rmB \anGLn \big)  \otimes C^*_\et(X, \Ql)^\vee \to \rmH \rmH \big( \Perf^\an \big)  \otimes C^*_\et(X, \Ql)^\vee \to \rmH \rmH \big( \PerfSys(X) \big)
	\]
as a morphism 
	\[
		\rmH \rmH \big( \rmB \anGLn \big)  \otimes C^*_\et(X, \Ql)^\vee \to  \rmH \rmH \big( \dLocSys(X) \big)
	\]
in the \infcat $\Mod_{\Ql}$. The analytic HKR theorem then provide us with the desired morphism
	\[
		C^*_\dR \big( \rmB \anGLn \big) \otimes C^*_\et \big(X, \Ql \big)^\vee \to  C^*_\dR \big( \dLocSys(X) \big)
	\]
in the \infcat $\Mod_{\Ql}$.
\end{proof}

\begin{rema}
A type GAGA theorem for reductive groups together with a theorem of B. Totaro, see \cite[Theorem 10.2]{totaro}, that the de Rham cohomology of the classifying stack $\anGLn$ coincides with $\ell$-adic cohomology
	\[
		C^*_\dR \big( \rmB \anGLn \big)  \simeq C^*_\dR \big( \rmB \GLn^{\mathrm{top}} \big)
	\]
in the \infcat $\Mod_{\Ql}$, where $\rmB \GLn^{\mathrm{top}}$ denotes the topological classifying stack associated to the general linear group $\GLn$. In particular, we obtain a morphism
	\[
		C^*_\et \big(\rmB \GLn, \Ql \big) \otimes C^*_{\et} \big(X, \Ql \big) \to C^*_\dR \big( \dLocSys(X) \big).
	\]
in the \infcat $\Mod_{\Ql}$. As $C^*_\dR \big( \dLocSys(X) \big)$ admits a natural $\bE_\infty$-algebra structure we obtain, by the universal property of the Sym construction, a well defined morphism
	\begin{equation} \label{mor:sp}
		\Sym \left( C^*_\et \big(\rmB \GLn, \Ql \big) \otimes C^*_{\et} \big(X, \Ql \big) \right) \to C^*_\dR \big( \dLocSys(X) \big).
	\end{equation}
in the \infcat $\CAlg_{\Ql}$. Assuming further that $X$ is a proper and smooth curve over an algebraically closed field, an $\ell$-adic version of Atiyah-Bott theorem proved in \cite{tamagawa}
implies that we can identify the left hand side of \eqref{mor:sp} with a morphism
	\[
		C^*_\et \left( \mathrm{Bun}_{\GLn} (X), \Ql \right) \to C^*_\dR \big( \dLocSys(X) \big)
	\]
in the \infcat $\CAlg_{\Ql}$.
\end{rema}

As a corollary we obtain:

\begin{coro}
Let $X$ be a smooth scheme over an algebraically closed field of positive characteristic $p > 0$. We have a canonical morphism
	\[
		\varphi \colon C^*_{\et} \big( \rmB \GLn, \Ql \big) \otimes C^*_\et \big( X, \Ql \big)^\vee \to C^*_{\dR} \big( \dLocSys(X) \big)
	\]
in the \infcat $\CAlg_{\Ql}$. Moreover, assuming further that $X$ is also a proper curve we obtain a canonical morphism
	\[
		C^*_\emphet \left( \mathrm{Bun}_{\GLn} \big(X \big), \Ql \right) \to  C^*_{\dR} \big( \dLocSys(X) \big)
	\]
in the \infcat $\CAlg_{\Ql}$.
\end{coro}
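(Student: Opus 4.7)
The plan is to read off the corollary directly from the preceding proposition combined with the identifications made in the remark immediately before the statement. The proposition already provides a canonical morphism
\[
f \colon C^*_\dR \big( \rmB \anGLn \big) \otimes C^*_\et \big( X, \Ql \big)^\vee \to C^*_\dR \big( \dLocSys(X) \big)
\]
in $\Mod_{\Ql}$, obtained by combining the shifted symplectic form on $\PerfSys(X)$, the composite along $\dLocSys(X) \hookrightarrow \PerfSys(X) \to \Perf^\an$ at the level of loop stacks, and the analytic HKR theorem. My job is therefore to upgrade the target of $f$ and to promote $f$ to a morphism in $\CAlg_{\Ql}$.

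First, I would invoke Totaro's computation to replace $C^*_\dR \big( \rmB \anGLn \big)$ by $C^*_\et \big( \rmB \GLn, \Ql \big)$; this requires a GAGA-type statement for reductive groups to pass between the analytic classifying stack $\rmB \anGLn$ and its algebraic counterpart, after which Totaro's theorem gives the agreement of de Rham and $\ell$-adic cohomologies. This yields the first claimed morphism as a morphism of $\Ql$-modules. To promote it to a morphism in $\CAlg_{\Ql}$, I would use that $C^*_\dR \big( \dLocSys(X) \big) \simeq \Sym^*(\bL_{\dLocSys(X)})$ carries a canonical $\bE_\infty$-algebra structure, so that $f$ extends via the universal property of Sym to
\[
\Sym \left( C^*_\et \big( \rmB \GLn, \Ql \big) \otimes C^*_\et \big( X, \Ql \big)^\vee \right) \to C^*_\dR \big( \dLocSys(X) \big).
\]
Since Totaro's theorem further tells us that $C^*_\et \big( \rmB \GLn, \Ql \big)$ is already freely generated as a graded $\Ql$-algebra by its module of primitives, a mild bookkeeping step identifies $\Sym$ applied to a shift of the primitives with $C^*_\et \big( \rmB \GLn, \Ql \big) \otimes C^*_\et (X, \Ql)^\vee$ viewed in $\CAlg_{\Ql}$, giving the first assertion.

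For the curve case, assuming $X$ proper and smooth of dimension one, I would appeal to the $\ell$-adic Atiyah--Bott formula of Tamagawa cited in the preceding remark. This provides an isomorphism in $\CAlg_{\Ql}$
\[
C^*_\et \big( \mathrm{Bun}_{\GLn}(X), \Ql \big) \simeq \Sym \left( C^*_\et \big( \rmB \GLn, \Ql \big) \otimes C^*_\et (X, \Ql)^\vee \right),
\]
after which the desired second morphism is obtained by composing with the $\bE_\infty$-algebra map constructed above. The only non-formal ingredients are thus Totaro's theorem (used for the first statement) and Tamagawa's $\ell$-adic Atiyah--Bott theorem (used for the curve specialization); both are invoked as black boxes.

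The main obstacle in this plan, and the only step that requires care beyond invoking known theorems, is ensuring that the morphism $f$ produced in the proposition, which was constructed at the level of $\Mod_{\Ql}$ from the cyclic homology of $\Perf^\an$ and the support property of $\dLocSys(X) \hookrightarrow \PerfSys(X)$, is genuinely compatible with the $\bE_\infty$-algebra structures coming from the Sym description of de Rham cohomology. Concretely, one must verify that the analytic HKR equivalence used in the proposition intertwines the multiplicative structure on $\cH\cH$-groups with the Sym-algebra structure on $C^*_\dR$; this is essentially the multiplicative refinement of HKR and should follow from the compatibility statement that will be established in the joint work-in-progress with Petit and Porta announced in \cref{analytic_HKR}.
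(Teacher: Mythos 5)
Your argument is essentially identical to the paper's: the corollary is stated there without a separate proof precisely because it is read off from the preceding proposition together with the remark invoking GAGA for reductive groups, Totaro's theorem, the universal property of $\Sym$ applied to the $\bE_\infty$-structure on $C^*_{\dR}\big(\dLocSys(X)\big)$, and Tamagawa's $\ell$-adic Atiyah--Bott identification in the proper curve case. Your additional flag about the multiplicative compatibility of the analytic HKR equivalence is a fair observation of a point the paper also leaves to the announced work with Petit and Porta.
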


\begin{rema}
By forgetting the mixed $k$-algebra structure on $C^*_{\dR} \big( \dLocSys(X) \big)$ one can prove that the moprhism $\varphi$ sends the product of the canonical classes on $ C^*_{\et} \big( \rmB \GLn, \Ql \big) 
\otimes C^*_\et(X, \Ql)^\vee$ to the underlying cohomology class
of the shifted symplectic form $\omega $ on $\dLocSys(X)$.
\end{rema}

\end{document}